\newtheorem{theorem}{Theorem}
\newtheorem{remark}{Remark}
\newtheorem{definition}{Definition}
\newtheorem{proposition}{Proposition}
\newtheorem{lemma}{Lemma}
\newtheorem{corollary}{Corollary}
\renewenvironment{proof}{\noindent{\bf Proof.}}{\hfill
  $\blacksquare$\par\noindent}
 \newcommand{\com}[1]{}
\newcommand{\bx}{\bm x}
\newcommand{\bY}{\bm Y}
\newcommand{\bX}{\bm X}
\newcommand{\bbeta}{\bm \beta}
\newcommand{\bSigma}{\bm \Sigma}
\newcommand{\bTheta}{\bm \Theta}
\newcommand{\mE}{\mathbb E}
\newcommand{\mP}{\mathbb P}
\newcommand{\bc}{\bm c}
\newcommand{\sign}{\operatorname{sign}}
\title{Directional FDR Control for Sub-Gaussian Sparse GLMs}
\def\timenow{\@tempcnta\time
  \@tempcntb\@tempcnta
  \divide\@tempcntb60
  \ifnum10>\@tempcntb0\fi\number\@tempcntb
  \multiply\@tempcntb60
  \advance\@tempcnta-\@tempcntb
  \ifnum10>\@tempcnta0\fi\number\@tempcnta}
\author{Chang Cui\thanks{School of Mathematical Sciences and Center for Statistical Sciences, Peking University, Beijing, China. Email: ca.chcui@pku.edu.cn. The authors are in alphabetical order contributed equally to this work.}, Jinzhu Jia \thanks{School of Public Health and Center for Statistical Science, Peking University, Beijing, China. Email: jzjia@pku.edu.cn}, Yijun Xiao \thanks{School of Mathematical Sciences, Peking University, Beijing, China. Email:xiaoyijun@pku.edu.cn}, Huiming Zhang \thanks{1.Department of Mathematics,
University of Macau, Taipa Macau, China; 2.UMacau Zhuhai Research Institute, Zhuhai, China. Email: huimingzhang@um.edu.mo}, }
\date{}
\begin{document}

\maketitle

\begin{abstract}
High-dimensional sparse generalized linear models (GLMs) have emerged in the setting that the number of samples and the dimension of variables are large, and even the dimension of variables grows faster than the number of samples. False discovery rate (FDR) control aims to identify some small number of statistically significantly nonzero results after getting the sparse penalized estimation of GLMs. Using the CLIME method for precision matrix estimations, we construct the debiased-Lasso estimator and prove the asymptotical normality by minimax-rate oracle inequalities for sparse GLMs. In practice, it is often needed to accurately judge each regression coefficient's positivity and negativity, which determines whether the predictor variable is positively or negatively related to the response variable conditionally on the rest variables. Using the debiased estimator, we establish multiple testing procedures. Under mild conditions, we show that the proposed debiased statistics can asymptotically control the directional (sign) FDR and directional false discovery variables at a pre-specified significance level. Moreover, it can be shown that our multiple testing procedure can approximately achieve a statistical power of 1. We also extend our methods to the two-sample problems and propose the two-sample test statistics. Under suitable conditions, we can asymptotically achieve directional FDR control and directional FDV control at the specified significance level for two-sample problems. Some numerical simulations have successfully verified the FDR control effects of our proposed testing procedures, which sometimes
outperforms the classical knockoff method.
\end{abstract}

\textbf{Keywords}: sub-Gaussian regression models; debiased Lasso estimator; multiple hypothesis testing; directional false discovery rate; two-sample test.\\


\section{Introduction}

For the past few years, in bioinformatics, signal processing, finance, and many other scientific fields, large-scale datasets with multiple variables have been often involved. When faced with such high-dimensional data, in regression problems, traditional statistical methods usually cannot perform well. This phenomenon is also known as the curse of dimensionality. Based on a priori assumption that the true model should be simple and only related to a small set of features, we can choose a better model through dimensionality reduction. A popular method is to penalize the original objective function by adding a penalty (or regularization term) to select effective variables. The groundbreaking work is Lasso \citep{tibshirani1996regression}, which adds the penalty of $\ell_1$-norm to the least-squares to achieve sparsity. Since Lasso was proposed, statisticians have developed various regularization methods based on different penalty and loss functions; see \cite{fan2020statistical} for a comprehensive review and detailed introductions. In many situations, the linear model is not enough to characterize the data. One of the influential generalizations is generalized linear models \citep[GLMs,][]{nelder1972generalized}, which allows for response variables with distributions other than a normal distribution and thus has a wider range of applications. GLMs have many important developments in statistical inference and computing in the past half-century. The theory for high-dimension GLMs estimation has been studied before, such as logistic regression \citep{bunea2008honest,huang2021weighted}, Poisson regression \citep{blazere2014oracle} and negative binomial regression \citep{zhang2022elastic}.

Meanwhile, the problem of statistical inference in high-dimensional models is also very critical since scientists want to select the most influential factors and want to know which predictors have non-zero influence and which ones do not. This is actually a question of multiple testing. For each covariate, we hope to determine whether it is important or not at the same time. To formulate this problem precisely, we need a criterion to assess the result. When the dimensionality is relatively low, a reasonable and commonly used criterion is the family-wise error rate (FWER). For high-dimensional data sets, for example, among millions of genes, scientists want to study which of them are related to a specific disease. In this case, the family-wise criterion would be somewhat conservative. In contrast, false discovery rate (FDR) proposed by \cite{BH95} is a criterion more suitable for the large-scale multiple hypothesis testing problem. After doing some penalized estimation for regressions, FDR control methods can typically be applied via p-value thresholds, such as BH procedure \citep{BH95}. BH procedure was proved to be able to control FDR in independent or positive dependent cases \citep{BH95,benjamini2001control}. Subsequent research \citep{liu2014phase} made a series of methodological and theoretical improvements. Afterwards, \cite{barber2015controlling} originally proposed the knockoff method to control FDR for linear models. This method constructs the knockoff variables as the original covariates' negative controls to identify the truly crucial variables. \cite{candes2018panning} and \cite{fan2020rank} further generalized it to high-dimensional nonlinear models. Another line for the inference of high-dimensional settings is to construct debiased estimators \citep{zhang2014confidence,javanmard2014confidence,van2014asymptotically,javanmard2019false,yuyi2018}. These methods start from the regularized estimator, use different techniques to construct the debiased estimator, and then draw statistical inferences on the asymptotic normality nature of it.

Our goal is to study the question that controls the directional FDR of estimators for high-dimensional GLMs. This statistical controlling problem was first studied in \cite{javanmard2019false} for linear regression models, which gave an debiased estimator for the high-dimensional linear regression model of the random design and proposed a procedure to control directional FDR based on the debiased estimator. \cite{xia2018joint} constructed the debiased estimator based on the inverse regression to handle the joint test of high-dimensional multiple regressions and control FDR. Our goal is to see whether the directional FDR can be obtained for the complex statistical model such as GLMs. In this paper, we concentrate mainly on the class of sparse estimators from GLM-like estimating equations. In future studies, our framework could be extended to more complex models such as survival models and Ising models with dependent responses.

Recently, \cite{ma2020global} used low dimensional projection to construct debiased estimators. They proposed the logistic multiple testing (LMT) procedure for large-scale multiple testing in high-dimensional logistic regression, which can asymptotically control FDR and the number of falsely discovered variables (FDV). However, due to bounded responses, \cite{ma2020global} did not provide methods for other link functions of GLMs. The main contributions of the present article are as below. First, by a new technique, sharper oracle inequalities for Lasso penalized GLM is first obtained, matching the optimal minimax lower bounds $O({s_0}\sqrt {\frac{{\log (p/{s_0})}}{n}} )$ in \cite{ye2010rate}(see the remark before Theorem \ref{lassobeta1}). Second, we propose the GLM multiple testing (GMT) procedures to control directional FDR, the number of directional FDV, and directional FWER using the debiased estimator for high-dimensional GLM. We utilize the constrained $\ell_1$-minimization for inverse matrix estimation \citep[CLIME; ][]{cai2011constrained} to calculate the debiased estimator for GLM under random design. We also give the test procedure for the two-sample testing problem. Third, for the required conditions, the asymptotic normality of the debiased estimator is proved under the sparsity parameter condition $s_0=o\left(\frac{n^{1/4}}{\log p}\right)$, which greatly relaxes the sparsity assumption in \cite{xia2020revisit}. And the assumptions for the covariance and precision matrix are more understandable than \cite{ma2020global} and \cite{yuyi2018}.

The rest of this paper is arranged as follows. Section 2 describes model settings and gives the debiased estimator under GLMs and in Section 2 we prove the  estimator's theoretical properties. Section 3 presents the method of controlling FDR for large-scale testing problem and two-sample problem based on the debiased estimator. Section 4 conducts numerical experiments on several methods under different settings which prove the effectiveness of our proposed method. The technical proofs are shown in Supplementary Materials.

\section{Notations, Model Settings and Debiased Lasso}

In this section, we first describe the notations and model settings and then we propose the debiased Lasso estimator and provide its asymptotic properties.

\subsection{Notations}
The notations we shall use in this article are described here. For any positive integer $n$, let $[n]=\{1,2, \ldots, n\} .$
For set $S$, $S^c$ represents its complement, and let $|S|$ denote its cardinality.
Let $\boldsymbol{I}_p$ denote the $p\times p$ identity matrix.
For $d$-dimensional vector $\bm{v}=\left(v_{1}, \ldots, v_{d}\right)^T  \in \mathbb{R}^{d}$, $\|\bm{v}\|_{q} = (\sum_{i=1}^{d}\left|v_{i}\right|^{q})^{1/q}$ represent its $\ell_{q}$ norm and $\|\bm{v}\|_{\infty}:=\max _{i \in [d]}\left|v_{i}\right|.$
Given set $S \subseteq \{1, \ldots, d \}, $ we write $ v_{S}: =
\left(v_{i}\right)_{i \in S} \in \mathbb{R}^{|S|}.$
For matrix $\bm{A}=\left(A_{ij}\right)_{i\in[n],j\in[p]} \in \mathbb{R}^{n\times p}$ and set $S \subseteq [p],$
let $\bm{A}_S\in \mathbb{R}^{n\times |S|}$ denote the matrix only consisting of the columns whose index is in $S$.
Let $\|\bm{A}\|_{\infty}:=\max _{i\in[n],j\in[p]}\left|A_{i j}\right|,$ and
$\|\bm{A}\|_{\mathrm{op}, \infty}:=\sup _{\bm{v} \neq 0}\left(\|\bm{A} \bm{v}\|_{\infty} / \|\bm{v}\|_{\infty}\right),
\quad \|\bm{A}\|_{\mathrm{op}, 1}:=\sup _{\bm{v} \neq 0}\left(\|\bm{A} \bm{v}\|_{1} / \|\bm{v}\|_{1}\right)$
respectively represent the operator norm $\ell_{\infty}$
and $\ell_{1}.$
For a smooth function $f(x)$ defined on $\mathbb{R}$, define $\dot{f}(x)=d f(x) / d x$ and $\ddot{f}(x)=d^{2} f(x) / d x^{2}$.
If $f: \mathbb{R}^{p} \rightarrow \mathbb{R}$ is differentiable, let $\nabla f$ denote the gradient of $f$.
For sequences $\left\{a_{n}\right\}$ and $\left\{b_{n}\right\}$, if $\lim_{n} a_{n} / b_{n}= 0$,
then we say $a_{n}=o\left(b_{n}\right).$ If there is a constant $C$ such that $a_{n} \leq C b_{n}$ holds for any $n$, then we say $a_{n}=O\left(b_{n}\right), \mbox{ written as } a_{n} \lesssim b_{n}$ or $b_{n} \gtrsim a_{n}.$ If $a_{n} \lesssim b_{n}$ and $a_{n} \gtrsim b_{n}$, then say $a_{n} \asymp b_{n}.$
In the same probability space, let $X_{1}, X_{2}, \ldots$ and $Y_{1}, Y_{2}, \ldots$ be random variables.
We write $X_{n}=o_{p}\left(Y_{n}\right)$ if and only if $X_{n} / Y_{n} \stackrel {p }\rightarrow 0$, where $ \stackrel {p }\rightarrow$ means convergence in probability.
We write $X_{n}=O_{p}\left(Y_{n}\right)$ if and only if for any $\varepsilon>0,$ there exists constant $C_{\varepsilon}>0$ such that $\sup _{n} \mP \left(\left\|X_{n}\right\| \geq C_{\varepsilon}\left|Y_{n}\right|\right)<\varepsilon.$
When $\beta^{0} \in \mathbb{R}^{p}$ is the true coefficient vector to be estimated, $S=\operatorname{supp} (\bbeta^{0})= \{j \in[p]:\beta_{j}^{0} \neq 0\}$ denote the nonzero elements in $\bbeta^{0}$.
Let $s_0=|S|$ denote the number of non-zero elements in $\bbeta^{0}$.

\subsection{Model settings: Sub-Gaussian GLMs}\label{subsec:model}

In this article, the design matrix $\bX=\left(\bx_{1}, \ldots, \bx_{n}\right)^{T} \in \mathbb{R}^{n \times p}$ is random. The response variable is written in a vector $\bY=\left(y_{1}, \ldots, y_{n}\right)^{T} \in \mathbb{R}^{n}$. The data $\{(\bx_i, y_i)\}_{i=1}^{n}$ are the $n$ i.i.d observations. Each $y_i$ given $\bx_i$ follows a GLM if
\begin{equation}\label{glmlike}
f(y_i|\bx_i)=\exp \{\frac{y_{i} \bx_{i}^{T} \bbeta^0-b\left(x_{i}^{T} \bbeta^0\right)}{a(\psi)}+c\left(y_{i} ; \psi\right)\},
\end{equation}
where $\psi \in \mathbb{R}>0$ is fixed and it is a known scale parameter; $ \bbeta^0 \in \mathbb{R}^{p}$ is the GLM parameter of interest. The term $c\left(y_{i} ; \psi\right)$ is the weight function and ${b}(\theta)$ is the log-partition function. Without loss of generality, we assume that $a(\psi)=1$.

Our goal is to estimate the parameter $ \bbeta^0 $ in GLMs  \eqref{glmlike}  given $n$ i.i.d. samples $\{(\bx_i, y_i)\}_{i=1}^{n} .$ From densities of exponential families random variables, the conditional expectation and variance of the response given the covariates are $\dot{b}( \bx_{i}^{T} \bbeta^0)=\mE (y_i | \bx_i)$ and $\ddot{b}( \bx_{i}^{T} \bbeta^0)=\mathrm{Var}(y_i | \bx_i)$.

Examples of \eqref{glmlike} contain linear models (${b}(t)=t^2/2$), logistic regressions ($b(t)=\log(1+e^t)$)
model, Poisson regressions ($b(t)=e^{t}$), and exponential regressions ($b(t)=-\log(-{t})$), among others. To estimate the unknown parameter $\bbeta^0 \in \mathbb{R}^p$, we consider the log-likelihood function
$\ell_n(\bbeta):=\frac{1}{n}\sum_{i=1}^{n}\left[y_{i} \bx_{i}^{T} \bbeta-b\left(\bx_i^T\bbeta\right)\right]$
and the estimation equation is
\begin{equation}\label{score}
\dot{\ell}_n(\bbeta)=\nabla_{\bbeta}\ell(\bbeta)=\frac{1}{n}\sum_{i=1}^{n}\bx_i[y_i-\dot{b}(\bx_i^T\bbeta)] = \bm 0.
\end{equation}

Note that \eqref{score} is also called the quasi-likelihood estimation equation if we do not assume that the responses belong to exponential family. This means that the use of the estimation equation \eqref{score} only involves the first-order conditional moments of $y_i$. Thus, we do not need to assume that the distribution of $y_i$ belongs to an exponential family. To generalize the assumption \eqref{glmlike}, we first recall the definition of a sub-Gaussian random variable \citep{zhang2020concentration}:

\begin{definition}
A zero-mean random variable $Z$ is said to follow sub-Gaussian with \emph{variance proxy} $\sigma^2 > 0$ (denoted by $Z \sim \mathrm{subG}(\sigma^2)$) if $\mathrm{E}e^{s Z}\le e^{\frac{\sigma^{2} s^{2}}{2}},~\forall s \in \mathbb{R}.$
\end{definition}
The compact parameter space assumption of exponential family leads to the sub-Gaussian family, see Theorem 3.1 in \cite{zhang2020concentration}. In this article, we do not need the assumption that $y_i$ follows the specific distribution defined as \eqref{glmlike}. Conditioning on $\bx_i$, under conditional variance proxy $\sigma_i=:\sigma(\bx_i^T \bbeta^0)$, we only assume the \emph{sub-Gaussian regression models}
\begin{equation}\label{subREG}
y_i - \mathbb E (y_i| \bx_i) \sim \mathrm{subG}(\sigma_i^2),~~i=1,2,\cdots,n,
\end{equation}
with mean $\dot b(\bx_i^T \bbeta^0)$ and variance $\ddot b(\bx_i^T \bbeta^0)$ for a certain function $b(\cdot)$.  Note that from (3.2) in \cite{zhang2020concentration} we know that $\sigma_i \ge \ddot b(\bx_i^T \bbeta^0)$.

\subsection{Regularity conditions for the debiased estimator}\label{subsec:Regularity}
For $\lambda>0$, we let
\begin{equation}\label{lassoes}
\widehat{\bbeta}=\widehat{\bbeta}(\lambda):=
\arg \min _{\bbeta \in \mathbb{R}}
\left\{-\ell(\bbeta)+\lambda\|\bbeta\|_{1}\right\}
\end{equation}
be the Lasso-type estimator of the true parameter defined by $\bbeta^0:=\arg \min _{\bbeta \in \mathbb{R}}
\left\{-\mE \ell(\bbeta)\right\}$ with no penalty. Hence, $\lambda\|\bbeta\|_{1}$ is the bias term in our following analysis, which need to be removed. Many researchers have used debiasing or correction methods \citep[e.g.,][]{van2014asymptotically,yuyi2018} to reduce bias of the regularized estimator and draw statistical inference more accurately. Likewise, we shall construct the debiased estimator for the sub-Gaussion GLMs.

Define the component-wise additive inverse of Hessian matrix
\begin{center}
$\bSigma:=-\mE \{\ddot{\ell}(\bbeta^0)\}=
\frac{1}{n}\mE\sum_{i=1}^{n}\bx_i\bx_i^T\ddot{b}(\bx_i^T\bbeta^0)=\mE\bx_i\bx_i^T\ddot{b}(\bx_i^T\bbeta^0).$
\end{center}
From \cite{van2014asymptotically}, the debiased estimator is constructed as
\begin{equation}\label{debia}
\widehat{\bbeta}^d:=\widehat{\bbeta}+
\widehat{\bTheta}\dot{\ell}(\widehat{\bbeta}).
\end{equation}
Here, $\widehat{\bbeta}$ is the Lasso estimator \eqref{lassoes}, and $\widehat{\bTheta}=(\widehat{\Theta}_{ij})_{i,j=1}^p $
is the estimation of the inverse of $\bSigma$. Let $\widehat{\bTheta}=(\widehat{\bTheta}_{1},\ldots, \widehat{\bTheta}_{p})^T $. In this paper, we use CLIME \citep{cai2011constrained} to derive $\widehat{\bTheta}$ and thus
\begin{equation}\label{Theta}
\widehat{\bTheta}_{j} \in \underset{\boldsymbol{\omega} \in \mathbb{R}^{p}}{\operatorname{argmin}}
\left\{
\left\|\boldsymbol{\omega}\right\|_{1}:\left\|\bSigma_n
 \boldsymbol{\omega}-\boldsymbol{e}_{j}
\right\|_{\infty} \leq \lambda_{n}
\right\},
\end{equation}
where the vector $\boldsymbol{e}_{j}\in \mathbb{R}^{p}$ is the unit vector whose $j$-th element is set to 1 and the other elements are all 0.  $\bSigma_n$ is defined as the sample version of $\boldsymbol \bSigma$ evaluated at $\widehat{\bbeta}$: $\bSigma_n:=-\ddot{\ell}(\widehat{\bbeta})=
\frac{1}{n} \sum_{i=1}^{n}\bx_i\bx_i^T\ddot{b}(\bx_i^T\widehat{\bbeta}).$

To derive the asymptotic normality of the debiased estimator, first we need to provide the oracle inequality of Lasso for sub-Gaussion regression models. The oracle inequality is the error bound which connects the performance of an obtained estimator with the true parameter. It guarantees the consistency of the sparse high-dimensional estimator. The sharp $\ell_1$-error or $\ell_2$-error oracle inequalities are powerful non-asymptotical error bounds that control the error term in deriving the asymptotic normality of debiased Lasso. In \cite{bickel2009simultaneous}, the restricted eigenvalue condition for sample covariance matrix has been proposed to derive oracle inequalities. Here, we shall use an extended version presented in \cite{dedieu2019an} as a modified version.
\begin{definition}[A new RE conditions]\label{new_REC}
For a fixed matrix $\bSigma$ and constant $\gamma_{1}, \gamma_{2}>0 .$ The restricted eigenvalue condition $\mathrm{RE}(k, \gamma, \bSigma)$ holds if there exists $\kappa\left(s, \gamma_{1}, \gamma_{2}\right)$ satisfies
$$
0<\kappa\left(k, \gamma_{1}, \gamma_{2}\right) \leq \inf _{|S| \leq s} \inf _{\bm{z} \in \Lambda\left(S, \gamma_{1}, \gamma_{2}\right)} \frac{\bm{z}^{T} \bSigma \bm{z}}{n\|\bm{z}\|_{2}^{2}},
$$
where $\gamma=\left(\gamma_{1}, \gamma_{2}\right)$ and for every subset $S \subset\{1, \ldots, p\},$ the cone $\Lambda\left(S, \gamma_{1}, \gamma_{2}\right) \subset \mathbb{R}^{p}$ is defined as:
$\Lambda\left(S, \gamma_{1}, \gamma_{2}\right)=\left\{\bm{z} \in \mathbb{R}^{p}:\left\|\bm{z}_{S^{c}}\right\|_{1} \leq \gamma_{1}\left\|\bm{z}_{S}\right\|_{1}+\gamma_{2}\left\|z_{S}\right\|_{2}\right\}$.
\end{definition}

Now, we give the regularity conditions used for deriving the consistency of Lasso and asymptotic normality of the debiased estimator for sub-Gaussian regression models.
\begin{itemize}
\item[{(C1)}]
The covariates $\{\bx_i\}_{i = 1}^n$ are i.i.d. random vectors and $\max_{i \in[n]} \left\|x_{i}\right\|_{\infty} \leq L$.

\item[{(C2)}] There exists $B>0$ so that
$\|\bbeta^0\|_1\le B.$

 \item[{(C3)}]
(RE conditions: $\mathrm{RE}(s_0, \gamma, \mE(\bx_1 \bx_1^{T}))$) There exists $\kappa^*>0$, so that
\begin{center}
$\inf _{|S| \leq s_0} \inf _{\bm{z} \in \Lambda\left(S, \gamma_{1}, \gamma_{2}\right)} \frac{\bm{z}^{T} \mE(\bx_1 \bx_1^{T}) \bm{z}}{\|\bm{z}\|_{2}^{2}}>\kappa^*.$
\end{center}
\item[{(C4)}]
{(i)}
$\{y_i - \mathbb E (y_i | \bx_i)\}_{i = 1}^n$ are independent and non-degenerated $\{{\rm{subG}}(\sigma_i^2(\bx_i^T \bbeta^0))\}_{i = 1}^n$ distributed with $0<C_b^2 := \max_{1 \leq i \leq n} \sigma_i^2(\bx_i^T \bbeta^0)  < \infty$. Assume $\mathbb E (y_i | \bx_i):=\dot b(\bx_i^T \bbeta^0)$ and $0<\mathbb Var (y_i | \bx_i):=\ddot b(\bx_i^T \bbeta^0)< \infty$ for a function $b(\cdot)$ under {(C1,C2)}.

{(ii)}
The function $\ddot{b}(\theta)$ is Lipschitz continuous in a compact set,i.e. there exists $R >0$ so that
$ \underset{|a-\tilde{a}|\leq R}{\sup }
\frac{|\ddot{b}(a)-\ddot{b}(\tilde{a})|}{|a-\tilde{a}|} \leq C_R .$
\item[{(C5)}]{(i)} $n \ll p$ and for some $0 < r < 1/5$: $p=o(e^{n^{r}})$; {(ii)}  $s_0=o(\frac{n^{1 / 4}}{\log p}).$
\item[{(C6)}] Assume $\underset{n\rightarrow \infty}{\liminf} \left\|\bSigma^{-1}\right\|_{\mathrm{op}, 1}>0.$ Denote $r_{j}:=\sum_{i=1}^{p} \mathbbm{1}\{\bSigma^{-1}_{i j} \neq 0\}
 ,j\in [p]$, then we assume
\begin{center}
$\left\|\bSigma^{-1}\right\|_{\mathrm{op}, 1}^{2}\max _{j\in[p]} r_{j}= O(\sqrt{\log p}).$
\end{center}
\end{itemize}

\begin{remark}
Assumptions {(C1)} and {(C2)} are commonly used for GLMs \citep{blazere2014oracle,
ma2020global,zhang2022elastic}.
The restricted eigenvalue condition {(C3)} was first introduced by \cite{bunea2008honest} and \cite{bickel2009simultaneous} when proving consistency of Lasso.
Condition {(C4)} gives some regularities on the variance of th response $y_i$. {(C4)(i)} requires $y_i$ follows sub-Gaussian distribution with uniformly bounded variance. That we assume sub-Gaussianity is more general than the exponential family with compact parameter space assumption in ordinary GLMs, see Proposition 3.3(d) in \cite{zhang2020concentration}. {(C4)(ii)} is also used in the aforementioned articles on GLMs which is easy to satisfy for many commonly used functions.
Condition {(C5)} provides the required rate of $n, p$ and $s_0$.
\cite{yuyi2018} used similar conditions on Heissian matrix as {(C6)} which is more restrictive than our assumption.
\end{remark}

Following the new techniques in \cite{dedieu2019an} for linear models, we develop new oracle inequalities serving for the debiased Lasso analysis, which is crucial and fundamental in the theory for FDR control below. Existing oracle inequalities for GLMs in the high-dimension such as Corollary 3 in \cite{negahban2009unified}, Theorem III.8 in \cite{blazere2014oracle} and Theorem 8.1 in \cite{zhang2020concentration} are of order $O({s_0}\sqrt {\frac{{\log p}}{n}} )$ which is not sharper than our result. The following $\ell_1$-error and $\ell_2$-error oracle inequalities of Lasso for high-dimensional sub-Gaussian models matches the optimal minimax lower bounds $O({s_0}\sqrt {\frac{{\log (p/{s_0})}}{n}} )$ in \cite{ye2010rate}.

\begin{theorem} \label{lassobeta1}
Assume that the i.i.d. samples $\{(\bx_i, y_i)\}_{i=1}^{n}$ are governed by \emph{Sub-Gaussian GLMs} \eqref{subREG} satisfying (C4)(i). Let $\lambda = 12 \alpha \sigma L \sqrt{\frac{\log (2p e /s_0)}{n} \log \left( \frac{1}{\delta}\right)}$
where $0<{\delta}<1$ and $\alpha>1$ such that ${1 - \sqrt {\frac{{\log (2p)}}{{{\alpha ^2}\log (2pe/{s_0})}}} }>0$ Set $K_{\alpha}=\frac{{1 +\sqrt {\frac{{\log (2p)}}{{{\alpha ^2}\log (2pe/{s_0})}}} }}{{1 - \sqrt {\frac{{\log (2p)}}{{{\alpha ^2}\log (2pe/{s_0})}}} }}$. Suppose conditions {(C1), (C2)} and the RE condition (C3) with $\inf _{|S| \leq {s_0}} \inf _{z \in \Lambda\left(S, \gamma_{1}, \gamma_{2}\right)} \frac{z^{T} \mE(\bx_1 \bx_1^{T}) z}{\|z\|_{2}^{2}}>\kappa^*>0$ for $\gamma_{1}=\frac{\alpha}{\alpha-1}$ and $\gamma_{2}=\frac{\sqrt{s_0}}{\alpha-1}$. Then, with probability at least $1 - \delta$ we have
\begin{align*}
\Vert \widehat{\bbeta}-\bbeta^0\Vert_{2}\le \frac{2 \sqrt{s_0}}{\kappa^* \mathop {\inf }\limits_{\left| t \right| \le L(B + K_\alpha)}\ddot b(t)}\lambda=\frac{24 \alpha \sigma L \sqrt{s_0}}{\kappa^* \mathop {\inf }\limits_{\left| t \right| \le L(B + K_\alpha)}\ddot b(t)} \sqrt{\frac{\log (2p e /s_0)}{n} \log \left( \frac{1}{\delta}\right)},\\
\Vert \widehat{\bbeta}-\bbeta^0\Vert_{1}\le \frac{2{s_0}}{\kappa^* \mathop {\inf }\limits_{\left| t \right| \le L(B + K_\alpha)}\ddot b(t)}\lambda=\frac{24 \alpha \sigma L {s_0}}{\kappa^* \mathop {\inf }\limits_{\left| t \right| \le L(B + K_\alpha)}\ddot b(t)} \sqrt{\frac{\log (2p e /s_0)}{n} \log \left( \frac{1}{\delta}\right)}.
\end{align*}
\end{theorem}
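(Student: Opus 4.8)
The plan is to follow the classical "basic inequality" route for penalized $M$-estimators, but organized around the modified restricted-eigenvalue cone $\Lambda(S,\gamma_1,\gamma_2)$ of Definition \ref{new_REC} so that the resulting rate carries the sharper $\log(2pe/s_0)$ factor rather than $\log p$. First I would write down the basic inequality: since $\widehat{\bbeta}$ minimizes $-\ell(\bbeta)+\lambda\|\bbeta\|_1$, comparing its objective value with that of $\bbeta^0$ gives $-\ell(\widehat{\bbeta})+\lambda\|\widehat{\bbeta}\|_1 \le -\ell(\bbeta^0)+\lambda\|\bbeta^0\|_1$, which after rearranging and a one-term Taylor expansion of $\ell$ around $\bbeta^0$ yields a quadratic remainder term plus a linear term involving the empirical score $\dot\ell(\bbeta^0) = \frac1n\sum_i \bx_i(y_i-\dot b(\bx_i^T\bbeta^0))$. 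The key probabilistic input is a uniform bound on $\|\dot\ell(\bbeta^0)\|_\infty$: under (C1) ($\|\bx_i\|_\infty\le L$) and (C4)(i) (the centered responses are $\mathrm{subG}(\sigma_i^2)$ with $\sigma_i^2\le C_b^2$), each coordinate $\frac1n\sum_i x_{ij}(y_i-\dot b(\bx_i^T\bbeta^0))$ is a sum of independent sub-Gaussians with variance proxy at most $L^2\sigma^2/n$, so a Hoeffding-type tail bound plus a union bound over the $p$ coordinates shows that on an event of probability at least $1-\delta$ one has $\|\dot\ell(\bbeta^0)\|_\infty \le c\,\sigma L\sqrt{\tfrac{\log(2p)}{n}\log(1/\delta)}$; the chosen $\lambda$ dominates twice this quantity, with the ratio controlled exactly by $K_\alpha$ and the constant $12\alpha$.

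On that event, the standard argument (Hölder on the linear term, then splitting $\|\widehat{\bbeta}\|_1 - \|\bbeta^0\|_1 \ge \|h_{S^c}\|_1 - \|h_S\|_1$ for $h:=\widehat{\bbeta}-\bbeta^0$) produces two facts simultaneously: (a) the error vector $h$ lies in the cone $\Lambda(S,\gamma_1,\gamma_2)$ with $\gamma_1 = \frac{\alpha}{\alpha-1}$ and $\gamma_2=\frac{\sqrt{s_0}}{\alpha-1}$ — here the $\sqrt{s_0}\|h_S\|_2$ term in the cone is what lets us replace a $\|h_S\|_1\le\sqrt{s_0}\|h_S\|_2$ bound by something that ultimately gives the $\log(p/s_0)$ improvement, following \cite{dedieu2019an}; and (b) the quadratic form $h^T(-\ddot\ell(\tilde\bbeta))h$ (with $\tilde\bbeta$ on the segment between $\widehat\bbeta$ and $\bbeta^0$) is bounded above by a constant times $\lambda\sqrt{s_0}\|h\|_2$. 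To turn the left side into $\|h\|_2^2$ I would lower-bound the empirical quadratic form: $-\ddot\ell(\tilde\bbeta) = \frac1n\sum_i \bx_i\bx_i^T\ddot b(\bx_i^T\tilde\bbeta)$, and since $\|\tilde\bbeta\|_1$ stays within $B+K_\alpha$-type radius (this needs a short a-priori argument bounding $\|h\|_1$ crudely using (C2) and the cone membership, so that $|\bx_i^T\tilde\bbeta|\le L(B+K_\alpha)$), we get $\ddot b(\bx_i^T\tilde\bbeta)\ge \inf_{|t|\le L(B+K_\alpha)}\ddot b(t) =: c_b >0$; combined with the population RE condition (C3) transferred to the empirical matrix, $h^T(-\ddot\ell(\tilde\bbeta))h \ge c_b\,\kappa^*\|h\|_2^2$. (Strictly, transferring (C3) from $\mathbb E\bx_1\bx_1^T$ to $\frac1n\sum\bx_i\bx_i^T$ on the cone requires a concentration step, but condition (C5) and $\|\bx_i\|_\infty\le L$ make this routine, or one can absorb it into the constant; I would flag this as the one place where care is needed.)

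Putting the two sides together gives $c_b\kappa^*\|h\|_2^2 \le C\lambda\sqrt{s_0}\|h\|_2$, hence $\|h\|_2 \le \frac{2\sqrt{s_0}}{\kappa^* c_b}\lambda$ after tracking that $C=2$ with the stated normalization; the $\ell_1$ bound then follows from cone membership, $\|h\|_1 \le (1+\gamma_1)\|h_S\|_1 + \gamma_2\|h_S\|_2 \le \big((1+\gamma_1)\sqrt{s_0}+\gamma_2\big)\|h\|_2$, and the choice of $\gamma_1,\gamma_2$ collapses this prefactor to $\sqrt{s_0}$, yielding $\|h\|_1\le \frac{2s_0}{\kappa^* c_b}\lambda$. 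Substituting the explicit $\lambda$ gives the displayed expressions. The main obstacle I anticipate is not any single estimate but the bookkeeping that makes the constants come out exactly as $2\sqrt{s_0}$ and $2s_0$ with the factor $24\alpha$ in front — in particular, verifying that the cone parameters $\gamma_1=\frac{\alpha}{\alpha-1}$, $\gamma_2=\frac{\sqrt{s_0}}{\alpha-1}$ are precisely what the basic inequality forces once $\lambda$ is taken to be $K_\alpha$ times (twice) the score bound, and that the a-priori radius $B+K_\alpha$ for the Taylor point is actually guaranteed; the sub-Gaussian tail bound itself and the RE lower bound are standard once set up.
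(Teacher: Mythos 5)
Your outline follows the classical basic-inequality route, but it misses the one ingredient that actually makes this theorem work, and as a result two of your key steps do not follow from your probabilistic input. You take as the key input a union bound on $\|\dot{\ell}(\bbeta^0)\|_{\infty}$ of order $\sigma L\sqrt{\log(2p)/n}$. From an $\ell_\infty$ bound on the score, the standard argument yields only the classical cone $\|h_{S^c}\|_1\le \gamma_1\|h_S\|_1$ and the bound $\langle \dot{\ell}(\bbeta^0),h\rangle\le \|\dot\ell\|_\infty\|h\|_1$, which forces $\lambda\gtrsim\sqrt{\log p/n}$ and gives the rate $s_0\sqrt{\log p/n}$ --- it cannot produce the mixed cone $\Lambda(S,\gamma_1,\gamma_2)$ with $\gamma_2=\sqrt{s_0}/(\alpha-1)$, nor the $\log(2pe/s_0)$ factor with the stated constants. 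The paper's proof instead controls the \emph{non-increasing rearrangement} of the score coordinates: with $g_j=\frac{1}{\sqrt n}\sum_i x_{ij}(y_i-\mE y_i)$ and weights $\lambda_j=\sqrt{\log(2p/j)}$, Lemma B.1 (from \cite{dedieu2019an}) gives $\sup_j g_{(j)}/(\sigma L\lambda_j)\le 12\sqrt{\log(1/\delta)}$ with probability $1-\delta$, whence $\sum_j g_j\Delta_j\le 12\sigma L\sqrt{\log(1/\delta)}\bigl(\sqrt{s_0\log(2pe/s_0)}\,\|\Delta_S\|_2+\lambda_{s_0}\|\Delta_{S^c}\|_1\bigr)$ after the Stirling bound $\sum_{j\le s_0}\lambda_j^2\le s_0\log(2pe/s_0)$ and Cauchy--Schwarz. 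It is \emph{this} bound --- not the cone definition --- that simultaneously produces the $\gamma_2\|\Delta_S\|_2$ term in the cone, the factor $\log(2pe/s_0)$ in $\lambda$, and the constant $12\alpha$. Your remark that the $\sqrt{s_0}\|h_S\|_2$ term in the cone is "what gives the $\log(p/s_0)$ improvement" has the causality reversed: the cone is a consequence of the sharper empirical-process bound, not its source.

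Two further points. First, your a-priori radius argument is circular as sketched: the cone $\Lambda(S,\gamma_1,\gamma_2)$ is scale-invariant, so cone membership plus (C2) does not bound $\|\widehat\bbeta\|_1$; the paper derives $\|\widehat\bbeta\|_1\le K_\alpha$ directly from the basic inequality together with the same weighted score bound (via $\|\widehat\bbeta\|_1\le\|\bbeta^0\|_1+\max_j\sqrt{\log(2p/j)/(\alpha^2\log(2pe/s_0))}\,(\|\bbeta^0\|_1+\|\widehat\bbeta\|_1)$), which is exactly where the constraint $1-\sqrt{\log(2p)/(\alpha^2\log(2pe/s_0))}>0$ and the definition of $K_\alpha$ come from. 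Second, on the quadratic form: the paper does not lower-bound the empirical Hessian and transfer (C3) by concentration; it passes to the population Kullback--Leibler divergence $\mE[K_n(\bbeta^0,\bbeta;\bX)]=\inf\ddot b\cdot\mE[(\bx^T h)^2]\ge\kappa^*\inf_{|t|\le L(B+K_\alpha)}\ddot b(t)\,\|h\|_2^2$ and applies (C3) directly to $\mE(\bx_1\bx_1^T)$. Your instinct to flag the empirical-to-population transfer as delicate is sound, but the fix is to avoid it as the paper does, not to absorb it into a constant.
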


Note that here ${\kappa^* \mathop {\inf }\limits_{\left| t \right| \le L(B + K_\alpha)}\ddot b(t)}>0$ by the variance assumption. This is because the non-degenerate exponential distribution family will not concentrate at one point under \eqref{glmlike}. Based on Theorem \ref{lassobeta1}, the asymptotic normality for all linear combinations of debiased Lasso estimator is established in Proposition \ref{main-thm1} below.
\begin{proposition}\label{main-thm1}
Suppose {(C1)--(C6)} hold. The estimator $\widehat{\bbeta}$ is calculated as $\eqref{lassoes}$ with the tuning parameter $\lambda \asymp \sqrt{\frac{\log (p/s_0)}{n}}$. Set $\lambda_n \asymp \left\|\bSigma^{-1}\right\|_{\mathrm{op}, 1}s_0\sqrt{\frac{\log p}{n}}$ in \eqref{Theta} to derive $\widehat{\bTheta}$. Then for the debiased estimator  $\widehat{\bbeta}^d$ defined in $\eqref{debia}$ with $\widehat{\bbeta}$ and $\widehat{\bSigma}$, and any vector $\boldsymbol{c}\in \mathbb{R}^p$ satisfying $\|\boldsymbol{c}\|_1 = 1$ and $\boldsymbol{c}^T \bSigma^{-1}\boldsymbol{c} \rightarrow \sigma^2 \in (0,\infty)$, when $n\to \infty$, we have
\begin{center}
$\sqrt{n}\boldsymbol{c}^T (\widehat{\bbeta}^d - \bbeta^0) \stackrel{d}{\to} \mathcal{N}(0, \sigma^2)$
and $\frac{\sqrt{n}\boldsymbol{c}^T (\widehat{\bbeta}^d - \bbeta^0)}
{\sqrt{\boldsymbol{c}^T
\widehat{\bTheta}\boldsymbol{c}}}\stackrel{d}{\to} \mathcal{N}(0, 1).$
\end{center}
\end{proposition}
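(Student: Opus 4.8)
The plan is to follow the debiasing decomposition of \citet{van2014asymptotically}, adapted to the sub-Gaussian GLM score \eqref{score}: first I would isolate one leading linear term to which a triangular-array central limit theorem applies, and then bound three remainders using Theorem~\ref{lassobeta1}, the CLIME error rate, and the Lipschitz bound (C4)(ii). Writing $\varepsilon_i := y_i - \dot{b}(\bx_i^T\bbeta^0)$ we have $\dot{\ell}(\bbeta^0) = \tfrac1n\sum_{i=1}^n\bx_i\varepsilon_i$, and the exact expansion $\dot{\ell}(\widehat{\bbeta}) - \dot{\ell}(\bbeta^0) = -\bar{\bSigma}(\widehat{\bbeta}-\bbeta^0)$ with $\bar{\bSigma} := -\int_0^1\ddot{\ell}(\bbeta^0+t(\widehat{\bbeta}-\bbeta^0))\,dt$ turns \eqref{debia} into
\begin{align*}
\sqrt{n}\,\boldsymbol{c}^T(\widehat{\bbeta}^d-\bbeta^0)
&= \underbrace{\tfrac{1}{\sqrt{n}}\sum_{i=1}^{n}\boldsymbol{c}^T\bSigma^{-1}\bx_i\varepsilon_i}_{T_1}
+ \underbrace{\tfrac{1}{\sqrt{n}}\sum_{i=1}^{n}\boldsymbol{c}^T(\widehat{\bTheta}-\bSigma^{-1})\bx_i\varepsilon_i}_{R_1} \\
&\quad + \underbrace{\sqrt{n}\,\boldsymbol{c}^T(\boldsymbol{I}-\widehat{\bTheta}\bSigma_n)(\widehat{\bbeta}-\bbeta^0)}_{R_2}
+ \underbrace{\sqrt{n}\,\boldsymbol{c}^T\widehat{\bTheta}(\bSigma_n-\bar{\bSigma})(\widehat{\bbeta}-\bbeta^0)}_{R_3}.
\end{align*}
The summands of $T_1$ are i.i.d., mean zero (since $\E[\varepsilon_i\mid\bx_i]=0$), with variance $\boldsymbol{c}^T\bSigma^{-1}\E[\bx_1\bx_1^T\ddot{b}(\bx_1^T\bbeta^0)]\bSigma^{-1}\boldsymbol{c}=\boldsymbol{c}^T\bSigma^{-1}\boldsymbol{c}\to\sigma^2$, using $\E[\varepsilon_i^2\mid\bx_i]=\ddot{b}(\bx_i^T\bbeta^0)$ and the definition of $\bSigma$; since $|\boldsymbol{c}^T\bSigma^{-1}\bx_i|\le L\|\bSigma^{-1}\|_{\mathrm{op},1}$, which grows at most polylogarithmically in $p$ by (C6), while $\varepsilon_i$ has uniformly bounded sub-Gaussian moments by (C4)(i), a Lyapunov condition is verified under (C5)(i) and $T_1\stackrel{d}{\to}\mathcal{N}(0,\sigma^2)$.

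It then remains to show $R_1,R_2,R_3=o_p(1)$, which I would do on the high-probability event of Theorem~\ref{lassobeta1}, so that $\|\widehat{\bbeta}-\bbeta^0\|_1=O_p(s_0\sqrt{\log(p/s_0)/n})$ and the Lipschitz radius in (C4)(ii) is active. The key input is the entrywise bound $\|\bSigma_n-\bSigma\|_\infty=O_p(s_0\sqrt{\log p/n})$, obtained from a Hoeffding deviation of $\tfrac1n\sum_i\bx_i\bx_i^T\ddot{b}(\bx_i^T\bbeta^0)$ about its mean (bounded summands by (C1),(C2)) plus the perturbation $\tfrac1n\sum_i\bx_i\bx_i^T[\ddot{b}(\bx_i^T\widehat{\bbeta})-\ddot{b}(\bx_i^T\bbeta^0)]$, the latter $\lesssim L^3C_R\|\widehat{\bbeta}-\bbeta^0\|_1$ by (C4)(ii). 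This makes each column of $\bSigma^{-1}$ feasible for \eqref{Theta} at the stated $\lambda_n$, giving $\|\widehat{\bTheta}\|_{\mathrm{op},\infty}\le\|\bSigma^{-1}\|_{\mathrm{op},1}$ and $\|\boldsymbol{I}-\widehat{\bTheta}\bSigma_n\|_\infty\le\lambda_n$, hence $|R_2|\le\sqrt{n}\lambda_n\|\widehat{\bbeta}-\bbeta^0\|_1=o_p(1)$ under (C5)--(C6). The usual CLIME comparison with the feasible $\bSigma^{-1}$, together with $\|\bSigma^{-1}\|_{\mathrm{op},\infty}=\|\bSigma^{-1}\|_{\mathrm{op},1}$ and the row sparsities $r_j$, yields $\|\widehat{\bTheta}-\bSigma^{-1}\|_\infty=O_p(\|\bSigma^{-1}\|_{\mathrm{op},1}\lambda_n)$ and $\max_j\|\widehat{\bTheta}_j-\bSigma^{-1}_j\|_1=O_p(\max_j r_j\|\bSigma^{-1}\|_{\mathrm{op},1}^2 s_0\sqrt{\log p/n})$; bounding $|R_1|\le\|(\widehat{\bTheta}-\bSigma^{-1})^T\boldsymbol{c}\|_1\,\|\sqrt{n}\,\dot{\ell}(\bbeta^0)\|_\infty$ with the sub-Gaussian maximal bound $\|\sqrt{n}\,\dot{\ell}(\bbeta^0)\|_\infty=O_p(\sqrt{\log p})$ and invoking (C6) to make $\max_j r_j\|\bSigma^{-1}\|_{\mathrm{op},1}^2=O(\sqrt{\log p})$ gives $|R_1|=O_p(s_0(\log p)^{3/2}/\sqrt{n})=o_p(1)$ under (C5). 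Finally a second-order Lipschitz estimate gives $\|\bSigma_n-\bar{\bSigma}\|_\infty\lesssim\|\widehat{\bbeta}-\bbeta^0\|_1$, so $|R_3|\le\sqrt{n}\|\widehat{\bTheta}^T\boldsymbol{c}\|_1\|\bSigma_n-\bar{\bSigma}\|_\infty\|\widehat{\bbeta}-\bbeta^0\|_1\lesssim\sqrt{n}\|\bSigma^{-1}\|_{\mathrm{op},1}\|\widehat{\bbeta}-\bbeta^0\|_1^2=o_p(1)$ --- this genuinely GLM-specific, quadratic term is precisely what forces the rate $s_0=o(n^{1/4}/\log p)$ in (C5). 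Slutsky then yields $\sqrt{n}\,\boldsymbol{c}^T(\widehat{\bbeta}^d-\bbeta^0)\stackrel{d}{\to}\mathcal{N}(0,\sigma^2)$, and since $|\boldsymbol{c}^T\widehat{\bTheta}\boldsymbol{c}-\boldsymbol{c}^T\bSigma^{-1}\boldsymbol{c}|\le\|\widehat{\bTheta}-\bSigma^{-1}\|_\infty=O_p(s_0\log p/\sqrt{n})=o_p(1)$, we get $\boldsymbol{c}^T\widehat{\bTheta}\boldsymbol{c}\stackrel{p}{\to}\sigma^2$ and the studentized statement follows by Slutsky again.

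I expect the joint control of $\widehat{\bTheta}$ to be the main obstacle, because two things must line up at once. Since $\bSigma_n$ is evaluated at the random Lasso point $\widehat{\bbeta}$ rather than at $\bbeta^0$, the entrywise error $\|\bSigma_n-\bSigma\|_\infty$ picks up an extra factor $s_0$ over the classical rate $\sqrt{\log p/n}$, and this propagates into the CLIME error and hence into $R_1$ and $R_2$; one must check that $\lambda_n\asymp\|\bSigma^{-1}\|_{\mathrm{op},1}s_0\sqrt{\log p/n}$ is simultaneously large enough for feasibility (which is what produces the upper bound on $\|\widehat{\bTheta}\|_{\mathrm{op},\infty}$) and small enough for the remainders. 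Moreover $R_1$ cannot be handled by conditioning, since $\widehat{\bTheta}$ depends on the $\varepsilon_i$, so one is left with only the crude $\ell_1$--$\ell_\infty$ split, for which exactly assumption (C6)'s $\|\bSigma^{-1}\|_{\mathrm{op},1}^2\max_j r_j=O(\sqrt{\log p})$ is what makes it vanish. Everything else is bookkeeping with Theorem~\ref{lassobeta1} and the rate conditions (C5).
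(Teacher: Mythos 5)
Your proposal is correct and follows essentially the same route as the paper: the same leading term $\boldsymbol{c}^T\bSigma^{-1}\dot{\ell}(\bbeta^0)$ handled by a triangular-array (Lindeberg/Lyapunov) CLT, the same treatment of $(\widehat{\bTheta}-\bSigma^{-1})\dot{\ell}(\bbeta^0)$ via the CLIME elementwise error, the row sparsities $r_j$, and $\|\dot{\ell}(\bbeta^0)\|_\infty=O_p(\sqrt{\log p/n})$, and the same rate inputs (Theorem~\ref{lassobeta1}, Hoeffding for $\|\ddot{\ell}(\bbeta^0)+\bSigma\|_\infty$, and the Lipschitz bound in (C4)(ii)) for the bias term. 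The only divergence is organizational: the paper writes the Taylor remainder with a per-coordinate mean-value matrix $W(\widetilde{\bbeta})$ and bounds $\|\widehat{\bTheta}W(\widetilde{\bbeta})+\boldsymbol{I}_p\|_\infty$ in one piece, whereas you use the integral-form Hessian $\bar{\bSigma}$ and split into $(\boldsymbol{I}-\widehat{\bTheta}\bSigma_n)$, controlled directly by the CLIME feasibility constraint $\lambda_n$, plus $\widehat{\bTheta}(\bSigma_n-\bar{\bSigma})$, controlled by the Lipschitz property --- a cosmetic rearrangement that yields identical rates.
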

According to the asymptotic normality in Proposition \ref{main-thm1}, we can use the debiased estimator $\widehat{\bbeta}^d$ for constructing the test statistics in the succeeding section.

\section{Multiple Testing Procedures}
In this section, we discuss procedures for large-scale multiple testing. A data-based multiple testing procedure that can be applied to model selection under high-dimensional conditions is provided based on the debiased Lasso estimator. Further, we prove that it can approximately control directional FDR at a pre-specified level $\alpha$.

Furthermore, we can also control the number of directional falsely discovered variables (FDV) and the directional family-wise error rate (FWER). We also study statistical power and related theoretical results. A testing procedure for the two-sample multiple hypothesis testing problem is also given.

\subsection{Problem formulation}
For high-dimensional regression models defined in Section \ref{subsec:model}, we consider the following multiple testing problem:
\begin{equation}\label{H0H1}
H_{0,j}:\beta_j=0 ~\text{vs}~H_{0,j}:\beta_j\not=0, j\in[p].
\end{equation}
The $p$ hypotheses here are tested at the same time and this problem is actually equivalent to select the set $\widehat{S}$ consisting of significant features. In many studies, researchers are concerned about whether $\widehat{S}$ is consistent with the true set and the related standards include FDR, FDV, FWER. However, in this article we care more about the type-S error \citep[S stands for signs;][]{gelman2000type}, which is telling the sign of the paramter of interest wrongly. Specifically, for the test \eqref{H0H1} considered in this section about, it is a type-S error when we make a claim of $\beta_j<0$ while the fact is $\beta_j>0$. In fact, the connotations of $\beta_j>0$ and $\beta_j<0$ in practical applications can be very different. In many situations, they make more sense than only questioning whether the effect is nonzero. For example, for genes, it is meaningless to judge whether a gene has an effect on a certain disease. What we really care about is whether it has a positive effect or a negative effect on it. Therefore, we need some criteria that can describe this type-S error and a corresponding algorithm to meet these standards.

Following the definitions given by \cite{javanmard2019false}, we state the definition of directional FDR and further define directional FDV and directional FWER which characterize the type-S error for our testing problem \eqref{H0H1}.
Suppose the true value of $\bbeta$ in \eqref{score} is $\bbeta^0$.
Let $S=\operatorname{supp}{}
\left(\boldsymbol{\beta}^{0}\right)=\{j \in[p]:\left.\beta_{j}^{0} \neq 0\right\} \in [p]$. The set $\widehat{S}$ is the selected subset of feature variables with $\widehat{\text{sign}}_j\in\{1,-1\}, j \in \widehat{S}$ as the estimation of the sign of $\beta^0_j$.

\begin{definition}\label{fdrdir}
The directional false discovery proportion (FDP), denoted by $\textup{FDP}_\textup{d}$, and the directional false discovery rate (FDR), denoted by $\textup{FDR}_\textup{d}$, are defined as
\begin{equation}\nonumber
\textup{FDP}_\textup{d}=  \frac{|\{j\in \widehat{S}:\, \widehat{\sign}_j \neq \sign(\beta^0_j)\}|}{|\widehat{S}|\vee 1},~~~\textup{FDR}_\textup{d}= \mE[ \textup{FDP}_\textup{d}].
\end{equation}
\end{definition}

\begin{definition}\label{fdvdir}
The directional falsely discovered variables (FDV), denoted by $\textup{FDV}_\textup{d}$,  is
$\textup{FDV}_\textup{d}= \mE\left[ \sum_{j\in \widehat{S}} \mathbbm{1}
\{\widehat{\sign}_j \neq \sign(\beta^0_j)\} \right]. $ The directional family-wise error rate (FWER), denoted by $\textup{FWER}_\textup{d}$, is
$\textup{FWER}_\textup{d}= \mP( \sum_{j\in \widehat{S}} \mathbbm{1}
\{\widehat{\sign}_j \neq \sign(\beta^0_j)\} \geq 1).$
\end{definition}

Likewise, we give the definition of the directional statistical power, which is the proportion of the successfully identified relevant variables .

\begin{definition}\label{powerdir}
The directional statistical power is
$\textup{Power}_\textup{d}= \mE[\frac{|\{j\in \widehat{S}:\, \widehat{\sign}_j = \sign(\beta^0_j)\}|}{|S|\vee 1}].$
\end{definition}

\subsection{The procedures for directional FDR and FDV control}
In this section, we give the high-dimensional GLM Multiple Testing procedure (GMT) that controls directional FDR, and the procedure that controls directional FDV and FWER, denoted by ${\text{GMT}_{\text{FDV}}} $. To make the testing procedure more desirable and powerful, the theory of directional FDR and FDV control need an estimator that is tractable with limit distributions so that the inference procedure can be drawn. The debiased estimator and Proposition \ref{main-thm1} are employed to take on this task.\\

\textbf{Directional FDR control}. Using $\widehat{\boldsymbol{\beta}}^d$ defined in \eqref{debia}, we give the standardized statistic
\begin{equation}\label{standard1}
T_j = \frac{\sqrt{n} \widehat{\beta}^d_j}{\sqrt{\widehat{\Theta}_{jj}}}, j \in [p].
\end{equation}
Given threshold $t>0$, if $|T_j| \geq  t$, we reject $H_{0,j}: \beta_j=0$ if $|T_j|$ is large enough.

To control the directional FDR at the pre-specified level $0<\alpha<1$, the threshold naturally need to satisfy the condition
\begin{equation}\label{t-first}
\frac{\sum_{j\in S_{\geq0}} \mathbbm{1}( T_{j} \leq -t)
+\sum_{j\in S_{\leq0}} \mathbbm{1}( T_{j} \geq t)}
{ \big\{\sum_{j=1}^p \mathbbm{1}\{ |T_{j}| \ge t\} \vee 1 \big\}} \le \alpha,
\end{equation}
where $S_{\leq0}\equiv\{j\in[p]:\beta^0_j\leq 0\}$, and $S_{\geq 0}\equiv\{j\in[p]:\beta^0_j\geq 0\}$. In reality, however, both $S_{\geq0}$ and $S_{\leq0}$ are unknown. Thus, it is not easy to find a useable threshold $t$. According to the form of the numerator of \eqref{t-first}, we denote $\widetilde{G}(t)=\frac{1}{p} \left\{ \sum_{j\in S_{\geq0}} \mathbbm{1}( T_{j} \leq -t)
+\sum_{j\in S_{\leq0}} \mathbbm{1}( T_{j} \geq t)\right\}.$

Thanks to the asymptotic normality of $T_j$, when the sample size is large enough, we can use the tail of the standard normal distribution $G(t)=2-2\Phi(t)$ to approximate $\widetilde{G}(t)$, where $\Phi(t)$ is the distribution function of the standard normal distribution. In Proposition 5 of Supplementary Material, we shall prove that $G(t)$ is a good approximate for $\widetilde{G}(t)$.

Now, we are ready to give the GLM Multiple Testing (GMT) procedure for directional FDR control. In beloq, we will formally address the result that it is able to control directional FDP and FDR at the target level and also achieve high statistical power in the asymptotic sense.

For the pre-specified target $\textup{FDR}_\textup{d}$ level $\alpha \in [0,1]$ and $t_p=\sqrt{2\log p-2\log\log p}$, the GMT procedure takes the standardized test statistic $T_j, j \in [p]$ as input and process it with the following steps.\\

\textbf{GMT Procedure.} Calculate the threshold
\begin{equation}\label{t-fdr}
t_0 =\inf\bigg\{ 0\le t\le t_p: \frac{pG(t)}{ \sum_{j=1}^p \mathbbm{1}\{ |T_{j}| \ge t\} \vee 1   } \le \alpha  \bigg\}.
\end{equation}
If $t_0$ in \eqref{t-fdr} does not exist,  we set $t_0=\sqrt{2\log p}.$
For each $j\in[p]$, if $|T_j|\geq t_0$, then reject $H_{0,j}$. The sign of $T_j$ is taken as the estimator of $\sign(\beta^0_j)$.\\

\textbf{Directional FDV control}. In some cases, people are more interested in controling the specific number of FDV instead of controlling the inexact FDR. Since controlling $\textup{FDV}_\textup{d}$ under level $u (0<u<1)$ is directly related to controlling FWER by definition, we provide the approaches to controlling FDV and controlling FWER together, denoted by ${\text{GMT}_{\text{FDV}}}$.\\

\textbf{$\text{GMT}_{\textup{FDV}}$ Procedure.} For $\textup{FDV}_\textup{d}$ control, the tolerable number of $\textup{FDV}_\textup{d}$ is some integer $u<p$. For $\textup{FWER}_\textup{d}$ control, pre-specified a level $u \in (0,1)$. Denote $t_{\textup{FDV}}=G^{-1}\left(\frac{u}{p} \right).$ For each $j\in[p]$, if $|T_j|\geq t_{\textup{FDV}}$, then we reject $H_{0,j}$. The sign of $T_j$ is taken as the estimator of $\sign(\beta^0_j)$.\\

\textbf{Theoretical results of multiple testing procedures}. In this part, we give theoretical results which demonstrate that our procedures can asymptotically control directional FDR and FDV. This aim is highly related to the sign consistency which ensures the estimated sign and the true sign are equal with high probability. The proposed test statistic used in our procedure is a function of the estimated sign, and hence controlling directional FDR and FDV can be seen as an extension of sign consistency in the multiple testing framework. \cite{zhao2006model} studied sign consistency of Lasso based on the Irrepresentable Condition. Also, \cite{jia2013lasso} studied sign consistency of sparse Poisson-like heteroscedasticity model. For high-dimensional linear models, \cite{javanmard2019false} proposed a multiple testing procedure which is proved to control the directional FDR below a pre-assigned significance level. Our theories in this section generalizes it to high-dimensional sub-Gaussian regression models.

Besides the assumptions used for Proposition \ref{main-thm1}, we need some additional assumptions to limit $\bTheta:=\bSigma^{-1} = (\bTheta_1, \dots, \bTheta_p)$. Define the standardized matrix $\bTheta^0$ whose $jk$-th element is
\begin{equation}\label{Omega-0}
\Theta_{jk}^0=\frac{\Theta_{jk}}{\sqrt{\Theta_{jj}\Theta_{kk}}}, 1\leq j,k\leq p.
\end{equation}
Given any $\gamma>1/2$, denote $\mathcal{A}(\gamma) =\{(j,k): 1\leq j,k\leq p, |\Theta_{jk}^0|\geq(\log p)^{-2-\gamma}\}, \mathcal{B}(\rho) = \{(j,k): 1\leq j,k\leq p, |\Theta_{jk}^0|>\frac{1-\rho}{1+\rho}\}.$ The following conditions {(C7)} and {(C8)} are similar to the conditions used for Theorem 3.1 in \cite{javanmard2019false}.
\begin{itemize}
\item[{(C7)}] There exist constants $\gamma>1/2$ and $\rho\in (0,1/3)$ so that
$|\mathcal{A}(\gamma)| =o(p^{1+\rho}),~
|\mathcal{B}(\rho)|=O(p) .$
\item[{(C8)}] There exist constants $c_1,c_2$ so that $0<c_1\leq \sigma_{\min}(\bSigma)\leq\sigma_{\max}(\bSigma) \leq c_2.$
\end{itemize}
Condition {(C8)} implies $0<1/c_2\leq \sigma_{\min}(\bTheta)\leq\sigma_{\max}(\bTheta) \leq 1/c_1$, and for all $j\in[p]$, $\Theta_{jj} \in [1/c_2,1/c_1].$ Now, we give the theoretical result of GMT procedure.
\begin{theorem}\label{main-thm2}
Under {(C1)--(C8)}, for GMT procedure, we have, for pre-specified target level $\alpha$ and any $\varepsilon>0$,
\begin{equation}\label{fdp}
\lim_{(n,p) \rightarrow \infty} \mP\left( \frac{\textup{FDP}_\textup{d}}{\alpha(1- \frac{s_0}{2p})}\leq 1+\varepsilon \right)=1,~~\limsup_{(n,p) \rightarrow \infty}  \frac{\textup{FDR}_\textup{d}}{\alpha(1- \frac{s_0}{2p})}\leq 1.
\end{equation}
\end{theorem}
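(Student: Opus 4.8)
The plan is to pin down the directional false discovery proportion at the data-driven threshold $t_0$ in terms of $\widetilde{G}$ and $G$, and then to show that $\widetilde{G}(t)/G(t)\le(1-\frac{s_0}{2p})(1+o_p(1))$ uniformly over the admissible range $t\in[0,t_p]$. First I would record the elementary identity that, at any threshold $t$, coordinate $j$ yields a directional false discovery exactly when $j\in S_{\ge 0}$ and $T_j\le -t$, or $j\in S_{\le 0}$ and $T_j\ge t$; hence, with $\widehat S(t)=\{j:|T_j|\ge t\}$,
\[
\textup{FDP}_\textup{d}(t)=\frac{p\,\widetilde{G}(t)}{|\widehat S(t)|\vee 1},\qquad p\,\widetilde{G}(t)=\sum_{j\in S_{\ge 0}}\mathbbm{1}(T_j\le -t)+\sum_{j\in S_{\le 0}}\mathbbm{1}(T_j\ge t).
\]
By the definition of $t_0$ in \eqref{t-fdr} one has $p\,G(t_0)\le\alpha(|\widehat S(t_0)|\vee 1)$ (the boundary behaviour being standard); the degenerate case $t_0=\sqrt{2\log p}>t_p$ is disposed of separately, since then $p\,G(t_0)=o(1)$ and with probability tending to one no null coordinate reaches $|T_j|\ge t_0$, so every rejection is a strong signal recovered with the correct sign and $\textup{FDP}_\textup{d}=0$. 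Dividing the two displays gives $\textup{FDP}_\textup{d}(t_0)\le\alpha\,\widetilde{G}(t_0)/G(t_0)$.

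The core of the proof is the uniform bound $\sup_{0\le t\le t_p}\widetilde{G}(t)/G(t)\le(1-\frac{s_0}{2p})(1+o_p(1))$. Writing $S_0=\{j:\beta^0_j=0\}$, $S_+=\{j:\beta^0_j>0\}$, $S_-=\{j:\beta^0_j<0\}$ and noting $|S_0|=p-s_0$, I split $p\,\widetilde{G}(t)=\sum_{j\in S_0}\mathbbm{1}(|T_j|\ge t)+\sum_{j\in S_+}\mathbbm{1}(T_j\le -t)+\sum_{j\in S_-}\mathbbm{1}(T_j\ge t)$. From the decomposition underlying Proposition \ref{main-thm1}, $T_j=\mu_j+W_j+\Delta_j$, where $\mu_j=\sqrt n\,\beta^0_j/\sqrt{\widehat{\Theta}_{jj}}$ has the sign of $\beta^0_j$ and vanishes on $S_0$; $W_j$ is a normalized sum of independent mean-zero sub-Gaussian summands admitting a moderate-deviation Gaussian approximation $\mathbb P(|W_j|\ge t)=G(t)(1+o(1))$ uniformly on $[0,t_p]$ (legitimate because $t_p^2\asymp\log p=o(n^{1/3})$ under (C5)(i)); and $\max_j|\Delta_j|=o_p(1/\sqrt{\log p})$ by Theorem \ref{lassobeta1}, (C5)(ii) and the CLIME rate under (C6). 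Since $\max_j|\Delta_j|=o_p(1/\sqrt{\log p})$ and $G(t\pm\delta)/G(t)\to1$ uniformly for $t\le t_p$ whenever $\delta\sqrt{\log p}\to0$, each null indicator has $\mathbb E\,\mathbbm{1}(|T_j|\ge t)=G(t)(1+o(1))$ uniformly; bounding the variance of $\sum_{j\in S_0}\mathbbm{1}(|T_j|\ge t)$ by $o((p\,G(t))^2)$ uniformly on $[0,t_p]$ — via pairwise covariances of indicators controlled through the standardized precision matrix $\bTheta^0$ and the count of strongly correlated pairs furnished by (C7)--(C8) — and then discretizing $[0,t_p]$ while exploiting the monotonicity of $t\mapsto\mathbbm{1}(|T_j|\ge t)$, upgrades this to $\sum_{j\in S_0}\mathbbm{1}(|T_j|\ge t)\le(p-s_0)G(t)(1+o_p(1))$ uniformly. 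For the other sums, $\mu_j\ge0$ on $S_+$ gives $\mathbbm{1}(T_j\le -t)\le\mathbbm{1}(W_j+\Delta_j\le -t)$, of expectation at most $\tfrac12 G(t)(1+o(1))$, so the same concentration argument (using $\mathbb P(\sum_{j\in S_+}\mathbbm{1}(T_j\le -t)\ge1)\le\tfrac{s_0}{2}G(t)(1+o(1))$ where $s_0G(t)=o(1)$) yields $\sum_{j\in S_+}\mathbbm{1}(T_j\le -t)+\sum_{j\in S_-}\mathbbm{1}(T_j\ge t)\le\tfrac{s_0}{2}G(t)(1+o_p(1))$ uniformly. Adding the three pieces and dividing by $p\,G(t)$ yields the claimed bound.

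Combining the two steps, on an event of probability tending to one, $\textup{FDP}_\textup{d}(t_0)\le\alpha(1-\frac{s_0}{2p})(1+o_p(1))$, which is the first assertion of \eqref{fdp}. For the second, $\textup{FDP}_\textup{d}\le1$ deterministically and, for every $\varepsilon>0$, $\mathbb P(\textup{FDP}_\textup{d}>\alpha(1-\frac{s_0}{2p})(1+\varepsilon))\to0$; splitting $\mathbb E[\textup{FDP}_\textup{d}]$ over this event and its complement gives $\textup{FDR}_\textup{d}\le\alpha(1-\frac{s_0}{2p})(1+\varepsilon)+o(1)$, and letting $(n,p)\to\infty$ and then $\varepsilon\downarrow0$ gives $\limsup_{(n,p)\to\infty}\textup{FDR}_\textup{d}/\{\alpha(1-\frac{s_0}{2p})\}\le1$. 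The step I expect to be the main obstacle is the uniform-in-$t$ control in the second paragraph over thresholds as large as $t_p=\sqrt{2\log p-2\log\log p}$, where $p\,G(t_p)$ is only of polylogarithmic order: there the Gaussian approximation for each $T_j$ must be sharp at the moderate-deviation scale rather than merely the fixed-threshold CLT of Proposition \ref{main-thm1} (whence the need for $\max_j|\Delta_j|=o_p(1/\sqrt{\log p})$ and a Cram\'er-type refinement), and the variance of $\sum_j\mathbbm{1}(|T_j|\ge t)$, governed by the dependence among the $T_j$, must be shown to be of smaller order than $(p\,G(t))^2$ all the way up to $t_p$ — precisely the role of conditions (C7)--(C8) — with the discretization/chaining needed to make these estimates simultaneously uniform in $t$ forming the delicate technical core.
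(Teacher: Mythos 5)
Your proposal follows essentially the same route as the paper: reduce $\textup{FDP}_\textup{d}(t_0)$ to $\alpha\,\widetilde G(t_0)/G(t_0)$ via the definition of $t_0$, handle the degenerate case $t_0=\sqrt{2\log p}$ by showing the numerator vanishes with high probability, and prove the uniform bound $\sup_{t\le t_p}\widetilde G(t)/G(t)\le(1-\tfrac{s_0}{2p})(1+o_p(1))$ by centering $T_j$ (the paper's $T_j'$, $\check T_j$ with $\max_j|T_j'-\check T_j|=o_p(1/\sqrt{\log p})$), Cram\'er-type moderate deviations, a second-moment bound on the centered indicator sums using (C7)--(C8) to control correlated pairs, and geometric discretization of $[0,t_p]$ with monotonicity — which is exactly the content of the paper's Proposition 5 ($A_p\stackrel{p}{\to}0$). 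The argument and the identified technical core coincide with the paper's proof, so I have nothing substantive to add.
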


Theorem \ref{main-thm2} reveals that our method not only controls directional FDR, but also controls directional FDP. In practical applications, we are more concerned about whether we can control FDP in the current implement, rather than the expectation FDR. Also, the variance of FDP may be very large in some settings \citep{owen2005variance}, and only controlling FDR is not enough for controlling the variance of it.

Next, we have similar results for $\text{GMT}_{\textup{FDV}}$ procedure on controlling directional FDV and directional FWER.
\begin{theorem}\label{main-thm3}
Under conditions in Proposition \ref{main-thm1} and {(C8)}, given the target $\textup{FDV}_\textup{d}$ level $u<p$, for $\text{GMT}_{{FDV}}$ procedure we get
\begin{equation}\label{fdv}
\limsup_{(n,p) \rightarrow \infty}  \frac{\textup{FDV}_\textup{d}}{u(1- \frac{s_0}{2p})}\leq 1,
\end{equation}
Moreover, if $0<u<1$, we have
 \begin{equation}\label{fwer}
\limsup_{(n,p) \rightarrow \infty}  \frac{\textup{FWER}_\textup{d}}
{u(1- \frac{s_0}{2p})}\leq 1.
\end{equation}
\end{theorem}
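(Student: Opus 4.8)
The plan is to bound $\textup{FDV}_\textup{d}$ by a sum of per-coordinate tail probabilities and then read off the $\textup{FWER}_\textup{d}$ bound from it via Markov's inequality. Since the $\text{GMT}_{\textup{FDV}}$ procedure rejects $H_{0,j}$ precisely when $|T_j|\ge t_{\textup{FDV}}$ with the \emph{deterministic} cutoff $t_{\textup{FDV}}=G^{-1}(u/p)$, a direct count of directional false discoveries (a null coordinate always produces a wrong sign once rejected, since $\sign(0)=0\neq\widehat{\sign}_j\in\{\pm1\}$, while a signal coordinate produces a wrong sign only when $T_j$ falls in the tail opposite to $\sign(\beta^0_j)$) gives
\begin{align*}
\textup{FDV}_\textup{d}=\sum_{j\notin S}\mP(|T_j|\ge t_{\textup{FDV}})+\sum_{j:\,\beta^0_j>0}\mP(T_j\le -t_{\textup{FDV}})+\sum_{j:\,\beta^0_j<0}\mP(T_j\ge t_{\textup{FDV}}).
\end{align*}
Equivalently this equals $p\,\mE[\widetilde G(t_{\textup{FDV}})]$, so it suffices to control $\mE\widetilde G$ at $t_{\textup{FDV}}$, for which the approximation $\widetilde G\approx G$ established as Proposition~5 of the supplement is the natural tool. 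Two remarks: $1-\Phi(t_{\textup{FDV}})=u/(2p)$, so in the relevant regime $t_{\textup{FDV}}\asymp\sqrt{2\log p}$; and condition (C7), needed in Theorem~\ref{main-thm2} to tame the dependence among the $T_j$ feeding the \emph{data-driven} threshold \eqref{t-fdr}, plays no role here, because a deterministic cutoff requires only pointwise (uniform-in-$j$) tail estimates.

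The key technical input, which I would extract from the same linearization that underlies Proposition~\ref{main-thm1} but in a uniform, tail-valid form, is the representation $T_j=\sqrt{n}\,\beta^0_j/\sqrt{\widehat{\Theta}_{jj}}+W_j$ (with $W_j$ the self-normalized error) together with the uniform moderate-deviation bound
\begin{align*}
\sup_{1\le j\le p}\ \sup_{0\le t\le\sqrt{2\log p}}\ \left|\frac{\mP(W_j\ge t)}{1-\Phi(t)}-1\right|=o(1)
\end{align*}
and its left-tail analogue. Proving this is the main obstacle. Writing $\sqrt{n}(\widehat{\bbeta}^d-\bbeta^0)=\boldsymbol Z+\boldsymbol\Delta$ with $\boldsymbol Z$ the leading sub-Gaussian term and $\boldsymbol\Delta$ the debiasing remainder, one needs $\|\boldsymbol\Delta\|_\infty=o_p((\log p)^{-1/2})$ so that $\boldsymbol\Delta$ is negligible even against a threshold of order $\sqrt{2\log p}$; this is exactly where the sparsity rate (C5)(ii), the $\ell_1$/$\ell_2$ oracle bounds of Theorem~\ref{lassobeta1}, and the CLIME rate for $\widehat{\bTheta}$ enter, while the tails of the leading term are handled by sub-Gaussian concentration, with (C8) keeping $\widehat{\Theta}_{jj}$ bounded away from $0$ and $\infty$ uniformly in $j$ (so that both the shift's sign and the self-normalization are stable).

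Granting the moderate-deviation estimate, each null coordinate contributes $\mP(|T_j|\ge t_{\textup{FDV}})\le(1+o(1))\,2(1-\Phi(t_{\textup{FDV}}))=(1+o(1))\,G(t_{\textup{FDV}})=(1+o(1))u/p$, uniformly in $j$; and for a signal coordinate the shift $\sqrt{n}\beta^0_j/\sqrt{\widehat{\Theta}_{jj}}$ has the same sign as $\beta^0_j$, so a wrong-sign rejection forces $W_j$ into the tail opposite that shift, contributing $\le(1+o(1))(1-\Phi(t_{\textup{FDV}}))=(1+o(1))u/(2p)$, again uniformly in $j$. Summing over the $p-s_0$ null and $s_0$ signal coordinates,
\begin{align*}
\textup{FDV}_\textup{d}\le(1+o(1))\left[(p-s_0)\frac{u}{p}+s_0\frac{u}{2p}\right]=(1+o(1))\,u\left(1-\frac{s_0}{2p}\right),
\end{align*}
which is \eqref{fdv}; the only delicate point is that these per-coordinate $o(1)$ terms be uniform in $j$, so that adding $p$ of them does not inflate the bound. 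Finally, for \eqref{fwer}, Markov's inequality gives $\textup{FWER}_\textup{d}=\mP\big(\sum_{j\in\widehat S}\mathbbm{1}\{\widehat{\sign}_j\neq\sign(\beta^0_j)\}\ge1\big)\le\mE\big[\sum_{j\in\widehat S}\mathbbm{1}\{\widehat{\sign}_j\neq\sign(\beta^0_j)\}\big]=\textup{FDV}_\textup{d}$, so the bound just proved yields $\limsup_{(n,p)\to\infty}\textup{FWER}_\textup{d}/\big(u(1-s_0/(2p))\big)\le1$; the hypothesis $0<u<1$ merely makes this a nontrivial probability statement.
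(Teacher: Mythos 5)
Your proposal is correct and takes essentially the same route as the paper: bound $\textup{FDV}_\textup{d}$ by per-coordinate tail probabilities at the deterministic cutoff $t_{\textup{FDV}}=G^{-1}(u/p)$, use the sign of the shift $\sqrt{n}\beta^0_j/\sqrt{\widehat{\Theta}_{jj}}$ to reduce each signal coordinate to a single opposite tail, sum to get $(2p-s_0)\,Q(t_{\textup{FDV}})(1+o(1))=u(1-\tfrac{s_0}{2p})(1+o(1))$, and obtain the FWER bound from the same sum of probabilities (your Markov step and the paper's union bound are the same inequality). The one imprecision is attributing the uniform ratio estimate $\mP(W_j\ge t)=(1+o(1))(1-\Phi(t))$ up to $t\asymp\sqrt{2\log p}$ to ``sub-Gaussian concentration'': that only yields an $e^{-t^2/2}$ tail, which overshoots the Gaussian tail by a factor of order $t$; the paper instead invokes a Cram\'er-type moderate-deviation result (Lemma 6.1 of Liu, 2013) for the leading sum $\check T_j$ and then transfers it to $T'_j$ via the $o_p(1/\sqrt{\log p})$ closeness lemma, exactly the remainder control you identify as the main obstacle.
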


Below we also give the rate of the statistical power of GMT procedure.

\begin{theorem} \label{power-thm}
Assume the conditions of Theorem {\ref{main-thm2}} hold. Additionally, we assume for $j \in S=\operatorname{supp}
\left(\boldsymbol{\beta}^{0}\right)$ and $s_0=|S|$,
$|\beta_j^0|>\sqrt{\frac{8\Theta_{jj}\log(p/s_0)}{n}}.$ Let $Q(t)=G(t)/2$.
Then, for pre-specified target $\textup{FDR}_\textup{d}$ level $\alpha$, with GMT procedure, we have
\begin{equation*}
\liminf_{n\rightarrow \infty} \frac{\textup{Power}_\textup{d}}{\frac{1}{s_0}\sum_{j\in S}
Q\left[G^{-1}(\frac{\alpha s_0}{p})-\frac{\sqrt{n}|\beta_j^0|}{\sqrt{\Theta_{jj}}}\right]}   \geq   1.
\end{equation*}
\end{theorem}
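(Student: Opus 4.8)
The plan is to follow the route of \cite{javanmard2019false}: bound the data-driven threshold $t_0$ from above by the deterministic surrogate $t^\ast:=G^{-1}(\alpha s_0/p)$ (up to a vanishing correction) and then lower-bound the power by comparing each $\mP(\widehat{\sign}_j=\sign(\beta_j^0))$ with the normal tail at that surrogate. Throughout set $\mu_j:=\sqrt n\,|\beta_j^0|/\sqrt{\Theta_{jj}}$, so that Proposition \ref{main-thm1} with $\boldsymbol{c}=\boldsymbol{e}_j$, together with the consistency $\widehat{\Theta}_{jj}/\Theta_{jj}\to1$ coming from the CLIME analysis, says that $\sign(\beta_j^0)\,T_j$ is close in law to $\mathcal N(\mu_j,1)$. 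The beta-min hypothesis $|\beta_j^0|>\sqrt{8\Theta_{jj}\log(p/s_0)/n}$ is equivalent to $\mu_j>\sqrt{8\log(p/s_0)}$, which exceeds $t^\ast\le\sqrt{2\log(p/(\alpha s_0))}$ by a margin that diverges uniformly in $j\in S$.

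First I would control $t_0$. Since $\mu_j-t^\ast\to\infty$ uniformly over $j\in S$, a Gaussian tail estimate gives $\mP(|T_j|<t^\ast)\lesssim s_0/p$, and a union bound over the $s_0=o(n^{1/4}/\log p)$ signals (Condition (C5)) shows that, with probability tending to $1$, every true signal is retained at threshold $t^\ast$, so $\sum_{j=1}^p\mathbbm{1}\{|T_j|\ge t^\ast\}\ge s_0$. Because $pG(t^\ast)=\alpha s_0$, the stopping criterion \eqref{t-fdr} then holds at $t=t^\ast$; a short case split on whether it is feasible on $[0,t_p]$ (using that $t_0\le\sqrt{2\log p}$ always, and that feasibility of $t^\ast$ forces $t^\ast>t_p$ in the other case) yields $t_0\le t^\ast+\eta_p$ on an event $\mathcal E$ with $\mP(\mathcal E)\to1$, where $\eta_p:=\sqrt{2\log p}-t_p\to0$.

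Next I would lower-bound the power. On $\mathcal E$ the sign-correct rejection event for $j\in S$ contains $\{\sign(\beta_j^0)\,T_j\ge t^\ast+\eta_p\}$, hence
\begin{equation*}
\textup{Power}_\textup{d}\ \ge\ \frac1{s_0}\sum_{j\in S}\Big[\,\mP\big(\sign(\beta_j^0)\,T_j\ge t^\ast+\eta_p\big)-\mP(\mathcal E^c)\,\Big].
\end{equation*}
A uniform-in-threshold Gaussian approximation for $T_j$ — the same ingredient assembled for the FDR analysis of Theorem \ref{main-thm2} — gives, uniformly over $j\in S$, $\mP(\sign(\beta_j^0)T_j\ge t^\ast+\eta_p)\ge Q(t^\ast+\eta_p-\mu_j)-\delta_n$ with $\delta_n\to0$; and since beta-min forces $t^\ast-\mu_j<0$, so that $Q(t^\ast-\mu_j)>\tfrac12$, the bound $|Q'|=\phi\le1/\sqrt{2\pi}$ gives $Q(t^\ast+\eta_p-\mu_j)\ge(1-2\eta_p/\sqrt{2\pi})\,Q(t^\ast-\mu_j)$. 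Averaging over $j\in S$,
\begin{equation*}
\textup{Power}_\textup{d}\ \ge\ (1-o(1))\,\frac1{s_0}\sum_{j\in S}Q\!\big(G^{-1}(\alpha s_0/p)-\mu_j\big)\ -\ \delta_n\ -\ \mP(\mathcal E^c),
\end{equation*}
and since beta-min also sends each $Q(t^\ast-\mu_j)\to1$, the reference average tends to $1$, the subtracted $o(1)$ terms are $o$ of it, and dividing through yields the claimed $\liminf\ge1$.

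The main obstacle is the pair of uniform tail estimates at the diverging level $t^\ast\asymp\sqrt{\log(p/s_0)}$: controlling $\mP(|T_j|<t^\ast)$ and $\mP(\sign(\beta_j^0)T_j\ge t^\ast+\eta_p)$ by standard normal tails simultaneously over all $j\in S$. This goes past the pointwise limit of Proposition \ref{main-thm1}, since one must bound the linearization remainder of $\widehat{\bbeta}^d$ and the fluctuation of $\widehat{\Theta}_{jj}$ against the growing threshold; it is also what delivers $\mP(\mathcal E^c)\to0$. Once these are available — and they are essentially the estimates already needed for the FDR control — the remainder is bookkeeping of $o(1)$ terms against the reference quantity, which is harmless precisely because beta-min pushes that quantity to $1$.
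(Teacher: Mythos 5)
Your proposal is correct and follows essentially the same route as the paper: the paper's Lemma \ref{power-lemma} bounds the data-driven threshold $t_0$ by a deterministic surrogate $\tilde t = G^{-1}\bigl(\tfrac{\alpha s_0}{p}(1-o(1))\bigr)$ with high probability (using the beta-min condition to show nearly all $s_0$ signals survive at that level, via a Markov-inequality count rather than your union bound — an immaterial difference here), and the power is then lower-bounded exactly as you describe, with the uniform Gaussian tail approximation supplied by Lemma \ref{cailemma7} together with the moderate-deviation bound of Lemma 6.1 of \cite{liu2013gaussian}, i.e.\ the same ingredients as in the FDR proof, and the residual $o(1)$ terms absorbed because beta-min keeps the reference average bounded away from zero. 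No gaps beyond what you already flag as reusable machinery.
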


\begin{corollary} \label{power-cor2}
Under conditions of Theorem \ref{power-thm}, if additionally we have, as $n,p \rightarrow  \infty $,
$\frac{\sqrt{n}\beta_{\min}}{\sqrt{\max_{j\in[p]}\Theta_{jj}}}
-\sqrt{2\log\left(\frac{2p}{\alpha s_0}\right)}
 \rightarrow  \infty.$ Then as $n,p \rightarrow  \infty $, we have $\textup{Power}_\textup{d} \rightarrow  1.$
\end{corollary}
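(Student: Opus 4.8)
The plan is to derive the corollary directly from the power lower bound of Theorem~\ref{power-thm} by showing that, under the extra signal-strength assumption, the quantity
\[
D_{n,p}:=\frac{1}{s_0}\sum_{j\in S} Q\!\left[G^{-1}\!\Big(\tfrac{\alpha s_0}{p}\Big)-\tfrac{\sqrt{n}\,|\beta_j^0|}{\sqrt{\Theta_{jj}}}\right]
\]
tends to $1$. Since every index in $\widehat S$ whose nonzero sign is correctly identified necessarily lies in $S$, the ratio inside $\textup{Power}_\textup{d}$ never exceeds $1$, so $\textup{Power}_\textup{d}\le 1$ and hence $\limsup \textup{Power}_\textup{d}\le 1$; combining this with the inequality $\liminf \textup{Power}_\textup{d}/D_{n,p}\ge 1$ from Theorem~\ref{power-thm} and $D_{n,p}\to 1$ forces $\textup{Power}_\textup{d}\to 1$.

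First I would bound $G^{-1}(\alpha s_0/p)$ from above. Writing $G(t)=2(1-\Phi(t))$ and using the standard Gaussian tail bound $1-\Phi(t)\le \tfrac{1}{2}e^{-t^2/2}$ for $t\ge 0$, the point $t^\ast=\sqrt{2\log(2p/(\alpha s_0))}$ satisfies $G(t^\ast)\le \alpha s_0/(2p)<\alpha s_0/p$. Note $\alpha s_0/p\in(0,1)$ for $n,p$ large, because $n\ll p$ and $s_0=o(n^{1/4})$ by (C5), so $G^{-1}$ is well defined there; being strictly decreasing, it gives $G^{-1}(\alpha s_0/p)<\sqrt{2\log(2p/(\alpha s_0))}$.

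Next, for every $j\in S$ we have $|\beta_j^0|\ge\beta_{\min}$ and, by (C8), $0<\Theta_{jj}\le\max_{k\in[p]}\Theta_{kk}$, hence
\[
G^{-1}\!\Big(\tfrac{\alpha s_0}{p}\Big)-\tfrac{\sqrt{n}\,|\beta_j^0|}{\sqrt{\Theta_{jj}}}\;<\;\sqrt{2\log\!\Big(\tfrac{2p}{\alpha s_0}\Big)}-\tfrac{\sqrt{n}\,\beta_{\min}}{\sqrt{\max_{k\in[p]}\Theta_{kk}}}\;=:\;M_{n,p},
\]
and $M_{n,p}\to-\infty$ by the hypothesis of the corollary. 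Because $Q$ is nonincreasing and continuous with $Q(t)\to 1$ as $t\to-\infty$, this bound — which is uniform in $j$ — gives $Q[\,\cdot\,]\ge Q(M_{n,p})$ for each $j\in S$, so $Q(M_{n,p})\le D_{n,p}\le 1$ and therefore $D_{n,p}\to 1$. Substituting into Theorem~\ref{power-thm} finishes the proof.

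This is essentially a quantitative corollary of Theorem~\ref{power-thm}, so I do not anticipate a real obstacle; the only step requiring care is making the Gaussian-quantile estimate $G^{-1}(\alpha s_0/p)\le\sqrt{2\log(2p/(\alpha s_0))}$ explicit and checking that the resulting control of the argument of $Q$ is uniform over the $s_0$ indices in $S$, both of which follow from monotonicity of $G$ and $Q$ together with the elementary sub-Gaussian tail inequality.
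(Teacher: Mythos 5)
Your proposal is correct and follows essentially the same route as the paper: both arguments bound $G^{-1}(\alpha s_0/p)$ above by $\sqrt{2\log(2p/(\alpha s_0))}$ via a Gaussian tail estimate, use monotonicity of $Q$ together with the signal-strength hypothesis to drive the argument of $Q$ to $-\infty$ uniformly over $j\in S$, and conclude from $Q(t)\to 1$ and Theorem~\ref{power-thm}. Your write-up is in fact slightly more careful than the paper's (explicit use of $\max_{j}\Theta_{jj}$, the uniformity over $S$, and the trivial upper bound $\textup{Power}_\textup{d}\le 1$ needed to upgrade $\liminf\ge 1$ to convergence), but the substance is identical.
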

Corollary \ref{power-cor2} states that if the signal-to-noise ratio $\frac{\beta_{\min}}{\sqrt{\max_{j\in[p]}\Theta_{jj}}} \gtrsim \sqrt{\frac{\log (p / s_0)}{n}}$, then the controlling procedure GMT is consistent in the aspect of power.

\subsection{Two-Sample Multiple Hypothesis Testing}
In some cases, we have two separate regression models of the same dimensions, and we are interested in testing the difference between their parameters.  In this section, we will give test procedures for these scenarios.

For $\ell =1,2$, we have mutually independent random design matrices $\bX^{(\ell)}=(\bx_{1}^{( \ell)}, \ldots, \bx_{n_{\ell}}^{(\ell)})^{T} \in \mathbb{R}^{n_{\ell} \times p}$ and response variable $\bY^{(\ell)}\in\mathbb{R}^{n_{\ell}}$, here $n_1\asymp n_2.$
For $i=1,\ldots,n_{\ell}$, similar to the definition in Section \ref{subsec:model}, assume $y_i^{\ell} - \mathbb E (y_i^{\ell}| \bx_i^{\ell}) \sim \mathrm{subG}(\sigma_i^2(({\bx_i^{(\ell)}})^T \bbeta^{\ell}))$ conditional on $\bx_i^{\ell}$ with variance $\ddot b(({\bx_i^{(\ell)}})^T \bbeta^{\ell})\le \sigma_i^2(({\bx_i^{(\ell)}})^T \bbeta^{\ell})$.

The tests of the regression coefficients of these two models are
\begin{equation*}
H_{0,j}:\beta_j^{(1)}=\beta_j^{(2)}\quad \text{vs} \quad H_{1,j}:\beta_j^{(1)}\neq \beta_j^{(2)},j\in[p].
\end{equation*}
Denote $\tilde{S} = \{j \in [p]:\left. \beta_{j}^{(1)} - \beta_{j}^{(2)} \neq 0\right\} \in [p]$. With a slight abuse of notation,  let $\widehat{S}$ again denote the selected subset of features in two-sample setting and $\widehat{\text{sign}}_j \in \{1,-1\}, j \in \widehat{S}$ denote the estimation of the sign of $(\beta_{j}^{(1)} - \beta_{j}^{(2)})$ in this section.

Under the two-sample settings, we give the definition of the directional FDR, directional FDV and directional FWER.

\begin{definition}\label{twosamplefdrdir}
Under the two-sample settings, the directional FDP/FDR, directional FDV and directional FWER are defined as
\begin{center}
$\textup{FDP}_{d}= \frac{|\{j\in \widehat{S}:\, \widehat{\sign}_j \neq \sign(\beta_{j}^{(1)} - \beta_{j}^{(2)})\}|}{|\widehat{S}|\vee 1},~\textup{FDR}_\textup{d}=\mE[ \textup{FDP}^d],$ and
\end{center}
$\textup{FDV}_\textup{d}=\mE[ \sum_{j\in \widehat{S}} \mathbbm{1}
\{\widehat{\sign}_j \neq \sign(\beta_{j}^{(1)} - \beta_{j}^{(2)})\} ]$, $\textup{FWER}_\textup{d}=\mP( \sum_{j\in \widehat{S}} \mathbbm{1}\{\widehat{\sign}_j \neq \sign(\beta_{j}^{(1)} - \beta_{j}^{(2)})\} \geq 1 ).$
\end{definition}

For each model, we first use the debiased estimator defined in \eqref{debia}, and, like \eqref{Theta}, we gives the estimation of $ \bTheta^{(\ell)}=(\bSigma^{(\ell)})^{-1}$, denoted by $\widehat{\bTheta}{}^{(\ell)}.$ Similar to \eqref{standard1}, we give the two sample test statistic (as the $t$-test form): $M_j =\frac{\widehat{\beta}^{(1)}_j-\widehat{\beta}^{(2)}_j}
{\sqrt{\widehat{\Theta}_{jj}^{(1)}/n_1+\widehat{\Theta}_{jj}^{(2)}/n_2}}.$

For the pre-specified target $\textup{FDR}_\textup{d}$ level $\alpha \in [0,1]$ and $t_p=\sqrt{2\log p-2\log\log p}$, we give the two-sample multiple hypothesis test procedure, named GMT2, that controls directional FDR.\\

\textbf{GMT2 Procedure.} Calculate the threshold
\begin{equation}\label{t-fdr2}
t_0 =\inf\bigg\{ 0\le t\le t_p: \frac{pG(t)}{ \sum_{j=1}^p \mathbbm{1}\{ |M_{j}| \ge t\} \vee 1   } \le \alpha  \bigg\}.
\end{equation}
If $t_0$ in \eqref{t-fdr2} does not exist, then set $t_0=\sqrt{2\log p}.$
For each $j\in[p]$, if $|M_j|\geq t_0$, then reject $H_{0,j}$. The sign of $M_j$ is taken as the estimator of $\sign(\beta_{j}^{(1)} - \beta_{j}^{(2)})$.

Analogous to $\text{GMT}_{\textup{FDV}}$ procedure, we also give the two-sample multiple hypothesis test process, named $\text{GMT2}_\textup{FDV}$ that controls FDV and FWER.\\

\textbf{$\text{GMT2}_{\textup{FDV}}$ Procedure.} For $\textup{FDV}_\textup{d}$ control, the tolerable number of $\textup{FDV}_\textup{d}$ is $u<p$. For $\textup{FWER}_\textup{d}$ control, the pre-specified target level is $0<u<1$. Denote $t_{\textup{FDV}}=G^{-1}\left(\frac{u}{p} \right).$ For each $j\in[p]$, if $|M_j|\geq t_{\textup{FDV}}$, then reject $H_{0,j}$. The sign of $M_j$ is taken as the estimator of $\sign(\beta^{(1)}_j-\beta^{(2)}_j)$.\\

Similar to the definition of $\bTheta$ in \eqref{Omega-0}, we define $\widetilde{\bTheta}=(\widetilde{\Theta}_{jk})_{1\le j, k\le p}$ by $\widetilde{\bTheta}=\frac{\bTheta^{(1)}/n_1+\bTheta^{(2)}/n_2}{1/n_1+1/n_2}.$ And the standardized matrix $\widetilde{\bTheta}{}^0=(\widetilde{\Theta}_{jk}^0)_{1\le j, k\le p}$ is defined as $\widetilde{\Theta}_{jk}^0 = \frac{\widetilde{\Theta}_{jk}}{\sqrt{\widetilde{\Theta}_{jj}\widetilde{\Theta}_{kk}}}.$ For any given constant $\gamma>1/2$, denote $\widetilde{\mathcal{A}}(\gamma) = \{(j,k): 1\leq j,k\leq p, |\widetilde{\Theta}_{jk}^0|\geq(\log p)^{-2-\gamma}\}$ and $\widetilde{\mathcal{B}}(\rho) =\{(j,k): 1\leq j,k\leq p, |\widetilde{\Theta}_{jk}^0|>\frac{1-\rho}{1+\rho}\}.$ The following condition {(C9)} is similar to condition {(C7)}.
\begin{itemize}
\item[{(C9)}] There exists constants $\gamma>1/2$ and $\rho\in (0,1/3)$ so that
$|\widetilde{\mathcal{A}}(\gamma)| =o(p^{1+\rho}), \quad
|\widetilde{\mathcal{B}}(\rho)|=O(p) .$
\end{itemize}

Similar to Remark under {(C8)}, when {(C8)} holds for both $\bSigma^{(1)}$ and $\bSigma^{(2)}$, then $0<1/c_2\leq\sigma_{\min}(\widetilde{\bTheta})\leq\sigma_{\max}(\widetilde{\bTheta}) \leq 1/c_1$, and for any $j\in[p]$, $\widetilde{\Theta}_{jj} \in [1/c_2,1/c_1].$ The main theorems of the two-sample test are given below.

\begin{theorem}\label{twosample-thm1}
Suppose the conditions of Proposition \ref{main-thm1} and {(C8)}, {(C9)} hold. For GMT2 procedure with pre-specified target level $\alpha$, we have, for any $\varepsilon>0$, $\lim_{(n,p) \rightarrow \infty} \mP( \frac{\textup{FDP}_\textup{d}}{\alpha(1- \frac{s_0}{2p})}\leq 1+\varepsilon )=1$. Further we have $\lim_{(n,p) \rightarrow \infty}  \frac{\textup{FDR}_\textup{d}}{\alpha(1- \frac{s_0}{2p})}\leq 1.$
\end{theorem}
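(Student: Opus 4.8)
The strategy is to mirror the proof of Theorem~\ref{main-thm2}, reading the two-sample statistic $M_j$ in place of $T_j$ and the difference support $\widetilde{S}=\{j:\beta_j^{(1)}\neq\beta_j^{(2)}\}$ in place of $S$ (so that $s_0$ is read here as $|\widetilde{S}|$, the sparsity of $\bbeta^{(1)}-\bbeta^{(2)}$); writing $\widetilde{S}_{\ge0}=\{j:\beta_j^{(1)}-\beta_j^{(2)}\ge0\}$ and $\widetilde{S}_{\le0}=\{j:\beta_j^{(1)}-\beta_j^{(2)}\le0\}$, set $\widetilde{G}(t)=\tfrac1p\{\sum_{j\in\widetilde{S}_{\ge0}}\mathbbm{1}(M_j\le-t)+\sum_{j\in\widetilde{S}_{\le0}}\mathbbm{1}(M_j\ge t)\}$. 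First I would establish a joint Gaussian approximation for $(M_1,\dots,M_p)$. Applying Proposition~\ref{main-thm1} to each sample separately with $\boldsymbol{c}=\boldsymbol{e}_j$ and the respective tuning parameters, and using the independence of $(\bX^{(1)},\bY^{(1)})$ and $(\bX^{(2)},\bY^{(2)})$ together with $n_1\asymp n_2$, one obtains a linearization $M_j = Z_j + \mu_j + o_p(1)$, where $\mu_j=(\beta_j^{(1)}-\beta_j^{(2)})/\sqrt{\widetilde{\Theta}_{jj}(1/n_1+1/n_2)}$ vanishes exactly on $\widetilde{S}^c$ and $(Z_1,\dots,Z_p)$ is, after studentizing by the consistent estimators $\widehat{\Theta}_{jj}^{(\ell)}$, approximately mean-zero jointly Gaussian with correlation matrix $\widetilde{\bTheta}{}^0$. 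The accuracy of this approximation in the moderate-deviation range $t\le t_p=\sqrt{2\log p-2\log\log p}$ comes from the sub-Gaussian assumption (the two-sample analogue of (C4)(i)) and the sparsity rate (C5)(ii), exactly as in the proof of Proposition~\ref{main-thm1}.

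Next I would show that the directional-error count $p\widetilde{G}(t)$ concentrates around $pG(t)(1-s_0/(2p))$ uniformly in $t$. From Step~1 and symmetry, $\mE[\mathbbm{1}(|M_j|\ge t)]=G(t)(1+o(1))$ for $j\notin\widetilde{S}$, while for $j\in\widetilde{S}$ the wrong-sign probability is at most $\Phi(-t)(1+o(1))=\tfrac12 G(t)(1+o(1))$ because $M_j$ has a mean shift of the correct sign; summing gives $\mE[p\widetilde{G}(t)]\le pG(t)(1-s_0/(2p))(1+o(1))$, with a matching lower bound up to the signal terms. The crucial computation is the variance bound $\var(p\widetilde{G}(t))=o((pG(t))^2)$, uniformly for $t\le t_p$: in $\sum_{j,k}\operatorname{Cov}(\cdot)$ the diagonal is $O(pG(t))$, and an off-diagonal pair $(j,k)$ has Gaussian correlation $\widetilde{\Theta}_{jk}^0$, so condition (C9), which bounds $|\widetilde{\mathcal{A}}(\gamma)|=o(p^{1+\rho})$ and $|\widetilde{\mathcal{B}}(\rho)|=O(p)$, controls both the number of non-negligible correlations and the number of near-unit ones. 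A discretization of $[0,t_p]$ on a sufficiently fine grid, together with the monotonicity of $t\mapsto\mathbbm{1}(|M_j|\ge t)$, upgrades this to
\[\sup_{0\le t\le t_p}\Big|\frac{p\widetilde{G}(t)}{pG(t)\,(1-s_0/(2p))}-1\Big|\stackrel{p}{\to}0 .\]
This is the two-sample counterpart of Proposition~5 in the Supplementary Material, itself an adaptation of the argument in \cite{javanmard2019false} to $\widetilde{\bTheta}{}^0$.

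Finally I would chain these facts with the definition of the data-driven threshold $t_0$ in \eqref{t-fdr2}. By construction $\textup{FDP}_\textup{d}=p\widetilde{G}(t_0)/\big(\sum_{j=1}^p\mathbbm{1}(|M_j|\ge t_0)\vee1\big)$. On the event that $t_0$ is attained in $[0,t_p]$, the defining inequality gives $pG(t_0)/\big(\sum_j\mathbbm{1}(|M_j|\ge t_0)\vee1\big)\le\alpha$, and combining with the uniform approximation of Step~2 yields $\textup{FDP}_\textup{d}\le\alpha(1-s_0/(2p))(1+o_p(1))$. On the complementary event $t_0=\sqrt{2\log p}$ one has $pG(t_0)\to0$ and, as in Theorem~\ref{main-thm2}, $\textup{FDP}_\textup{d}\to0$, so this case is asymptotically negligible. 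Hence $\lim_{(n,p)\to\infty}\mP(\textup{FDP}_\textup{d}/(\alpha(1-s_0/(2p)))\le1+\varepsilon)=1$ for every $\varepsilon>0$; since $\textup{FDP}_\textup{d}\le1$ almost surely, dominated convergence upgrades this to $\limsup_{(n,p)\to\infty}\textup{FDR}_\textup{d}/(\alpha(1-s_0/(2p)))\le1$.

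The step I expect to be the main obstacle is the variance bound of Step~2: one needs a bivariate moderate-deviation estimate for $\mP(|M_j|\ge t,\,|M_k|\ge t)$ that is sharp enough up to $t_p$ to be summed against the structural counts in (C9), and the pairs whose correlation is close to $\pm1$ (the set $\widetilde{\mathcal{B}}(\rho)$) must be treated separately, which is exactly where the assumption $\rho<1/3$ is used. Everything else is a transcription of the one-sample proof of Theorem~\ref{main-thm2} with $\bTheta$ and $S$ replaced by $\widetilde{\bTheta}$ and $\widetilde{S}$, plus the bookkeeping needed to splice together the two independent sample linearizations.
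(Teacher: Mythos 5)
Your proposal matches the paper's own argument: the paper likewise introduces the intermediate two-sample statistics $M'_j$, $\widetilde{M}_j$, $\check{M}_j$, verifies $\mE\check{M}_j=0$, $\mE\check{M}_j^2=1$, $\mE\check{M}_j\check{M}_k=\widetilde{\Theta}_{jk}^0$, and then repeats the proof of Theorem \ref{main-thm2} and Proposition \ref{GGt} verbatim with $S_{\le0}$, $S_{\ge0}$, $T'_j$, $\widetilde{T}_j$, $\check{T}_j$ replaced by their two-sample counterparts, which is exactly your linearization, covariance-structure, and threshold-chaining outline. No substantive difference in route.
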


\begin{theorem}\label{twosample-thm2}
Assume that the conditions of Proposition \ref{main-thm1} and {(C8)} holds for both models in the two-sample settings. Given the target $\textup{FDV}_\textup{d}$ level $u<p$, for $\text{GMT2}_{{FDV}}$ procedure we get $\lim_{(n,p) \rightarrow \infty}  \frac{\textup{FDV}_\textup{d}}{u(1- \frac{s_0}{2p})}\leq 1$.  Further for $0<u<1$ we have $\lim_{(n,p) \rightarrow \infty}  \frac{\textup{FWER}_\textup{d}}{u(1- \frac{s_0}{2p})}\leq 1.	$
\end{theorem}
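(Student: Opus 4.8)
The plan is to transcribe, for each of the two mutually independent samples separately, the debiasing analysis that produced Proposition~\ref{main-thm1}, combine the resulting asymptotically-linear expansions into a limit law for $M_j$, and then repeat the counting argument behind Theorem~\ref{main-thm3}. A key simplification is that $\textup{FDV}_\textup{d}$ and $\textup{FWER}_\textup{d}$ only involve \emph{marginal} rejection probabilities, so, unlike the FDR statement in Theorem~\ref{twosample-thm1}, no control on the dependence structure of $\widetilde{\bTheta}{}^0$ (no analog of (C9)) is needed here; only (C8) for both $\bSigma^{(1)},\bSigma^{(2)}$, together with the conditions of Proposition~\ref{main-thm1}, is used.

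First I would establish the two-sample asymptotic normality. For $\ell=1,2$, Theorem~\ref{lassobeta1} gives the $\ell_1$/$\ell_2$ oracle bounds for $\widehat{\bbeta}{}^{(\ell)}$, and, exactly as in the proof of Proposition~\ref{main-thm1}, the CLIME bounds yield
$$\sqrt{n_\ell}\big(\widehat{\bbeta}{}^{d,(\ell)}-\bbeta^{(\ell)}\big)=\sqrt{n_\ell}\,\widehat{\bTheta}{}^{(\ell)}\dot\ell^{(\ell)}(\bbeta^{(\ell)})+\mathbf r^{(\ell)},\qquad \big\|\mathbf r^{(\ell)}\big\|_\infty=o_p\big(1/\sqrt{\log p}\big),$$
the remainder being negligible at scale $1/\sqrt{\log p}$ because (C5) makes the $s_0\sqrt{(\log p)/n}$-type terms vanish fast enough. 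Since the two samples are independent and $n_1\asymp n_2$, the two leading linear terms are independent with variances $\asymp 1/n$; combining them, using (C8) (hence $\Theta^{(\ell)}_{jj}\in[1/c_2,1/c_1]$) and $\widehat{\Theta}{}^{(\ell)}_{jj}/\Theta^{(\ell)}_{jj}\to 1$ to studentize, gives for each $j\in[p]$ that $M_j\stackrel{d}{\to}\mathcal N(\mu_j,1)$ with $\mu_j=(\beta^{(1)}_j-\beta^{(2)}_j)\big/\sqrt{\Theta^{(1)}_{jj}/n_1+\Theta^{(2)}_{jj}/n_2}$, and $\mu_j=0$ under $H_{0,j}$. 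More is true: a uniform moderate-deviation approximation $\mP(M_j\ge t)=\tfrac12 G(t)(1+o(1))$ holds uniformly over $j$ with $\mu_j\le 0$ and over $0\le t\le t_p\asymp\sqrt{2\log p}$ (symmetrically for the left tail), since (C5)(i) gives $t_p=o(n^{r/2})=o(n^{1/6})$, which is inside the Cramér moderate-deviation range for the (sub-Gaussian, bounded-weight) linear statistic, with constants made uniform in $j$ by (C8).

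Next, since $N:=\sum_{j\in\widehat S}\mathbbm 1\{\widehat{\sign}_j\neq\sign(\beta^{(1)}_j-\beta^{(2)}_j)\}$ is a nonnegative integer, Markov's inequality gives $\textup{FWER}_\textup{d}=\mP(N\ge 1)\le\mE N=\textup{FDV}_\textup{d}$, so it suffices to bound $\textup{FDV}_\textup{d}$. For the $\text{GMT2}_{\textup{FDV}}$ procedure, $\widehat S=\{j:|M_j|\ge t_{\textup{FDV}}\}$ with $G(t_{\textup{FDV}})=u/p$, so a directional false discovery at $j$ occurs iff $M_j\le -t_{\textup{FDV}}$ when $\beta^{(1)}_j-\beta^{(2)}_j\ge0$ or $M_j\ge t_{\textup{FDV}}$ when $\beta^{(1)}_j-\beta^{(2)}_j\le0$. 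Setting $S^{d}_{\ge0}=\{j:\beta^{(1)}_j-\beta^{(2)}_j\ge0\}$, $S^{d}_{\le0}=\{j:\beta^{(1)}_j-\beta^{(2)}_j\le0\}$, linearity of expectation yields
$$\textup{FDV}_\textup{d}=\sum_{j\in S^{d}_{\ge0}}\mP\big(M_j\le -t_{\textup{FDV}}\big)+\sum_{j\in S^{d}_{\le0}}\mP\big(M_j\ge t_{\textup{FDV}}\big).$$
For $j\in S^{d}_{\ge0}$ one has $\mu_j\ge0$, so the $\mathcal N(0,1)$ tail dominates the left tail and the uniform approximation gives $\mP(M_j\le -t_{\textup{FDV}})\le\tfrac12 G(t_{\textup{FDV}})(1+o(1))=\tfrac{u}{2p}(1+o(1))$, uniformly in $j$, and symmetrically for the second sum. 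Because $S^{d}_{\ge0}\cup S^{d}_{\le0}=[p]$ and $S^{d}_{\ge0}\cap S^{d}_{\le0}=\{j:\beta^{(1)}_j=\beta^{(2)}_j\}$ has cardinality $p-s_0$ (with $s_0=|\widetilde S|$), one has $|S^{d}_{\ge0}|+|S^{d}_{\le0}|=2p-s_0$, hence $\textup{FDV}_\textup{d}\le (2p-s_0)\tfrac{u}{2p}(1+o(1))=u(1-\tfrac{s_0}{2p})(1+o(1))$, i.e. $\limsup \textup{FDV}_\textup{d}/\{u(1-s_0/(2p))\}\le1$; with the Markov step this also gives $\limsup \textup{FWER}_\textup{d}/\{u(1-s_0/(2p))\}\le1$ when $0<u<1$. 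The hard part is the uniform moderate-deviation step of the second paragraph: showing that the studentized two-sample statistic $M_j$ tracks the Gaussian tail $\tfrac12 G(t)$ up to a factor $1+o(1)$ simultaneously for all $j$ at thresholds of order $\sqrt{2\log p}$. This requires propagating both the debiased-Lasso remainder $\mathbf r^{(\ell)}$ and the CLIME estimation error through each sample and verifying, via (C5), that they are $o_p(1/\sqrt{\log p})$, so that displacing the threshold by their size perturbs the tail only by $1+o(1)$; the remainder of the argument is a routine two-sample transcription of the proof of Theorem~\ref{main-thm3}.
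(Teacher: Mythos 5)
Your proposal is correct and follows essentially the same route as the paper: reduce $M_j$ to the centered, exactly-standardized linear statistic $\check M_j$ (with $\max_j$-errors $o_p(1/\sqrt{\log p})$ coming from the debiased-Lasso remainder and the CLIME error), apply the uniform moderate-deviation approximation $\mP(\check M_j\ge t)=Q(t)(1+o(1))$ for $0\le t\le 2\sqrt{\log p}$, use the sign of $\beta_j^{(1)}-\beta_j^{(2)}$ to bound each marginal misdirection probability by $\tfrac{u}{2p}(1+o(1))$, and count $|S_{\ge 0}|+|S_{\le 0}|=2p-s_0$; your Markov bound $\mP(N\ge1)\le\mE N$ for FWER is just the paper's union bound in another guise. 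Your observation that no analog of (C9) is needed for FDV/FWER matches the theorem's hypotheses.
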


Like the ordinary multiple testing setting (i.e., the setting discussed in the last section), Theorem \ref{twosample-thm1} and Theorem \ref{twosample-thm2} indicate that GMT2 and $\text{GMT2}_{\textup{FDV}}$ can asymptotically control directional FDR/FDP and FDV/FWER respectively under two-sample settings.

\section{Numerical Experiments}

In order to examine the performance of the proposed methods, we conduct experiments using GMT in the simulations. Under two commonly used GLMs --- logistic regression and Poisson regression, the principal evaluation criteria are directional FDR and statistical power.

\subsection{Logistic Regression}

Consider the logistic regression model, where each row $\bx_i$ of the random design matrix $\boldsymbol{X} \in \mathbb{R}^{n \times p}$ is i.i.d. taken from a uniform distribution $\rm{U}(- 1, 1)$.  The number of non-zero elements of the true regression coefficient $\boldsymbol{\beta}^0 \in \mathbb{R}^{p}$ is $s_0$. Given the \emph{signal magnitude} $A>0$,
set the first $s_0$ dimensions of $\boldsymbol{\beta}^0$ are selected from $\{\pm A\}$, with half positive and half negative values, and the last $p - s_0$ dimensions are taken as 0, i.e.
${\beta _0} = (\underbrace { \pm A, \cdots , \pm A}_{{s_0}},0, \cdots ,0).$

The response variable $y_i, i \in [n]$ comes from Bernoulli distributions with the mean value $\frac{\exp (\bx_i^T \boldsymbol{\beta}^0)}{1 + \exp (\bx_i^T \boldsymbol{ \beta}^0)}.$ We compare 3 methods: 1 The GMT method proposed in this article; 2. The LMT method is in \cite{ma2020global}; 3. The knockoff method is in \cite{barber2015controlling}.

GMT and LMT methods involve Lasso estimation. For the penalty coefficient $\lambda$ used in the Lasso method, we use a 5-fold cross-validation to select $\lambda$ such that the error obtained in cross-validation is the smallest. For GMT method, the precision matrix needs to be estimated. We use CLIME with the optimal tuning parameter $\lambda_n$ selected by  2-fold cross-validation method. We select $p = 600$, $n \in \{400, 600, 800, 1000, 1200\}$, sparsity level $s_0 = 50$, non-zero signal magnitude $A = 0.5$, and the target directional FDR level $\alpha = 0.2$.
The experiment is repeated 100 times under all settings.

The results are shown in Figure \ref{logitfdrpower}. The empirical directional FDR of the three methods do not exceed the pre-specified level 0.2. The GMT method in this article has similar performance with the LMT method. When $n$ is large, the statistical power of GMT method is relatively higher than that of the knockoff method.

\begin{figure}[htbp]
    \centering
    \includegraphics[width = 1.05\textwidth]{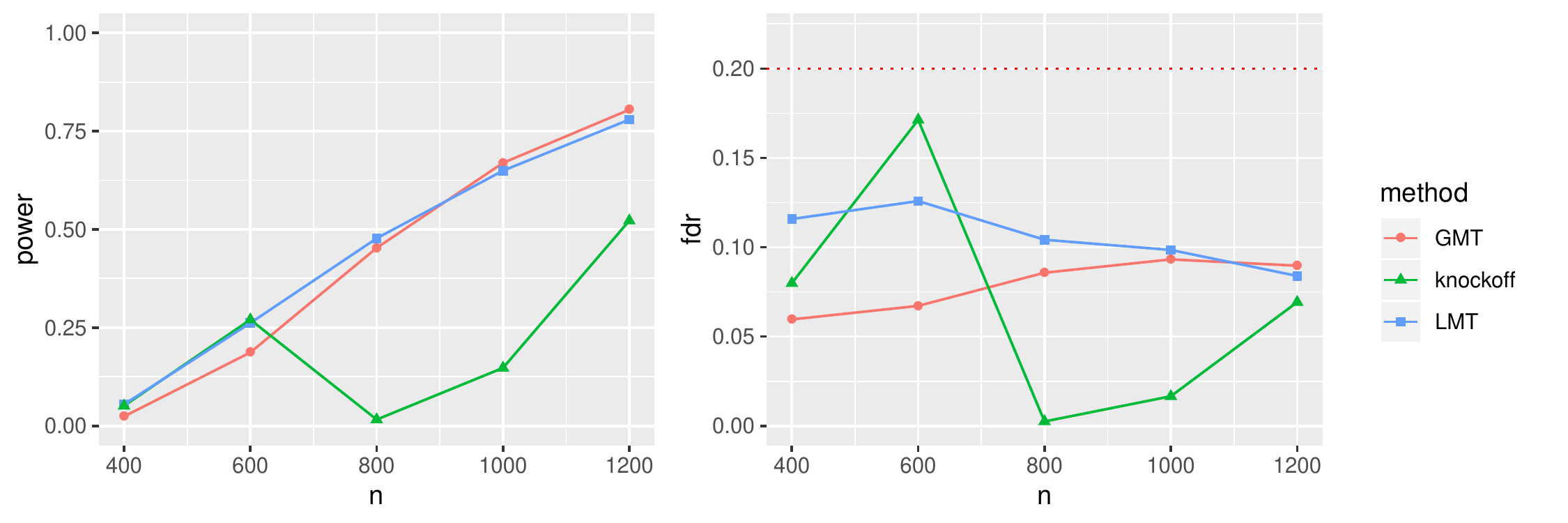}
    \caption{The empirical directional FDR and power of logistic regression. We set $p = 600$, $s_0 = 50$, non-zero signal strength $A = 0.5$. The number of trials is 100.}
    \label{logitfdrpower}
\end{figure}

\subsection{Poisson Regression}

For Poisson regression, each row of the random design matrix $\boldsymbol{X} \in \mathbb{R}^{n \times p}$ independently comes from a uniform distribution $\rm{U}(-0.6, 0.6)$. The distribution of the true regression coefficient $\boldsymbol{\beta}_0$ is the same as the logistic regression part, with the non-zero signal strength $A = 0.2$. The $y_i$ comes from a conditional Poisson distribution with the conditional mean $\exp (x_i^T \boldsymbol{\beta }_0)$.

As LMT procedure is only proposed for logistic regression, in the part we only compare GMT and knockoff methods.
Since the link function of Poisson regression is an exponential function with a high degree of non-linearity, a relatively large sample size is required to obtain a relatively ideal result for both methods. Therefore, in the figure \ref{poissonfdrpower} we show the results of $n = 1000$ and $n = 2000$ respectively with $p = 300$ and non-zero signal strength $A = 0.2$. The empirical directional FDR of the two methods is controlled below 0.2, and the GMT method is relatively more effective than the knockoff method in the aspect of the statistical power.

\begin{figure}[htbp]
    \centering
    \includegraphics[width = 1.05\textwidth]{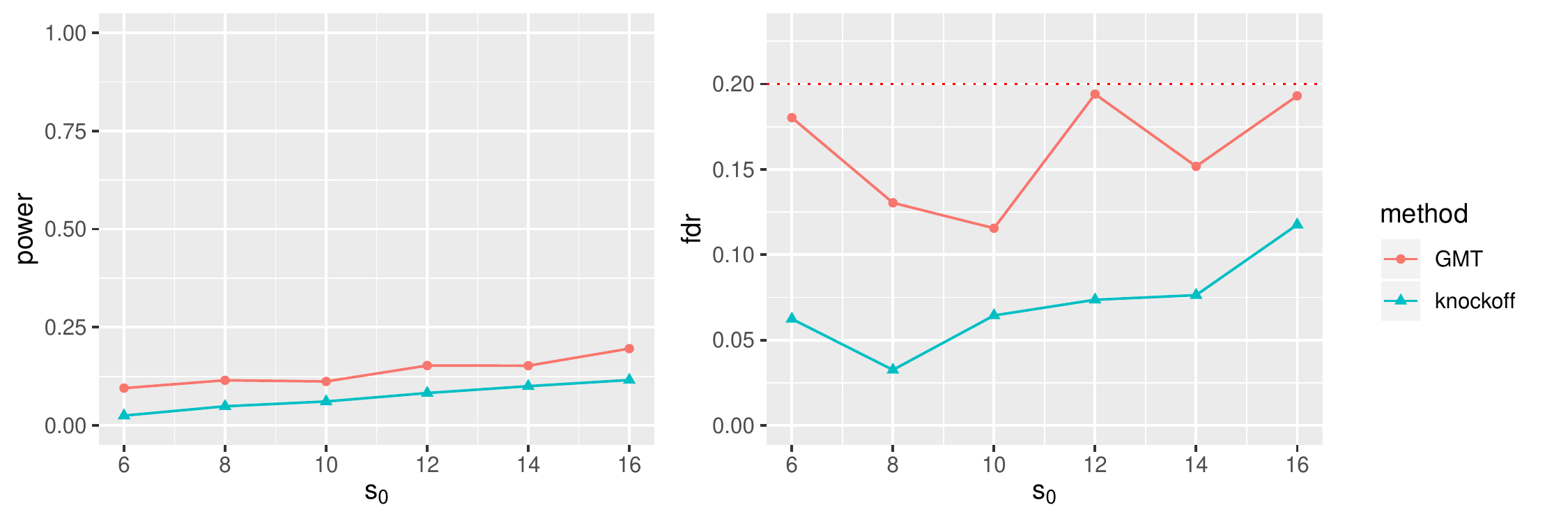}
    \includegraphics[width = 1.05\textwidth]{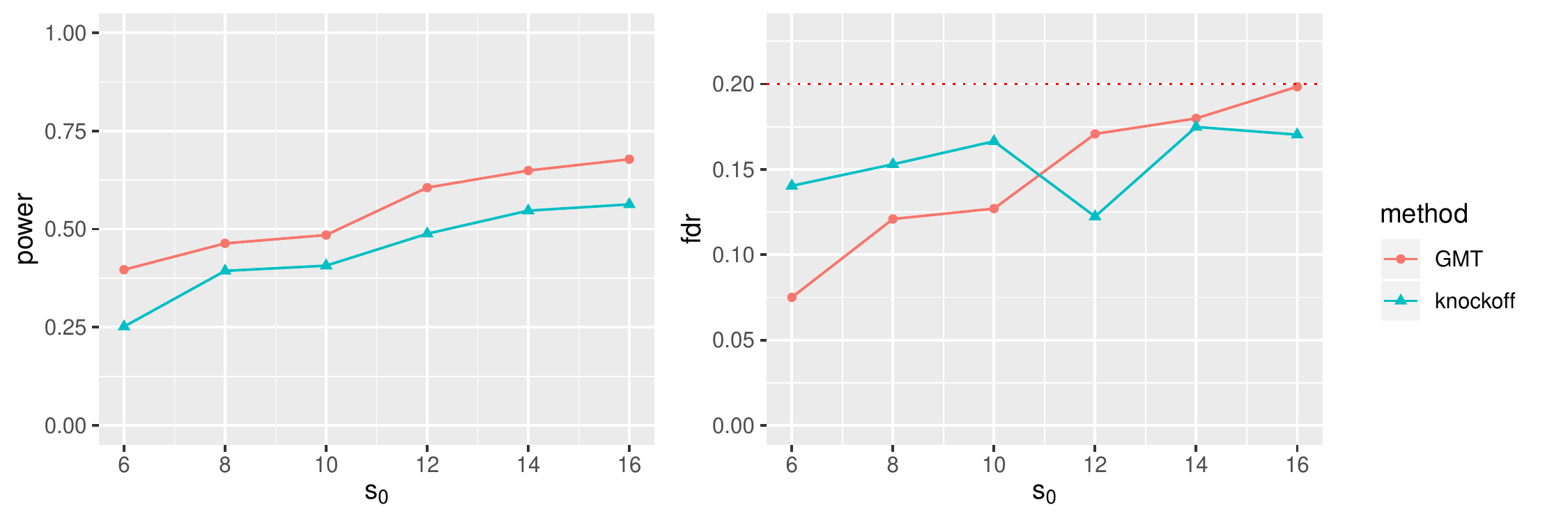}
    \caption{The empirical directional FDR and power of Poisson regression numerical experiments. The upper two plots are the results for $n = 1000$ and the lower two are for $n = 2000$. The number of trials is 100.}
    \label{poissonfdrpower}
\end{figure}


\section*{Acknowledgement}
The authors are in alphabetical order contributed equally to this work. This work is supported in part by the University of Macau under UM Macao Talent Programme (UMMTP-2020-01).

\bibliographystyle{apalike}
\bibliography{Directional_FDR}

\newpage
\appendix

\begin{center}
{\huge Supplementary Materials:}
\end{center}
\begin{center}
{\large Directional FDR Control for Sub-Gaussian Sparse GLMs}
\end{center}

\setcounter{table}{0}   
\renewcommand{\theequation}{\thesection.\arabic{equation}}
\renewcommand{\thelemma}{\thesection.\arabic{lemma}}
\renewcommand{\thetheorem}{\thesection.\arabic{theorem}}
\section{Some useful lemmas}
The following lemmas are the concentration inequalities for sub-Gaussian and bounded random variables, see \cite{zhang2020concentration} and references therein.
\begin{lemma}[Proposition 3.2 in \cite{zhang2020concentration}]\label{rancon}
Consider the random variables $\{Y_i\}_{i = 1}^n$ are independent distributed as $\{{\rm{subG}}(\sigma_i^2)\}_{i = 1}^n$ with $C_b^2 := \max_{1 \leq i \leq n} \sigma_{i}^2  < \infty$. Let $\bm w := ({w_1}, \cdots ,{w_n})^T \in {\mathbb{R}^n}$ be a non-random vector and define $S_n^w = :\sum_{i = 1}^n {{w_i}{Y_i}} $. Then,
\[P( {|S_n^W - {\mE}S_n^W| \ge t} ) \le  2e^{-{ t^{2}}/({2C_{ b}^{2}\|\bm w\|_{2}^{2}})}.\]
\end{lemma}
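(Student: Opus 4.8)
The plan is to apply the Cram\'er--Chernoff (exponential moment) method. First I would pass to the centered, weighted summands: since the assertion involves $S_n^w-\mathbb{E}S_n^w=\sum_{i=1}^n w_i(Y_i-\mathbb{E}Y_i)$, I may replace each $Y_i$ by $Z_i:=Y_i-\mathbb{E}Y_i$, which is mean-zero and still $\mathrm{subG}(\sigma_i^2)$, so that $\mathbb{E}e^{sZ_i}\le e^{\sigma_i^2 s^2/2}$ for all $s\in\mathbb{R}$. Because the weights $w_i$ are deterministic, multiplication by $w_i$ merely rescales the variance proxy: $\mathbb{E}e^{s w_i Z_i}=\mathbb{E}e^{(sw_i)Z_i}\le e^{\sigma_i^2 w_i^2 s^2/2}$, i.e. $w_iZ_i\sim\mathrm{subG}(\sigma_i^2 w_i^2)$.

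Next I would use independence of the $Y_i$ (hence of the $w_iZ_i$) to factorize the moment generating function of the sum, bound each factor by the per-term estimate, and invoke $\sigma_i^2\le C_b^2$:
\[
\mathbb{E}\,e^{s(S_n^w-\mathbb{E}S_n^w)}=\prod_{i=1}^n \mathbb{E}\,e^{s w_i Z_i}\le\prod_{i=1}^n e^{\sigma_i^2 w_i^2 s^2/2}\le \exp\!\Big\{\tfrac{1}{2}C_b^2\|\bm w\|_2^2\, s^2\Big\},\qquad\forall\, s\in\mathbb{R}.
\]
In particular $S_n^w-\mathbb{E}S_n^w$ is itself $\mathrm{subG}\big(C_b^2\|\bm w\|_2^2\big)$, so what remains is the standard sub-Gaussian tail bound.

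Finally, for any $t>0$ and $s>0$, Markov's inequality applied to $e^{s(S_n^w-\mathbb{E}S_n^w)}$ gives $\mathbb{P}(S_n^w-\mathbb{E}S_n^w\ge t)\le \exp\{-st+\tfrac12 C_b^2\|\bm w\|_2^2 s^2\}$; taking $s=t/(C_b^2\|\bm w\|_2^2)$ minimizes the exponent and yields $\mathbb{P}(S_n^w-\mathbb{E}S_n^w\ge t)\le e^{-t^2/(2C_b^2\|\bm w\|_2^2)}$. Applying the same bound with $\bm w$ replaced by $-\bm w$ (equivalently, to $-(S_n^w-\mathbb{E}S_n^w)$) controls the lower tail, and a union bound over the two one-sided events produces the factor $2$ and the claimed two-sided inequality. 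I do not anticipate any genuine obstacle: the only points needing a word of care are the reduction to centered summands so that the definition of $\mathrm{subG}$ applies, and the observation that independence makes the variance proxies add; everything else is the textbook Chernoff computation.
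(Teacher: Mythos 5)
Your proof is correct: the reduction to centered summands, the rescaling of the variance proxy by $w_i^2$, the factorization of the MGF by independence, and the optimized Chernoff bound with the symmetric lower-tail argument together give exactly the stated inequality. The paper does not prove this lemma itself (it is imported verbatim as Proposition 3.2 of the cited reference), and your argument is the standard one that result rests on, so there is nothing further to reconcile.
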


\begin{lemma}[Hoeffding's inequality, Corollary 2.1 in \cite{zhang2020concentration}]\label{hoeffd}
Let $X_1,\ldots,X_n$ be independent centered random variables with
$$\mE X_i=0,a_i\leq X_i \leq b_i,i=1,\ldots,n.$$
Then we have
\begin{itemize}
\item[{(a)}] Hoeffding's Lemma: for $u>0$, we have
$\mE \exp\left\{u\sum_{i=1}^n X_i\right\} \leq
\exp \left \{ \frac{1}{8}u^2\sum_{i=1}^n(b_i-a_i)^2 \right\}$.

\item[{(b)}] Hoeffding's inequality: for $t>0$, we have
$\mP \left(\left|\sum_{i=1}^n X_i\right|\geq t \right) \leq
2 \exp \left\{\frac{-2t^2}{\sum_{i=1}^n (b_i-a_i)^2} \right\}$.
\end{itemize}
\end{lemma}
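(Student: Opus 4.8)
The plan is to establish part (a) by a convexity (interpolation) argument for a single summand and then tensorise via independence, and to obtain part (b) from (a) by the standard Chernoff optimisation. First I would fix an index $i$ and write $X=X_i$, $a=a_i$, $b=b_i$ with $\mE X=0$ and $a\le X\le b$. Using convexity of $x\mapsto e^{ux}$ on $[a,b]$, for every $x\in[a,b]$ we have $e^{ux}\le \frac{b-x}{b-a}e^{ua}+\frac{x-a}{b-a}e^{ub}$; taking expectations and using $\mE X=0$ gives $\mE e^{uX}\le \frac{b}{b-a}e^{ua}-\frac{a}{b-a}e^{ub}$. Setting $p:=-a/(b-a)\in[0,1]$ and $h:=u(b-a)$, the right-hand side equals $e^{\phi(h)}$ with $\phi(h):=-ph+\log(1-p+pe^h)$.

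Next I would Taylor-expand $\phi$ at $0$: one checks $\phi(0)=0$, $\phi'(0)=0$, and $\phi''(h)=\frac{p(1-p)e^h}{(1-p+pe^h)^2}$. Writing $q:=\frac{pe^h}{1-p+pe^h}\in(0,1)$, this derivative equals $q(1-q)\le 1/4$, so by Taylor's theorem $\phi(h)\le h^2/8$, i.e. $\mE e^{uX_i}\le e^{u^2(b_i-a_i)^2/8}$. By independence of the $X_i$, $\mE\exp\{u\sum_{i=1}^n X_i\}=\prod_{i=1}^n\mE e^{uX_i}\le \exp\{\tfrac18 u^2\sum_{i=1}^n(b_i-a_i)^2\}$, which is (a).

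For (b), I would apply Markov's inequality to $e^{u\sum_i X_i}$ with $u>0$: $\mP(\sum_i X_i\ge t)\le e^{-ut}\,\mE e^{u\sum_i X_i}\le \exp\{-ut+\tfrac18 u^2\sum_i(b_i-a_i)^2\}$. Minimising the exponent over $u>0$ at $u^*=4t/\sum_i(b_i-a_i)^2$ yields $\mP(\sum_i X_i\ge t)\le \exp\{-2t^2/\sum_i(b_i-a_i)^2\}$. Running the identical argument with $-X_i$ in place of $X_i$ (their bounds $-b_i\le -X_i\le -a_i$ have the same width) controls $\mP(\sum_i X_i\le -t)$, and a union bound supplies the factor $2$ in the two-sided statement.

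The only mildly delicate point is the uniform estimate $\phi''\le 1/4$, which is exactly what produces the sharp constant $\tfrac18$; the rest is routine. Since this is a classical inequality reproduced here only because it underlies the concentration bounds used throughout the paper, no genuine obstacle arises.
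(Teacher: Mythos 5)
Your proof is correct: the convexity interpolation, the computation $\phi''(h)=q(1-q)\le 1/4$ yielding $\phi(h)\le h^2/8$, the tensorisation by independence, and the Chernoff optimisation at $u^*=4t/\sum_i(b_i-a_i)^2$ followed by a union bound are all carried out accurately and give exactly the stated constants. The paper itself offers no proof of this lemma --- it is quoted verbatim as Corollary 2.1 of the cited reference --- so your argument simply supplies the standard classical derivation, and there is nothing to compare against or object to.
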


\begin{lemma}[Corollary 7.5 in \cite{zhang2020concentration}]\label{maxi}
Let $\{X_{i}\}_{i=1}^n$ be independent r.vs on  $\mathcal{X}$ and $\{f_{i}\}_{i=1}^n$ be real-valued functions on $\mathcal{X}$ which satisfy
$\mE f_{j}(X_{i})=0,~\vert f_{j}(X_{i})\vert \le a_{ij}$ for all $j=1,...,p$ and all $i=1,...,n$. Then
\begin{center}
$\mE( \underset{1\le j\le p}{\max}| \sum\limits_{i=1}^{n}f_{j}(X_{i})|) \le [2\log(2p)]^{1 / 2}\underset{1\le j\le p}{\max}(\sum\limits_{i=1}^{n}a_{ij}^{2})^{1 / 2}.$
\end{center}
\end{lemma}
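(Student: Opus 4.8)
The plan is to prove this maximal inequality by the classical exponential-moment (Chernoff) method, the only probabilistic input being Hoeffding's lemma, which is already available as Lemma~\ref{hoeffd}(a). Write $Z_j := \sum_{i=1}^n f_j(X_i)$ for $j\in[p]$ and set $\bar a := \max_{1\le j\le p}\big(\sum_{i=1}^n a_{ij}^2\big)^{1/2}$, so that the claim is $\mE\big(\max_j |Z_j|\big)\le \sqrt{2\log(2p)}\,\bar a$. The strategy is to bound the moment generating function of $\max_j |Z_j|$ uniformly, linearize the expectation of the maximum via Jensen, and then optimize over the free exponential parameter.

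First I would linearize: for any $t>0$, convexity of $x\mapsto e^{tx}$ and Jensen's inequality give
\[
\exp\!\big(t\,\mE[\max_j |Z_j|]\big)\ \le\ \mE\exp\!\big(t\max_j |Z_j|\big).
\]
Next, since $\max_j|Z_j|=|Z_{j^\ast}|$ for some (random) index $j^\ast$ and all the exponentials below are nonnegative, $e^{t\max_j|Z_j|}\le e^{tZ_{j^\ast}}+e^{-tZ_{j^\ast}}\le\sum_{j=1}^p\big(e^{tZ_j}+e^{-tZ_j}\big)$, whence
\[
\mE\exp\!\big(t\max_j |Z_j|\big)\ \le\ \sum_{j=1}^p\big(\mE e^{tZ_j}+\mE e^{-tZ_j}\big).
\]
For each fixed $j$, independence of $X_1,\dots,X_n$ and Hoeffding's lemma applied to the centered variable $f_j(X_i)\in[-a_{ij},a_{ij}]$ (an interval of length $2a_{ij}$) yield
\[
\mE e^{tZ_j}=\prod_{i=1}^n\mE e^{t f_j(X_i)}\le\prod_{i=1}^n\exp\!\Big(\tfrac{t^2(2a_{ij})^2}{8}\Big)=\exp\!\Big(\tfrac{t^2}{2}\sum_{i=1}^n a_{ij}^2\Big)\le\exp\!\Big(\tfrac{t^2\bar a^2}{2}\Big),
\]
and the identical bound holds for $\mE e^{-tZ_j}$ by symmetry.

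Combining the three displays gives $\exp\!\big(t\,\mE[\max_j |Z_j|]\big)\le 2p\exp(t^2\bar a^2/2)$, hence $\mE[\max_j |Z_j|]\le \frac{\log(2p)}{t}+\frac{t\bar a^2}{2}$ for every $t>0$. Taking $t=\sqrt{2\log(2p)}/\bar a$ balances the two terms and produces the bound $\mE[\max_j|Z_j|]\le\sqrt{2\log(2p)}\,\bar a$, which is exactly the asserted inequality. The argument is essentially mechanical; the only points that require care are the passage to $2p$ summands (so that $\log(2p)$, rather than $\log p$, appears in the final constant) and invoking Hoeffding's lemma with the correct interval length $2a_{ij}$ — there is no substantive obstacle beyond this bookkeeping.
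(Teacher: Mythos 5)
Your proof is correct. The paper does not prove this lemma at all --- it imports it verbatim as Corollary 7.5 of \cite{zhang2020concentration} --- so there is no in-paper argument to compare against; what you have written is the standard Chernoff/exponential-moment derivation of the sub-Gaussian maximal inequality (Jensen, union of $2p$ exponential moments, Hoeffding's lemma with interval length $2a_{ij}$, optimization at $t=\sqrt{2\log(2p)}/\bar a$), and every step, including the bookkeeping that produces $\log(2p)$ rather than $\log p$, checks out. The only cosmetic caveat is that the paper's Lemma~\ref{hoeffd}(a) is stated for $u>0$ and for the sum rather than a single summand, so strictly you apply the one-variable Hoeffding lemma to each $\pm f_j(X_i)$ and multiply by independence, exactly as you indicate via ``by symmetry.''
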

%

The following two lemmas give the properties of the function $G(t)$.

\begin{lemma}[Lemma 7.1 in \cite{javanmard2019false}]\label{Gt1}
 Let $G(t)=2(1-\Phi(t))$. For all $t\geq0$, we have
\begin{equation*}
\frac{2}{t+1/t} \phi(t)< G(t) <\frac{2}{t}\phi(t),
\end{equation*}
where $\phi(t) = e^{-t^2/2}/\sqrt{2\pi}$ is the standard Gaussian density. We further have
\begin{equation*}
\lim_{t\rightarrow \infty} \frac{G(t)}{\sqrt{\frac{2}{\pi}}\frac{e^{-t^2/2}}{t}}=1.
\end{equation*}
\end{lemma}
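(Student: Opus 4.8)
The plan is to establish the two-sided bound by elementary calculus on the Gaussian density and then deduce the asymptotic equivalence by squeezing between those bounds. First I would record the identities $G(t) = 2\int_t^\infty \phi(s)\,ds$ and $\int_t^\infty s\phi(s)\,ds = \phi(t)$, the latter because $s\phi(s) = -\phi'(s)$; these are the only ingredients needed for the upper bound.

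For the \emph{upper} bound, for $s > t > 0$ I would insert $1 < s/t$ under the integral sign to get $\int_t^\infty \phi(s)\,ds < t^{-1}\int_t^\infty s\phi(s)\,ds = \phi(t)/t$, hence $G(t) < 2\phi(t)/t$; the case $t=0$ is vacuous since the right-hand side is infinite. For the \emph{lower} bound I would introduce the auxiliary function $g(t) := 1-\Phi(t) - \frac{t}{t^2+1}\,\phi(t)$ on $[0,\infty)$, differentiate using $\phi'(t) = -t\phi(t)$, and verify that after simplification $g'(t) = -2\phi(t)/(t^2+1)^2 < 0$, so $g$ is strictly decreasing. Since $g(t)\to 0$ as $t\to\infty$ (both $1-\Phi(t)$ and $\frac{t}{t^2+1}\phi(t)$ vanish), monotonicity forces $g(t) > 0$ for every finite $t\ge 0$, which is precisely $1-\Phi(t) > \frac{\phi(t)}{t+1/t}$; doubling gives the claimed lower bound on $G(t)$.

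Finally, for the limit I would observe that $\sqrt{2/\pi}\,e^{-t^2/2}/t = 2\phi(t)/t$, so the two bounds just obtained yield $\frac{t^2}{t^2+1} < G(t)\big/\big(2\phi(t)/t\big) < 1$; since $t^2/(t^2+1)\to 1$ as $t\to\infty$, the squeeze theorem gives $\lim_{t\to\infty} G(t)\big/\big(\sqrt{2/\pi}\,e^{-t^2/2}/t\big) = 1$.

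I expect no serious obstacle. The one step demanding genuine care is the derivative computation for $g$: the rational factor $t/(t^2+1)$ is chosen exactly so that the numerator of $g'$ collapses to the constant $2$ (one finds $(t^2+1)^2 + (1 - 2t^2 - t^4) = 2$), and keeping that cancellation straight — rather than the messier algebra a less fortunate choice of comparison function would produce — is the crux of the argument.
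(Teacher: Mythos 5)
Your proof is correct and complete: the upper bound via the substitution $1<s/t$ under the tail integral, the lower bound via the monotone auxiliary function $g(t)=1-\Phi(t)-\tfrac{t}{t^2+1}\phi(t)$ with $g'(t)=-2\phi(t)/(t^2+1)^2$ and $g(\infty)=0$, and the squeeze argument for the limit all check out (including the algebraic collapse of the numerator to $2$). The paper itself offers no proof to compare against --- it simply imports the lemma as Lemma 7.1 of \cite{javanmard2019false} --- and what you have written is the standard Mills-ratio argument, so there is nothing further to reconcile.
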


\begin{lemma}[Proposition 1 in \cite{ma2020global}]\label{Gt2}
 For $0\leq t \leq \sqrt{2\log p}$, uniformly we have
\begin{equation*}
\frac{G(t+o(1/\sqrt{\log p}))}{G(t)}=1+o(1).
\end{equation*}
\end{lemma}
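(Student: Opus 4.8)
I interpret the claim as: for any array $\delta(t,p)$ with $\Delta_p:=\sup_{0\le t\le\sqrt{2\log p}}|\delta(t,p)|$ satisfying $\Delta_p\sqrt{\log p}\to 0$, we have $\sup_{0\le t\le\sqrt{2\log p}}\bigl|G(t+\delta(t,p))/G(t)-1\bigr|\to 0$. The plan is to split the interval $[0,\sqrt{2\log p}]$ at a large fixed constant $M$ and prove the estimate separately on $[0,M]$ and on $[M,\sqrt{2\log p}]$, sending $p\to\infty$ first and $M\to\infty$ afterwards. This ordering is forced: the two-sided bounds for $G$ from Lemma~\ref{Gt1} degenerate as $t\to 0$ on one end and are only asymptotically sharp as $t\to\infty$ on the other.

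On $[0,M]$ I would argue by smoothness of $G$. Since $G$ is $C^1$ on $\mathbb R$ with $G'(s)=-2\phi(s)$ and $\|G'\|_\infty=2\phi(0)=\sqrt{2/\pi}$, the mean value theorem gives $|G(t+\delta)-G(t)|\le\sqrt{2/\pi}\,\Delta_p$ (for $p$ large the shifted argument stays in $[-1,M+1]$, so nothing degenerates); and since $G$ is strictly decreasing, $G(t)\ge G(M)>0$ on $[0,M]$. Hence $\sup_{0\le t\le M}|G(t+\delta)/G(t)-1|\le\sqrt{2/\pi}\,\Delta_p/G(M)\to 0$ for every fixed $M$.

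On $[M,\sqrt{2\log p}]$ I would invoke Lemma~\ref{Gt1}. For $t\ge M\ge 1$, writing $\phi(t+\delta)/\phi(t)=e^{-t\delta-\delta^2/2}$ and combining the upper bound $G(\cdot)<2\phi(\cdot)/\cdot$ with the lower bound $G(\cdot)>2\phi(\cdot)/(\cdot+1/\cdot)$, applied at both $t$ and $t+\delta$, yields
\[
\frac{t}{\,t+\delta+\frac{1}{t+\delta}\,}\;e^{-t\delta-\delta^2/2}
\;<\;\frac{G(t+\delta)}{G(t)}\;<\;
\frac{t+1/t}{t+\delta}\;e^{-t\delta-\delta^2/2}.
\]
The exponential factor is $1+o(1)$ uniformly, because $|t\delta|\le\sqrt{2\log p}\,\Delta_p=\sqrt2\,(\Delta_p\sqrt{\log p})\to 0$ and $\delta^2/2\to 0$; and since $t\ge M$ and $|\delta|\le\Delta_p\to 0$, the rational prefactors are squeezed between $\bigl(1+\Delta_p+\tfrac1{M(M-1)}\bigr)^{-1}$ and $(1+1/M^2)(1-\Delta_p)^{-1}$. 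Hence $\limsup_{p\to\infty}\sup_{M\le t\le\sqrt{2\log p}}|G(t+\delta)/G(t)-1|\le C/M$ for an absolute constant $C$.

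Combining the two ranges finishes the argument: given $\varepsilon>0$, first pick $M$ with $C/M<\varepsilon$, then take $p$ large enough that both suprema drop below $\varepsilon$. The one genuine obstacle is the uniformity over the growing interval: one cannot take a single limit in $t$ because $(t+1/t)/(t+\delta)$ is close to $1$ only for large $t$, so the constant-$M$ cutoff together with the elementary bookkeeping that $t\delta=o(1)$ uniformly on $[0,\sqrt{2\log p}]$ is essential; and the near-zero regime must be handled by the Lipschitz estimate rather than Lemma~\ref{Gt1}, whose $1/t$ terms blow up there.
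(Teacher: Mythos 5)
The paper does not actually prove this lemma: it is imported verbatim as Proposition~1 of \cite{ma2020global}, so there is no in-paper argument to compare against. Your proof is a correct, self-contained derivation. The two-regime split is exactly the right structure: near $t=0$ the Mills-ratio bounds of Lemma~\ref{Gt1} are useless (the $1/t$ term blows up and the upper bound is vacuous), so the Lipschitz estimate $|G(t+\delta)-G(t)|\le 2\phi(0)\Delta_p$ together with $G(t)\ge G(M)>0$ is the right tool; on $[M,\sqrt{2\log p}]$ the sandwich
$\frac{t}{t+\delta+1/(t+\delta)}e^{-t\delta-\delta^2/2}<G(t+\delta)/G(t)<\frac{t+1/t}{t+\delta}e^{-t\delta-\delta^2/2}$
is correctly assembled from Lemma~\ref{Gt1} applied at both $t$ and $t+\delta$, and you correctly isolate the one point where the hypothesis $\Delta_p\sqrt{\log p}\to0$ is actually used, namely $|t\delta|\le\sqrt{2}\,\Delta_p\sqrt{\log p}\to0$ uniformly over the growing interval (a mere $\Delta_p\to0$ would not suffice, since $t$ can be as large as $\sqrt{2\log p}$). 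The order of limits ($p\to\infty$ first, then $M\to\infty$) is handled properly via the $\varepsilon$--$M$--$p$ bookkeeping at the end. The only cosmetic remark is that for $t$ near $0$ and $\delta<0$ the argument $t+\delta$ may be slightly negative; since $G(s)=2(1-\Phi(s))$ is defined and $C^1$ on all of $\mathbb{R}$ with $\|G'\|_\infty=2\phi(0)$, your mean-value bound still applies verbatim, but it is worth saying so explicitly.
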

\section{Proof of Theorem \ref{lassobeta1}}
 To proceed the proof, we require the Lemma 1 in \cite{dedieu2019an}, which resembles the high-probability first order conditions (KKT conditions) in Lasso linear models from the estimating equations.

\begin{lemma}\label{KKT-LIKE}
Let $g_{j}\sim \mathrm{subG}(\sigma^2)$ for $i=1,\cdots, p$. Denote $(g_{(1)}, \cdots, g_{(p)})$ as a non-increasing rearrangement of $(|g_{1}|, \cdots,|g_{p}|)$ and define the coefficients $\lambda_{j}=\sqrt{\log (2 p / j)}, j=1, \cdots p .$ For $\delta \in(0, \frac{1}{2}),$ it holds
$$
\mP\left(\sup _{j=1, \ldots, p}\{\frac{g_{(j)}}{\sigma \lambda_{j}^{(p)}}\} \leq 12 \sqrt{\log (1 / \delta)}\right)\ge 1-\delta.
$$
\end{lemma}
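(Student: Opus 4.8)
This is precisely the sorted sub-Gaussian tail inequality behind Lemma 1 of \cite{dedieu2019an}, and the plan is to reproduce its short argument: control each order statistic $g_{(j)}$ pointwise, spread a total failure budget $\delta$ across the $p$ indices, and finish with a union bound over $j\in[p]$. A useful observation to make at the start is that no independence of the $g_i$ is needed — the event $\{g_{(j)}>u\}$ is monotone in a \emph{count}, and a Markov bound on that count absorbs any dependence for free.

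First I would rewrite the relevant event, for any threshold $u>0$, as a counting event:
\begin{equation*}
\{\, g_{(j)} > u \,\} \;=\; \Big\{\, \textstyle\sum_{i=1}^p \indic\{|g_i|>u\} \;\geq\; j \,\Big\}.
\end{equation*}
Since each $g_i\sim\mathrm{subG}(\sigma^2)$ has mean zero, a Chernoff bound gives the two-sided tail $\mP(|g_i|>u)\le 2e^{-u^2/(2\sigma^2)}$, hence $\mE\sum_{i=1}^p\indic\{|g_i|>u\}\le 2p\,e^{-u^2/(2\sigma^2)}$, and Markov's inequality yields $\mP(g_{(j)}>u)\le \tfrac{2p}{j}e^{-u^2/(2\sigma^2)}$ for every $j$. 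Next I would calibrate thresholds: put $\delta_j:=\delta\,\tfrac{j}{2p}$, so that $\sum_{j=1}^p\delta_j=\delta\,\tfrac{p+1}{4p}\le\delta$, and take $u_j:=\sigma\sqrt{2\log(2p/j)+2\log(1/\delta_j)}$, which makes the previous display equal $\delta_j$. It then remains to check $u_j\le 12\sqrt{\log(1/\delta)}\,\sigma\lambda_j$ with $\lambda_j=\lambda_j^{(p)}=\sqrt{\log(2p/j)}$: expanding $\log(1/\delta_j)=\log(1/\delta)+\log(2p/j)$ gives $u_j^2/\sigma^2=4\log(2p/j)+2\log(1/\delta)$, and since $\log(2p/j)\ge\log 2$ (from $j\le p$) while $\log(1/\delta)>\log 2$ (from $\delta<1/2$), each summand is at most a $\tfrac1{\log 2}$-multiple of $\log(1/\delta)\log(2p/j)$, so $u_j^2\le\tfrac{6}{\log2}\,\sigma^2\log(1/\delta)\lambda_j^2$, comfortably inside the factor $144$. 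A union bound then gives $\mP\big(\sup_{j}g_{(j)}/(\sigma\lambda_j)>12\sqrt{\log(1/\delta)}\big)\le\sum_j\delta_j\le\delta$, and passing to the complement is the assertion.

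The routine ingredients (Chernoff, Markov, the arithmetic on the constant) are harmless; the one place that wants a little care is the \emph{allocation} of the budget, i.e.\ choosing $\delta_j\propto j/p$ rather than, say, a uniform split. This choice is what forces the correction term $\log(1/\delta_j)$ to grow only like $\log(2p/j)$ instead of exploding for the top order statistics ($j$ small), which is exactly what keeps $u_j$ below the target $12\sqrt{\log(1/\delta)}\,\sigma\lambda_j$ uniformly in $j$ while still summing to $\delta$. A sharper (additive rather than multiplicative) dependence on $\log(1/\delta)$ would need a genuine maximal-inequality argument, but the stated multiplicative form is exactly what the KKT-type step of Theorem \ref{lassobeta1} consumes, so I would not pursue that refinement.
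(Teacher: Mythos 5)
There is a genuine gap, and it sits exactly at the step you yourself flagged as the delicate one: the allocation of the failure budget. With $\delta_j:=\delta\,\tfrac{j}{2p}$ you have
$\sum_{j=1}^p\delta_j=\tfrac{\delta}{2p}\cdot\tfrac{p(p+1)}{2}=\tfrac{\delta(p+1)}{4}$, not $\tfrac{\delta(p+1)}{4p}$; you have dropped a factor of $p$ in $\sum_{j=1}^p j$. Consequently your union bound controls the bad event only by $\delta(p+1)/4$, which exceeds $1$ as soon as $p\gtrsim 1/\delta$ --- precisely the high-dimensional regime in which the lemma is invoked (the paper applies it with $\delta<1/n\ll 1/p$... in fact with $n\ll p$). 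The error is not cosmetic, because the two requirements on the $\delta_j$ pull in opposite directions: you need $\sum_j\delta_j\le\delta$ \emph{and} $\log(1/\delta_j)\lesssim\log(1/\delta)\,\log(2p/j)$ uniformly in $j$. Renormalizing your choice to $\delta_j=\tfrac{2\delta j}{p(p+1)}$ restores the sum but injects an extra $\log p$ into $\log(1/\delta_j)$, which at $j\asymp p$ (where $\lambda_j^2=\log(2p/j)\asymp 1$) can no longer be absorbed into $144\,\log(1/\delta)\,\lambda_j^2$ for arbitrary $p$ and $\delta\in(0,1/2)$. A uniform split $\delta_j=\delta/p$ has the same defect. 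So the per-index Markov-plus-union-bound route, as calibrated here, does not close.

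Two further remarks. First, the paper does not prove this lemma at all --- it is imported verbatim as Lemma~1 of \cite{dedieu2019an} --- so there is no internal proof to match; but the argument in that reference (following the Slope analysis of Bellec--Lecu\'e--Tsybakov) is of a different type: one first bounds $\mE\bigl[\max_j g_{(j)}/(\sigma\lambda_j)\bigr]$ by an absolute constant (using that $\max_j g_{(j)}/\lambda_j\le \max_j |g_j|/\sqrt{\log 2}\cdot(\cdot)$ via an integrated tail bound over the counting identity you wrote down), and then upgrades to a high-probability statement by concentration of the maximum; the factor $12\sqrt{\log(1/\delta)}$ is the generous multiplicative envelope of an \emph{additive} bound of the form $C_1+C_2\sqrt{\log(1/\delta)}$, using $\delta<1/2$. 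Ironically, the "genuine maximal-inequality argument" you set aside in your closing paragraph is the mechanism that actually makes the constant work uniformly in $p$. Second, your preliminary reductions are all sound and worth keeping: the identity $\{g_{(j)}>u\}=\{\sum_{i=1}^p\indic\{|g_i|>u\}\ge j\}$, the tail bound $\mP(|g_i|>u)\le 2e^{-u^2/(2\sigma^2)}$, and the observation that Markov's inequality on the count requires no independence across $j$ are exactly the right openings; only the calibration of the thresholds $u_j$ against a summable budget needs to be redone.
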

Grounded on Lemma \ref{KKT-LIKE}, we first show the following result which indicates that the difference $\Delta=\widehat\bbeta-{\bbeta^0}$ belongs to the cones condition defined in Definition \ref{new_REC}.
\begin{lemma}\label{cone}
Suppose that the data $(\bX, \bY):=\{(\bx_i, y_i)\}_{i=1}^{n}$ satisfies {(C4)(i)}. Under {(C1)} and {(C2)} for the design and the parameter, for any $\delta \in\left(0, \frac{1}{n}\right)$ and $\alpha \geq 2$, if we take $\lambda = 12 \alpha \sigma L \sqrt{\frac{\log (2p e /s_0)}{n} \log \left( \frac{1}{\delta}\right)}$, then the following cone condition holds with probability as least $1 - \delta$:
\begin{equation*}
\Delta  \in \Lambda \left({S}, \gamma_{1}^{*}:=\frac{\alpha}{\alpha-1}, \gamma_{2}^{*}:=\frac{\sqrt{s_0}}{\alpha-1}\right).
\end{equation*}
\end{lemma}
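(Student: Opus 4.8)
The plan is to start from the optimality of $\widehat{\bbeta}$ in the penalized problem \eqref{lassoes} and produce the basic inequality, then control the stochastic term using Lemma \ref{KKT-LIKE}, and finally convert the resulting $\ell_1$-type inequality into membership in the cone $\Lambda(S,\gamma_1^*,\gamma_2^*)$.

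First I would write down the first-order (KKT) condition for the Lasso-type estimator: since $\widehat{\bbeta}$ minimizes $-\ell(\bbeta)+\lambda\|\bbeta\|_1$, there is a subgradient $\widehat{\bm z}\in\partial\|\widehat{\bbeta}\|_1$ with $-\dot\ell(\widehat{\bbeta})+\lambda\widehat{\bm z}=\bm 0$. Combining this with convexity of $-\ell$ and evaluating at $\bbeta^0$ gives the basic inequality
\[
(\widehat{\bbeta}-\bbeta^0)^T\bigl(-\dot\ell(\bbeta^0)\bigr)+\lambda\|\widehat{\bbeta}\|_1\le\lambda\|\bbeta^0\|_1 ,
\]
so that, writing $\Delta=\widehat{\bbeta}-\bbeta^0$,
\[
\lambda\|\widehat{\bbeta}\|_1-\lambda\|\bbeta^0\|_1\le \|\Delta\|_1\,\|\dot\ell(\bbeta^0)\|_\infty .
\]
Here $\dot\ell(\bbeta^0)=\frac1n\sum_{i=1}^n\bx_i\bigl(y_i-\dot b(\bx_i^T\bbeta^0)\bigr)$ is a mean-zero sub-Gaussian vector: conditioning on $\bX$, each coordinate $\dot\ell(\bbeta^0)_j=\frac1n\sum_i x_{ij}(y_i-\mathbb E(y_i|\bx_i))$ is a weighted sum of the sub-Gaussian errors in \eqref{subREG}, hence (using $|x_{ij}|\le L$ from (C1) and the uniform variance-proxy bound $C_b^2$ from (C4)(i)) it is $\mathrm{subG}(\sigma^2 L^2/n)$ up to constants. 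I would then apply Lemma \ref{KKT-LIKE} to the rescaled coordinates: with the sorted-coordinate weights $\lambda_j=\sqrt{\log(2p/j)}$, on an event of probability at least $1-\delta$ we get $\|\dot\ell(\bbeta^0)\|_\infty\le \frac{\sigma L}{\sqrt n}\cdot 12\sqrt{\log(1/\delta)}\cdot\lambda_1$ and, more usefully, the stronger sorted bound $\sum_{j\in S}|\dot\ell(\bbeta^0)|_{(j)}\le \frac{12\sigma L}{\sqrt n}\sqrt{\log(1/\delta)}\sum_{j=1}^{s_0}\lambda_j$, where $\sum_{j=1}^{s_0}\lambda_j=\sum_{j=1}^{s_0}\sqrt{\log(2p/j)}\le s_0\sqrt{\log(2pe/s_0)}$ by a standard integral comparison. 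This is exactly where the sharper $\log(2pe/s_0)$ factor — rather than $\log p$ — enters, and it is the crux of the improvement over the classical oracle inequalities.

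Next I would exploit the decomposability of $\|\cdot\|_1$ over $S$ and $S^c$: since $\bbeta^0_{S^c}=\bm 0$,
\[
\|\widehat{\bbeta}\|_1-\|\bbeta^0\|_1=\|\bbeta^0_S+\Delta_S\|_1+\|\Delta_{S^c}\|_1-\|\bbeta^0_S\|_1\ge \|\Delta_{S^c}\|_1-\|\Delta_S\|_1 .
\]
Plugging this into the basic inequality and splitting $\|\Delta\|_1=\|\Delta_S\|_1+\|\Delta_{S^c}\|_1$ on the right gives
\[
\lambda\bigl(\|\Delta_{S^c}\|_1-\|\Delta_S\|_1\bigr)\le \bigl(\|\Delta_S\|_1+\|\Delta_{S^c}\|_1\bigr)\,R ,
\qquad R:=\|\dot\ell(\bbeta^0)\|_\infty .
\]
With the choice $\lambda = 12\alpha\sigma L\sqrt{\tfrac{\log(2pe/s_0)}{n}\log(1/\delta)}$ and the sorted bound above, on the good event one has $R\le\lambda/\alpha$ (this is where $\alpha\ge 2$ is used so that $\lambda-R>0$ with room to spare). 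Rearranging,
\[
\Bigl(1-\tfrac1\alpha\Bigr)\|\Delta_{S^c}\|_1\le\Bigl(1+\tfrac1\alpha\Bigr)\|\Delta_S\|_1
\quad\Longrightarrow\quad
\|\Delta_{S^c}\|_1\le\frac{\alpha+1}{\alpha-1}\|\Delta_S\|_1 .
\]
To land precisely in $\Lambda(S,\gamma_1^*,\gamma_2^*)$ with $\gamma_1^*=\frac{\alpha}{\alpha-1}$ and $\gamma_2^*=\frac{\sqrt{s_0}}{\alpha-1}$, I would instead keep the sorted/weighted bound on $R$ in the form that pairs it against $\|\Delta_S\|_1$ through $\gamma_1^*$ and against $\|\Delta_S\|_2$ through $\gamma_2^*$ — concretely, bounding $\sum_{j\in S}|\dot\ell(\bbeta^0)|_{(j)}$ by a piece proportional to $\lambda\|\Delta_S\|_1/\alpha$ plus a piece proportional to $(\lambda/\alpha)\sqrt{s_0}\|\Delta_S\|_2$ via Cauchy–Schwarz on the largest $s_0$ coordinates — which yields $\|\Delta_{S^c}\|_1\le\frac{\alpha}{\alpha-1}\|\Delta_S\|_1+\frac{\sqrt{s_0}}{\alpha-1}\|\Delta_S\|_2$, i.e. $\Delta\in\Lambda(S,\gamma_1^*,\gamma_2^*)$.

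The main obstacle is the bookkeeping in this last step: matching the weighted-sorted bound from Lemma \ref{KKT-LIKE} to the two-term cone definition so that the constants come out exactly as $\gamma_1^*=\alpha/(\alpha-1)$ and $\gamma_2^*=\sqrt{s_0}/(\alpha-1)$, rather than some cruder single-term $\frac{\alpha+1}{\alpha-1}$ bound. Everything else — the KKT step, the sub-Gaussianity of $\dot\ell(\bbeta^0)$ under (C1)/(C4)(i), and the integral estimate $\sum_{j\le s_0}\sqrt{\log(2p/j)}\lesssim s_0\sqrt{\log(2pe/s_0)}$ — is routine. One should also note that $(\bX,\bY)$-randomness is handled by conditioning on $\bX$ first (so the sub-Gaussian tail bounds apply coordinatewise with deterministic weights $x_{ij}/n$) and then observing the resulting probability bound is uniform in $\bX$ under (C1), so it holds unconditionally.
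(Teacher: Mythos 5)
Your proposal follows essentially the same route as the paper: the basic inequality from optimality of $\widehat{\bbeta}$ (the paper phrases this via non-negativity of the conditional Kullback--Leibler divergence, which for convex $b$ is equivalent to your convexity/KKT argument), control of the empirical-process term $\frac1n\sum_i(y_i-\mE y_i)\bx_i^T\Delta$ via Lemma \ref{KKT-LIKE} with the sorted weights $\lambda_j=\sqrt{\log(2p/j)}$, Stirling plus Cauchy--Schwarz to extract the $\sqrt{s_0\log(2pe/s_0)}\,\|\Delta_S\|_2$ factor, and rearrangement under the stated choice of $\lambda$. The one correction to your final bookkeeping step: the weighted sum must be split as $\sum_{j\le s_0}\lambda_j|\Delta_j|+\lambda_{s_0}\|\Delta_{S^c}\|_1\le\sqrt{s_0\log(2pe/s_0)}\,\|\Delta_S\|_2+\sqrt{\log(2pe/s_0)}\,\|\Delta_{S^c}\|_1$; the tail piece is proportional to $\|\Delta_{S^c}\|_1$ (it cannot be bounded by norms of $\Delta_S$, since it carries the $S^c$ coordinates of the inner product), and it is precisely this $\frac{\lambda}{\alpha}\|\Delta_{S^c}\|_1$ term that, after being moved to the left-hand side, yields the coefficients $\gamma_1^*=\frac{\alpha}{\alpha-1}$ and $\gamma_2^*=\frac{\sqrt{s_0}}{\alpha-1}$ exactly.
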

\begin{proof}
On the one hand, the conditional Kullback-Leibler divergence between ${\bbeta ^0}$ and $ \bbeta$ is defined by
\[{K_n}({\bbeta ^0},\bbeta; \bX):= {\mE}[\ell_n ({\bbeta ^0}) - \ell_n (\bbeta )|\bX]\ge 0,  \bbeta  \in \mathbb{R}^p.\]
The un-conditional Kullback-Leibler divergence is given by ${\mE}[{K_n}({\bbeta^0},\bbeta; \bX)]={\mE}[\ell_n ({\bbeta ^0}) - \ell_n (\bbeta )]$.

For GLMs with canonical link, we have
\begin{align}\label{eq-kl}
{K_n}({\bbeta ^0},\widehat\bbeta{;\bX}) =\frac{1}{n}\sum_{i=1}^{n}\left\{({\mE}{y_i})\bx_i^{T}({\bbeta ^0}-\widehat\bbeta)+b( {\bx}_i^{T}\widehat\bbeta)-b\left( {\bx}_i^{T}{\bbeta ^0}\right)\right\}.
\end{align}
Substitute ${\mE}{y_i} = {y_i} + ({\mE}{y_i} - y_i)$ into (\ref{eq-kl}), we get
\begin{align*}
{K_n}({\bbeta ^0},\widehat\bbeta{;\bX})&= \frac{1}{n}\sum_{i=1}^{n}\left\{{y_i}{\bx}_i^{T}({\bbeta ^0}-\widehat\bbeta)+b( {\bx}_i^{T}{\widehat\bbeta})-b\left( {\bx}_i^{T}{\bm\beta^0}\right)\right\}+\frac{1}{n}\sum_{i=1}^{n}\left\{({\mE}{y_i} - y_i){\bx}_i^{T}({\bbeta ^0}-\widehat\bbeta)\right\} .\\
& = \ell_n (\widehat \bbeta ) - \ell_n ({\bbeta ^0} ) +\frac{1}{n}\sum_{i=1}^{n}({\mE}{y_i} - y_i){\bx}_i^{T}({\bbeta ^0}-\widehat\bbeta).
\end{align*}

On the other hand, the definition of $\widehat{\bbeta}$ implies
$$\ell_n(\widehat \bbeta) +\lambda\|\widehat\bbeta\|_{1} \le \ell_n({\bbeta ^0}) + \lambda\|{\bbeta ^0}\|_{1} \Rightarrow \ell_n(\widehat \bbeta)-\ell_n({\bbeta ^0})  \le \lambda\|{\bbeta ^0}\|_{1}-\lambda\|\widehat\bbeta\|_{1},$$
which gives
\begin{align}\label{eq:KLinq}
{K_n}({\bbeta ^0},\widehat\bbeta{;\bX})& \le \lambda\|{\bbeta ^0}\|_{1}-\lambda\|\widehat\bbeta\|_{1} +\frac{1}{n}\sum_{i=1}^{n}(y_i-{\mE}{y_i}){\bx}_i^{T}\Delta\nonumber\\
& =\lambda\|{\bbeta_S^0}\|_{1}-\lambda\|\widehat\bbeta_S\|_{1}-\lambda\|(\widehat\bbeta_{S^c}-{\bbeta_{S^c}^0})+{\bbeta_{S^c}^0}\|_{1} +\frac{1}{n}\sum_{i=1}^{n}(y_i-{\mE}{y_i}){\bx}_i^{T}\Delta\nonumber\\
&\le \lambda\|{\Delta_S}\|_{1}-\lambda\|\Delta_{S^c}\|_{1} +\frac{1}{n}\sum_{i=1}^{n}(y_i-{\mE}{y_i}){\bx}_i^{T}\Delta.
\end{align}
It remains to control the upper bound of $\frac{1}{n}\sum_{i=1}^{n}(y_i-{\mE}{y_i}){\bx}_i^{T}\Delta$ via Lemma \ref{KKT-LIKE}. To see this, denote $\bm g :=\frac{1}{\sqrt{n}}\sum_{i=1}^{n}(y_i-{\mE}{y_i}){\bx}_i^{T}$ and then {(C4)} gives $g_{j}\sim \mathrm{subG}(\sigma^2L^2)$. Let $\left(g_{(1)}, \ldots, g_{(p)}\right)$ be a non-increasing rearrangement of $\left(\left|g_{1}\right|, \ldots,\left|g_{p}\right|\right)$. We have,
\begin{align}\label{Group_Las_thm_1_2}
\frac{1}{\sqrt{n}}\sum_{i=1}^{n}(y_i-{\mE}{y_i}){\bx}_i^{T}\Delta &= \sum_{j = 1}^p g_j \Delta_j \leq \sum_{j = 1}^p |g_j| |\Delta_j|  = \sum_{j=1}^{p} \frac{g_{(j)}}{ \sigma \lambda_{j}} \sigma L \lambda_{j}\left|\Delta_{(j)}\right| \nonumber\\
& \leq  \sigma L \sup _{j=1, \ldots, p}\left\{\frac{g_{(j)}}{\sigma L\lambda_{j}}\right\} \sum_{j=1}^{p} \lambda_{j}\left|\Delta_{(j)}\right| \nonumber\\
[{\text{Lemma}~\ref{KKT-LIKE}}]~&{\leq} 12  \sigma L \sqrt{\log (1 / \delta)} \sum_{j=1}^{p} \lambda_{j}\left|\Delta_{(j)}\right| \nonumber\\
[\lambda_{1} \geq \ldots \geq \lambda_{p} \text { and }\left|\Delta_{1}\right| \geq \ldots \geq\left|\Delta_{p}\right|]  & {\leq} 12 \sigma L \sqrt{\log (1 / \delta)} \sum_{j=1}^{p} \lambda_{j}\left|\Delta_{j}\right| \nonumber\\
& \leq 12\sigma L \sqrt{\log (1 / \delta)}(\sum_{j=1}^{s_0} \lambda_{j}\left|\Delta_{j}\right|+\lambda_{s_0}\left\|\Delta_{{S}^c}\right\|_{1})
\end{align}
holds with the probability at least $1 - \delta$. Recall that $\lambda_{j}=\sqrt{\log (2 p / j)}$. Then Stirling's approximation $n! \ge {(n/e)^n}$ shows
\begin{equation*}
\sum_{j=1}^{s_0} \lambda_{j}^{2}=\sum_{j=1}^{s_0} \log (2 s_0 / j)=s_0 \log (2 p)-\log \left(s_0 !\right) \leq s_0 \log (2 p)-s_0 \log \left(s_0/ e\right)=s_0 \log \left(2 p e / s_0\right).
\end{equation*}
For \eqref{Group_Las_thm_1_2}, Cauchy's inequality gives
\begin{equation}\label{Cauchy}
\sum_{j=1}^{s_0} \lambda_{j}\left|\Delta_{j}\right|
\leq \left\|\Delta_{{S}}\right\|_{2}\sqrt{\sum_{j=1}^{s_0} \lambda_{j}^{2}} \leq \left\|\Delta_{{S}}\right\|_{2}\sqrt{s_0 \log \left(2 p e /s_0\right)}.
\end{equation}
Put $\lambda := 12 \alpha \sigma L \sqrt{\frac{\log (2p e /s_0)}{n} \log \left( \frac{1}{\delta}\right)}$. With $\lambda_{s_0} \leq \sqrt{\log \left(2 p e / s_0\right)}$, \eqref{Group_Las_thm_1_2} and \eqref{Cauchy}, the equation \eqref{eq:KLinq} turns into
\begin{align}\label{Group_Las_thm_1_3}
0\le {K_n}({\bbeta ^0},\widehat\bbeta{;\bX} ) & \le \lambda\|{\Delta_S}\|_{1}-\lambda\|\Delta_{S^c}\|_{1} +\frac{1}{n}\sum_{i=1}^{n}(y_i-{\mE}{y_i}){\bx}_i^{T}\Delta\nonumber\\
& \leq \lambda\|{\Delta_S}\|_{1}-\lambda\|\Delta_{S^c}\|_{1} +12 \alpha \sigma L \sqrt{\frac{\log (2p e /s_0)}{n} \log \left( \frac{1}{\delta}\right)} [\sqrt{s_0} \| \Delta_{{S}} \|_2 + \| \Delta_{{S}^c} \|_1 ]\nonumber\\
& = \frac{\lambda}{\alpha} \left( \sqrt{s_0} \| \Delta_{{S}} \|_2 + \|\Delta_{{S}^c} \|_1\right) + \lambda \| \Delta_{{S}} \|_1 - \lambda \| \Delta_{{S}^c} \|_1.
\end{align}
with probability at least $1 - \delta$. Then, \eqref{Group_Las_thm_1_3} gives
\begin{equation}\label{coneproof}
\left\|\Delta_{{S}^c}\right\|_{1} \leq \frac{\alpha}{\alpha-1}\left\|\Delta_{{S}}\right\|_{1}+\frac{\sqrt{s_0}}{\alpha-1}\left\|\Delta_{{S}}\right\|_{2}
\end{equation}
with probability at least $1 - \delta$. The claim of Lemma \ref{cone} follows.
\end{proof}
\begin{proof}[proof of Theorem \ref{lassobeta1}]
Having get the cone condition \eqref{coneproof}, we can derive oracle inequality like the procedure in \cite{bickel2009simultaneous}. From \eqref{Group_Las_thm_1_3}, observe that
\begin{align}\label{eq:klupper}
{K_n}({\bbeta ^0},\widehat\bbeta{;\bX}) & \leq \frac{\lambda}{\alpha} \left( \sqrt{s_0} \| \Delta_{{S}} \|_2 + \|\Delta_{{S}^c} \|_1\right) + \lambda \| \Delta_{{S}} \|_1 - \lambda \| \Delta_{{S}^c} \|_1 \nonumber\\
[{\alpha} > 1]    & < \frac{\lambda}{\alpha} \sqrt{s_0} \|\Delta_{{S}}\|_2 + \lambda \| \Delta_{{S}} \|_1 \leq 2 \lambda \sqrt{s_0} \| \Delta_{{S}} \|_2 \leq 2 \lambda \sqrt{s_0} \| \Delta \|_2.
\end{align}
For the last inequality, we use the Cauchy's inequality.

Now, apply Taylor expansion to \eqref{eq-kl} on $b\left(t\right)$ at $t= {\bx}_i^{T}{\bbeta ^0}$, we have
\begin{align}\label{eq:kl2}
&     {K_n}({\bbeta ^0},\widehat\bbeta{;\bX}) =\frac{1}{n}\sum_{i=1}^{n}\left\{b( {\bx}_i^{T}{\bbeta ^0}){\bx}_i^{T}({\bbeta ^0}-\widehat\bbeta)+b( {\bx}_i^{T}\widehat\bbeta)-b( {\bx}_i^{T}{\bbeta ^0})\right\}\nonumber\\
&=\frac{1}{n}\sum_{i=1}^{n}\left\{\dot b( {\bx}_i^{T}{\bbeta ^0}){\bx}_i^{T}({\bbeta ^0}-\widehat\bbeta)-\dot b( {\bx}_i^{T}{\bbeta ^0}){\bx}_i^{T}({\bbeta ^0}-\widehat\bbeta)+\ddot b(s{\bx}_i^{T}{\bbeta ^0}+(1-s){\bx}_i^{T}\widehat\bbeta)\right\}\nonumber\\
&=\frac{1}{n}\sum_{i=1}^{n}\ddot b(s_i{\bx}_i^{T}{\bbeta ^0}+(1-s_i){\bx}_i^{T}\widehat\bbeta){[ {{\bx}_i^{T}({\bbeta ^0}-\widehat\bbeta)}]^2}.
\end{align}

Recall the first inequality in \eqref{eq:KLinq} and we further have
\begin{align*}
{K_n}({\bbeta ^0},\widehat\bbeta{;\bX} )& \le \lambda\|{\bbeta ^0}\|_{1}-\lambda\|\widehat\bbeta\|_{1} +\frac{1}{n}\sum_{i=1}^{n}(y_i-{\mE}{y_i}){\bx}_i^{T}\Delta \nonumber\\
&\le \lambda\|{\bbeta ^0}\|_{1}-\lambda\|\widehat\bbeta\|_{1} + \frac{12 \sigma L}{\sqrt n} \sqrt{\log (1 / \delta)} \sum_{j=1}^{p} \lambda_{j}\left|\Delta_{j}\right|,
\end{align*}
which shows that
\begin{equation*}
\|\widehat\bbeta\|_{1} \leq \|{\bbeta ^0}\|_{1}+{\sum\limits_{j = 1}^p {\sqrt {\frac{{\log (2p/j)}}{{\alpha^2\log (2pe/{s_0})}}} } \left| {{\Delta _j}} \right|}\le \|{\bbeta ^0}\|_{1}+\mathop {\max }\limits_j \sqrt {\frac{{\log (2p/j)}}{{\alpha^2\log (2pe/{s_0})}}}(\|{\bbeta ^0}\|_{1}+\|\widehat\bbeta\|_{1}).
\end{equation*}
By then we have verified
$\|\widehat\bbeta\|_{1} \leq\frac{{1 + \mathop {\max }\limits_j \sqrt {\frac{{\log (2p/j)}}{{{\alpha ^2}\log (2pe/{s_0})}}} }}{{1 - \mathop {\max }\limits_j \sqrt {\frac{{\log (2p/j)}}{{{\alpha ^2}\log (2pe/{s_0})}}} }} = \frac{{1 + \sqrt {\frac{{\log (2p)}}{{{\alpha ^2}\log (2pe/{s_0})}}} }}{{1 - \sqrt {\frac{{\log (2p)}}{{{\alpha ^2}\log (2pe/{s_0})}}} }}=:K_\alpha$. Thus, we get
$$s{\bx}_i^{T}{\bbeta ^0}+(1-s){\bx}_i^{T}\widehat\bbeta \le L(B+K_\alpha).$$
by {(C1)} and {(C2)}. Therefore, \eqref{eq:kl2} leads to
$$ {K_n}({\bbeta ^0},\widehat\bbeta{;\bX}) \ge \mathop {\inf }\limits_{\left| t \right| \le L(B + K_\alpha)} \ddot b(t)\frac{1}{n}\sum_{i=1}^{n}{[ {{\bx}_i^{T}({\bbeta ^0}-\bbeta)}]^2}|_{{\bbeta=\widehat\bbeta}}.$$
Then, \eqref{eq:klupper} and $\mathrm{RE}(s_0, \gamma, \mE(\bx \bx^{T}))$ with the restricted minimal eigenvalue $\kappa^* >0$ imply
\begin{align*}
2 \lambda \sqrt{s_0} \| {\bbeta ^0}-\widehat\bbeta \|_2  \ge \mE [{K_n}({\bbeta ^0},\bbeta{;\bX})]|_{{\bbeta=\widehat\bbeta}}&=\mathop {\inf }\limits_{\left| t \right| \le L(B + K_\alpha)} \ddot b(t){\mE [ {{\bx}^{T}({\bbeta ^0}-\bbeta)}]^2}|_{{\bbeta=\widehat\bbeta}}\\
& \ge  \kappa^* \mathop {\inf }\limits_{\left| t \right| \le L(B + K_\alpha)}\ddot b(t)\| {\bbeta ^0}-\widehat\bbeta \|_2^2
\end{align*}
which proves that $\| {\bbeta ^0}-\widehat\bbeta \|_2 \le \frac{2 \sqrt{s_0}}{\kappa^* \mathop {\inf }\limits_{\left| t \right| \le L(B + K_\alpha)}\ddot b(t)}\lambda $. Cauchy's inequality also gives
\begin{equation*}
\| {\bbeta ^0}-\widehat\bbeta\|_1 \le \sqrt{s_0}\| {\bbeta ^0}-\widehat\bbeta\|_2 \le \frac{2{s_0}}{\kappa^* \mathop {\inf }\limits_{\left| t \right| \le L(B + K_{\alpha})}\ddot b(t)}\lambda.
\end{equation*}
\end{proof}

\section{Proof of Proposition \ref{main-thm1}}
We first try to break down the debiased estimator defined in $\eqref{debia}$ into several parts. For $j\in[p]$, let $\dot{\ell}_j(\widehat{\bbeta})$ be the $j$-th component of the score function $\dot{\ell}(\bbeta)$ in \eqref{score}.
Let $\ddot{\ell}_j(\widehat{\bbeta})\in \mathbb{R}^{p}$ be the vector of whose $k$-th component is $\frac{\partial^{2} \ell(\bbeta)}{\partial \bbeta_{k} \partial \bbeta_{j}}$. By Taylor expansion, for each $j\in[p]$, there exists vector $\widetilde{\bbeta}_j$ on the segment between $\widehat{\bbeta}$ and $\bbeta^0$ such that
\begin{equation}\label{Taylor}
\dot \ell _j(\widehat{ \bbeta}) = {\dot \ell _j}({\bbeta ^0})
+ {\ddot \ell _{_j}}{({\widetilde{\bbeta} _j})^T}(\widehat{\bbeta} - {\bbeta ^0}).
\end{equation}
Construct matrix $\boldsymbol{W}(\widetilde{\bbeta}) \in \mathbb{R}^{p \times p}$,
whose $j$-th row is $\ddot{\ell}_{j}(\widetilde{\bbeta}_{j})^T$. Then, for any $\boldsymbol{c} \in \mathbb{R}^{p}$, the decomposition of the debiased estimator is
$\|\boldsymbol{c}\|_{1}=1$,
\begin{equation}\label{decom}
\begin{split}
\boldsymbol{c}^T\left(\widehat{\bbeta}^d-\bbeta^{0}\right)
=&\boldsymbol{c}^T\left\{\widehat{\bbeta}+\widehat{\bTheta}
 \dot{\ell}(\widehat{\bbeta})-\bbeta^{0}\right\} \\
=&\boldsymbol{c}^T \bSigma^{-1} \dot{\ell}\left(\bbeta^{0}\right)+
\boldsymbol{c}^T\left(\widehat{\bTheta}-\bSigma^{-1}\right)
\dot{\ell}\left(\bbeta^{0}\right)+\boldsymbol{c}^T
 \widehat{\bTheta}\left\{\dot{\ell}(\widehat{\bbeta})-
 \dot{\ell}\left(\bbeta^{0}\right)\right\}+
 \boldsymbol{c}^T\left(\widehat{\bbeta}-\bbeta^{0}\right) \\
=&\boldsymbol{c}^T \bSigma^{-1} \dot{\ell}\left(\bbeta^{0}\right)+
\boldsymbol{c}^T\left(\widehat{\bTheta}-
\bSigma^{-1}\right) \dot{\ell}\left(\bbeta^{0}\right)+
\boldsymbol{c}^T\left\{\widehat{\bTheta} \boldsymbol{W}(\widetilde{\bbeta})
+\boldsymbol{I}_p\right\}\left(\widehat{\bbeta}-\bbeta^{0}\right)\\
=&\Delta_1+\Delta_2+\Delta_3.
\end{split}
\end{equation}

In the following subsections, we will prove that after multiplying \eqref{decom} by $\sqrt{n}$, $\Delta_1$ is the main part of it, whose asymptotic distribution is a normal distribution, and $\Delta_2, \Delta_3$ are $o_p\left(\frac{1}{\sqrt{n}}\right)$ which can be ignored in the asymptotic sense.

Before proving Proposition \ref{main-thm1}, we first give some useful properties of the score function $\dot{\ell}(\bbeta^0)$ through the following lemma.
\begin{lemma}\label{inilasso}
Suppose {(C1)} and {(C4)(i)} hold, then for any $t>0$
\begin{equation*}
\mP \{\|\dot{\ell}(\bbeta^0)\|_{\infty} > t\} \leq 2p e^{-nt^2/(2C^4)}.
\end{equation*}
Furthermore, we set $t=O\left(\sqrt{\frac{\log p}{n}}\right)$, then we have
 $\|\dot{\ell}(\bbeta^0) \|_{\infty}=O_p\left(\sqrt{\frac{\log p}{n}}\right).$
\end{lemma}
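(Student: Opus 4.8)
The plan is to bound each coordinate $\dot{\ell}_j(\bbeta^0)=\frac1n\sum_{i=1}^n x_{ij}\big(y_i-\dot b(\bx_i^T\bbeta^0)\big)$ separately and then take a union bound over $j\in[p]$. The key observation is that, conditionally on the design matrix $\bX$, the summands $x_{ij}\big(y_i-\dot b(\bx_i^T\bbeta^0)\big)$ are independent (the $n$ pairs $(\bx_i,y_i)$ are i.i.d., so the $y_i$ are conditionally independent given all the $\bx_i$), mean zero (since $\mE[y_i\mid\bx_i]=\dot b(\bx_i^T\bbeta^0)$), and, by {(C4)(i)}, conditionally sub-Gaussian with variance proxy $x_{ij}^2\sigma_i^2\le L^2C_b^2$ using {(C1)}.

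Concretely, I would fix $j$ and condition on $\bX$. Writing $Y_i:=y_i-\mE(y_i\mid\bx_i)\sim\mathrm{subG}(\sigma_i^2)$ with $\max_i\sigma_i^2\le C_b^2$ and taking the (given $\bX$) non-random weights $w_i:=x_{ij}/n$, I would apply Lemma~\ref{rancon} to $S_n^w=\sum_{i=1}^n w_iY_i=\dot{\ell}_j(\bbeta^0)$. Since $\mE[S_n^w\mid\bX]=0$ and $\|\bm w\|_2^2=n^{-2}\sum_{i=1}^n x_{ij}^2\le L^2/n$ by {(C1)}, Lemma~\ref{rancon} gives $\mP\big(|\dot{\ell}_j(\bbeta^0)|>t\mid\bX\big)\le 2\exp\{-t^2/(2C_b^2L^2/n)\}=2\exp\{-nt^2/(2C_b^2L^2)\}$. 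This bound is uniform over all realizations of $\bX$ consistent with {(C1)}, so integrating out $\bX$ leaves the same bound unconditionally, and a union bound over $j=1,\dots,p$ yields $\mP(\|\dot{\ell}(\bbeta^0)\|_\infty>t)\le 2p\exp\{-nt^2/(2C_b^2L^2)\}$, i.e.\ the claimed inequality with $C^4:=C_b^2L^2$ (equivalently, taking $C:=\max\{C_b,L\}$ so that $C_b^2L^2\le C^4$).

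For the $O_p$ statement I would set $t=t_n:=\eta\sqrt{\log p/n}$ for a constant $\eta$ to be chosen. The tail bound then becomes $2p\exp\{-\eta^2\log p/(2C^4)\}=2p^{\,1-\eta^2/(2C^4)}$, which tends to $0$ as soon as $\eta>\sqrt2\,C^2$; hence for every $\varepsilon>0$ one can pick $\eta$ large enough that $\mP(\|\dot{\ell}(\bbeta^0)\|_\infty>\eta\sqrt{\log p/n})<\varepsilon$ for all large $n$, which is exactly $\|\dot{\ell}(\bbeta^0)\|_\infty=O_p(\sqrt{\log p/n})$.

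There is no deep obstacle here; the only point needing care is the interchange of conditioning — one must use that the sub-Gaussianity in {(C4)(i)} is a statement conditional on $\bx_i$, apply the weighted concentration inequality conditionally on the whole design, and only then average over $\bX$, relying on the fact that the resulting exponential bound is free of $\bX$ apart from the uniform envelope $L$ from {(C1)}. It is also worth noting that $y_i-\dot b(\bx_i^T\bbeta^0)$ being conditionally (possibly unbounded) sub-Gaussian rather than bounded is why Lemma~\ref{rancon} is the appropriate tool instead of Hoeffding's inequality (Lemma~\ref{hoeffd}).
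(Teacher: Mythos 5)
Your proof is correct and follows essentially the same route as the paper: a union bound over the $p$ coordinates combined with Lemma~\ref{rancon} applied to the weighted sum $\frac{1}{n}\sum_i x_{ij}\bigl(y_i-\dot b(\bx_i^T\bbeta^0)\bigr)$ with $\|\bm w\|_2^2\le L^2/n$ from {(C1)}, yielding the constant $C^4=C_b^2L^2$. The paper states this in one line without spelling out the conditioning on $\bX$; your explicit handling of that step (apply the conditional bound, note it is free of $\bX$, then integrate) is a welcome clarification rather than a deviation.
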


\begin{proof}
Utilize \eqref{score}, Lemma \ref{rancon} and Condition {(C1)} and {(C4)(i)}, we have
\begin{equation*}
\mP \{\|\dot{\ell}(\bbeta^0)\|_{\infty} > t\}
\leq  p \times \mP \left( \left|\sum_{i=1}^{n}x_{ij}\left(y_i-\dot{b}_j
(\bx_i^T\bbeta)\right)\right| > nt \right)
 \leq  2p\times e^{-nt^2/(2C^4)},
\end{equation*}
where $C$ is a constant, $j\in[p]$.
\end{proof}

\subsection{The main part $\Delta_1$}
In \eqref{decom}, let $\Delta_1$ time $\sqrt{n}$ and we get
\begin{equation}\label{maindecom}
\sqrt{n}\bc^T{\bSigma}^{-1}\dot{\ell}\left(\bbeta^{0}\right)
= \frac{1}{\sqrt{n}}\sum_{m=1}^{n}\bc^T{\bSigma}^{-1}\bx_m[y_m-\dot{b}
(\bx_m^T\bbeta^0))].
\end{equation}
The following proposition gives the asymptotic normality of the above expression.
\begin{proposition}\label{prop1}
Suppose {(C1), (C2)},{(C4)(i)}, (C5)(i) and {(C6)} hold. Let $c\in \mathbb{R}^p$ satisfying
$\|c\|_1 = 1$ and $\bc^T{\bSigma}^{-1}\bc
\rightarrow \sigma^2 \in (0,\infty)$. Then, as $n,p\rightarrow \infty$,
$$
\sqrt{n}\bc^T{\bSigma}^{-1}\dot{\ell}(\bbeta^0)
\stackrel{d}{\to} \mathcal{N}(0, \sigma^2).
$$
\end{proposition}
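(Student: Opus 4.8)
The plan is to recognize the right-hand side of \eqref{maindecom} as a normalized sum of a row-wise i.i.d.\ triangular array and to apply the Lyapunov central limit theorem. Put $Z_m := \bc^T\bSigma^{-1}\bx_m\,[y_m-\dot b(\bx_m^T\bbeta^0)]$ for $m=1,\dots,n$, so that \eqref{maindecom} equals $n^{-1/2}\sum_{m=1}^n Z_m$. For each fixed $n$ the variables $Z_1,\dots,Z_n$ are i.i.d.\ (the dependence on $n$ enters only through $\bc$, $\bSigma$ and the ambient dimension $p$), and conditioning on $\bx_m$ together with $\mE(y_m\mid\bx_m)=\dot b(\bx_m^T\bbeta^0)$ from {(C4)(i)} gives $\mE Z_m=0$.

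First I would compute the variance. Using $\mathrm{Var}(y_m\mid\bx_m)=\ddot b(\bx_m^T\bbeta^0)$ and the tower rule,
\begin{equation*}
\mathrm{Var}(Z_m)=\mE\big[(\bc^T\bSigma^{-1}\bx_m)^2\,\ddot b(\bx_m^T\bbeta^0)\big]
=\bc^T\bSigma^{-1}\,\mE\big[\bx_m\bx_m^T\ddot b(\bx_m^T\bbeta^0)\big]\,\bSigma^{-1}\bc
=\bc^T\bSigma^{-1}\bc ,
\end{equation*}
where the last step is just the definition of $\bSigma$. Hence $\mathrm{Var}\big(n^{-1/2}\sum_{m=1}^n Z_m\big)=\bc^T\bSigma^{-1}\bc\to\sigma^2\in(0,\infty)$, which pins down the limiting variance and, in particular, shows it is bounded away from zero for all large $n$.

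Next I would verify the Lyapunov condition with exponent four, i.e.\ that $n^{-1}\mE Z_m^4\big/(\bc^T\bSigma^{-1}\bc)^2\to0$. Write $a_m:=(\bSigma^{-1}\bc)^T\bx_m$. Sub-Gaussianity of $y_m-\dot b(\bx_m^T\bbeta^0)$ with conditional variance proxy $\sigma_m^2\le C_b^2$ (assumption {(C4)(i)}) yields $\mE\big[(y_m-\dot b(\bx_m^T\bbeta^0))^4\mid\bx_m\big]\le c_0 C_b^4$ for an absolute constant $c_0$, so $\mE Z_m^4\le c_0 C_b^4\,\mE a_m^4$. By {(C1)} and $\|\bc\|_1=1$,
\begin{equation*}
|a_m|\le\|\bSigma^{-1}\bc\|_1\,\|\bx_m\|_\infty\le L\,\|\bSigma^{-1}\|_{\mathrm{op},1}\,\|\bc\|_1=L\,\|\bSigma^{-1}\|_{\mathrm{op},1},
\end{equation*}
and {(C6)} gives $\|\bSigma^{-1}\|_{\mathrm{op},1}^2=O(\sqrt{\log p})$, hence $\mE a_m^4\le L^4\|\bSigma^{-1}\|_{\mathrm{op},1}^4=O(\log p)$. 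Therefore $\mE Z_m^4=O(\log p)$, and since {(C5)(i)} gives $p=o(e^{n^r})$ with $r<1/5$ (so $\log p=o(n)$), the Lyapunov ratio is $O(\log p/n)=o(1)$. The Lyapunov CLT for triangular arrays then yields $n^{-1/2}\sum_{m=1}^n Z_m\stackrel{d}{\to}\mathcal N(0,\sigma^2)$, which is the assertion of Proposition~\ref{prop1}.

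The main obstacle is conceptual rather than computational: because $\bc$, $\bSigma^{-1}$ and $p$ all change with $n$, the summands form a triangular array rather than a fixed i.i.d.\ sequence, so a classical i.i.d.\ CLT does not apply and one genuinely needs a uniform-in-$n$ moment control. The role of {(C6)} is precisely to prevent $\|\bSigma^{-1}\|_{\mathrm{op},1}$ from growing too fast, and the role of {(C5)(i)} is to keep $\log p$ negligible relative to $n$; the variance identity and the sub-Gaussian fourth-moment bound are then routine.
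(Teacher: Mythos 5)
Your proposal is correct and follows essentially the same route as the paper: the paper also treats \eqref{maindecom} as a triangular array, computes the variance $\bc^T\bSigma^{-1}\bc$ via the tower rule and the definition of $\bSigma$, bounds $|\bc^T\bSigma^{-1}\bx_m|\le L\|\bSigma^{-1}\|_{\mathrm{op},1}$ using {(C1)}, and invokes {(C5)(i)} and {(C6)} to kill the higher-moment term. The only cosmetic difference is that the paper verifies the Lindeberg condition directly via the truncation bound $\mathbbm{1}_{\{|x|\ge\varepsilon\}}\le|x/\varepsilon|^{t}$ (a Lyapunov condition with exponent $2+t$, $t<1$), whereas you use the Lyapunov condition with exponent $4$; both are sufficient for Lindeberg--Feller and rely on the same sub-Gaussian conditional moment bound.
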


\begin{proof}
Using Lindeberg-Feller central limit theorem \citep{durrett2019}, for $1\leq m \leq n$, let
$$x_{n,m}=\frac{1}{\sqrt{n}}\bc^T{\bSigma}^{-1}\bx_m\left(y_m-\dot{b}
(\bx_m^T\bbeta^0)\right).$$
So $x_{n,m}(1\leq m \leq n)$ are mutually independent random vectors. For $1\leq m \leq n$, we have
\begin{equation}\label{I1-1}
\mE x_{n,m}=\mE \left\{ \mE \left[x_{n,m} |\bx_m\right]\right\}
=\mE \left\{\frac{1}{\sqrt{n}}\bc^T{\bSigma}^{-1}\bx_m\mE
\left[\left(y_m-\dot{b}
(\bx_m^T\bbeta^0)\right)\bigg|\bx_m\right]\right\}=0.
\end{equation}

Then we have
\begin{equation}\label{I1-2}
\begin{split}
\sum_{m=1}^{n}\mE x_{n,m}^2 &=\sum_{m=1}^{n}\frac{1}{n} \mE \left\{\mE \left[\left(
\bc^T{\bSigma}^{-1}\bx_m\right)^2\left(y_m-\dot{b}
(\bx_m^T\bbeta^0)\right)^2\bigg|\bx_m\right] \right\}\\
&=\sum_{m=1}^{n}\frac{1}{n} \mE \left\{\left(\bc^T{\bSigma}^{-1}\bx_m
\right)^2  \mE \left[\left(y_m-\dot{b}
(\bx_m^T\bbeta^0)\right)^2\bigg|\bx_m\right] \right\}\\
&=\sum_{m=1}^{n}\frac{1}{n} \mE \left\{\left(\bc^T{\bSigma}^{-1}\bx_m
\right)^2 \ddot{b}(\bx_m^T\bbeta^0) \right\}\\
&=\bc^T{\bSigma}^{-1}\left\{ \frac{1}{n}\sum_{m=1}^{n}
\mE \bx_m \bx_m^T\ddot{b}(\bx_m^T\bbeta^0) \right\}{\bSigma}^{-1}\bc=\bc^T{\bSigma}^{-1}{\bSigma}{\bSigma}^{-1}\bc
=\bc^T{\bSigma}^{-1}\bc,
 \end{split}
\end{equation}
where the penultimate equation is due to $\bSigma:=\mE\bx_i\bx_i^T\ddot{b}(\bx_i^T\bbeta^0)$.

Furthermore, for any $\varepsilon>0,0<t<1$,
\begin{equation}\label{I1-3}
\begin{split}
\sum_{m=1}^{n}\mE x_{n,m}^2 \mathbbm{1}_{\{|x_{n,m}| \geq \varepsilon\}}
&\leq \sum_{m=1}^{n}\mE x_{n,m}^2 \left|\frac{x_{n,m}}{\varepsilon} \right|^t \\
&=\sum_{m=1}^{n} \frac{1}{n} \mE \left\{\left|\bc^T{\bSigma}^{-1}\bx_m\right|^{2+t} \left|y_m-\dot{b}(\bx_m^T\bbeta^0) \right|^{2+t} \right \}\cdot
\frac{1}{(\sqrt{n}\varepsilon)^t} \\
&=\sum_{m=1}^{n} \frac{1}{n}\mE \left\{ \left|\bc^T{\bSigma}^{-1}\bx_m\right|^{2+t} \mE \left[\left|y_m-\dot{b}(\bx_m^T\bbeta^0) \right|^{2+t} \bigg |\bx_m
 \right]\right \} \cdot \frac{1}{(\sqrt{n}\varepsilon)^t} .
\end{split}
\end{equation}

For \eqref{I1-3}, we have
$|\bc^T{\bSigma}^{-1}\bx_m|\leq \|\bc\|_1\cdot
\|{\bSigma}^{-1}\|_{\mathrm{op}, 1} \cdot\|\bx_m\|_{\infty} \leq L \|{\bSigma}^{-1}\|_{\mathrm{op}, 1}$ by {(C1)}. Utilize {(C1)}, {(C2)}, {(C4)(i)} and the property of sub-Gaussian random variable, and we know
that $\mE [|y_m-\dot{b}(\bx_m^T\bbeta^0)|^{2+t} |\bx_m]$ are uniformly bounded with a certain upper bound $C$. So substitute these two pieces into \eqref{I1-3} and thus when $n, p\rightarrow \infty$,
\begin{equation}\label{I1-4}
\sum_{m=1}^{n}\mE x_{n,m}^2
\mathbbm{1}_{\{|x_{n,m}|\geq \varepsilon\}} \leq C L \|{\bSigma}^{-1}\|_{\mathrm{op}, 1}^{2 + t} \frac{1}{(\sqrt{n}\varepsilon)^t} \rightarrow 0
\end{equation}
by noticing that (C5)(i) and (C6) implies $\|{\bSigma}^{-1}\|_{\mathrm{op}, 1}^{2 + t} \cdot \frac{1}{n^{t / 2}} = o(1)$ when $t \geq 1 / 2$. Finally, from \eqref{I1-1}, \eqref{I1-2}, \eqref{I1-4} and Lindeberg-Feller central limit theorem, we know that
\begin{equation*}
\sqrt{n}\bc^T{\bSigma}^{-1}\dot{\ell}(\bbeta^0)
\stackrel{d}{\to} \mathcal{N}(0, \sigma^2).	
\end{equation*}
\end{proof}

\subsection{The remainder $\Delta_2$ and $\Delta_3$}
We first give some auxiliary lemmas. The following conclusion comes from Lemma 7.1 \citep{cai2016} and Lemma 9 \citep{yuyi2018}.
\begin{lemma}
\label{prop2-1}
Let ${\bTheta} = ({\bTheta}_1,\ldots,{\bTheta}_p)^T
= (\Theta_{ij}) \in \mathbb{R}^{p \times p}$. Let
$\widehat{{\bTheta}} = (\widehat{{\bTheta}}_1,\ldots,
\widehat{{\bTheta}}_p)^T$ be the estimator of ${\bTheta}$. Then on event
\begin{equation*}
\bigl\{\|\widehat{{\bTheta}}_j\|_1 \leq \|{\bTheta}_j\|_1, \, j\in[p]\bigr\},	
\end{equation*}
we have
\begin{equation*}
\|\widehat{{\bTheta}} - {\bTheta}\|_{\mathrm{op},\infty} \leq 12\|\widehat{{\bTheta}} - {\bTheta}\|_{\infty} \max_{j\in[p]} \sum_{i=1}^p \mathbbm{1}_{\{\Theta_{ij} \neq 0\}}.	
\end{equation*}
\end{lemma}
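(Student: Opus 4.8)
The plan is to reduce the matrix $\ell_\infty$-operator norm to a collection of row-wise $\ell_1$-errors and then convert each such $\ell_1$-error into an $\ell_\infty$-error via a cone decomposition on the support of the corresponding row of $\bTheta$, exactly the device used for the Lasso-type cone arguments above. First I would record the elementary identity that the $\ell_\infty\to\ell_\infty$ operator norm of a matrix $A$ equals its maximum absolute row sum, $\|A\|_{\mathrm{op},\infty}=\max_{j\in[p]}\sum_{k=1}^p|A_{jk}|$ (the supremum being attained at a sign vector adapted to the worst row). Applying this to $A=\widehat{\bTheta}-\bTheta$, whose $j$-th row is $(\widehat{\bTheta}_j-\bTheta_j)^T$ under the convention $\widehat{\bTheta}=(\widehat{\bTheta}_1,\dots,\widehat{\bTheta}_p)^T$, gives $\|\widehat{\bTheta}-\bTheta\|_{\mathrm{op},\infty}=\max_{j\in[p]}\|\widehat{\bTheta}_j-\bTheta_j\|_1$, so it suffices to bound $\|\widehat{\bTheta}_j-\bTheta_j\|_1$ for each fixed $j$.

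Fix $j$ and set $\Delta_j:=\widehat{\bTheta}_j-\bTheta_j$ and $T_j:=\{k:\Theta_{jk}\neq 0\}$, so that $|T_j|=r_j:=\sum_{i=1}^p\mathbbm{1}\{\Theta_{ij}\neq 0\}$ (using that $\bTheta=\bSigma^{-1}$ is symmetric, so row and column supports coincide) and $(\bTheta_j)_{T_j^c}=\bm 0$. On the event $\{\|\widehat{\bTheta}_j\|_1\le\|\bTheta_j\|_1\}$, splitting coordinates on $T_j$ and $T_j^c$ and using the triangle inequality,
\begin{equation*}
\|\bTheta_j\|_1\;\ge\;\|\widehat{\bTheta}_j\|_1\;=\;\|(\bTheta_j)_{T_j}+(\Delta_j)_{T_j}\|_1+\|(\Delta_j)_{T_j^c}\|_1\;\ge\;\|\bTheta_j\|_1-\|(\Delta_j)_{T_j}\|_1+\|(\Delta_j)_{T_j^c}\|_1,
\end{equation*}
which, after cancelling $\|\bTheta_j\|_1$, yields the cone inequality $\|(\Delta_j)_{T_j^c}\|_1\le\|(\Delta_j)_{T_j}\|_1$. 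Hence $\|\Delta_j\|_1\le 2\|(\Delta_j)_{T_j}\|_1\le 2r_j\|\Delta_j\|_\infty\le 2\|\widehat{\bTheta}-\bTheta\|_\infty\max_{k\in[p]}r_k$; taking the maximum over $j$ gives the claim, with a small absolute constant (the constant $12$ in the statement is what Lemma~7.1 of \cite{cai2016} and Lemma~9 of \cite{yuyi2018} record, and is in any case weaker than necessary).

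I do not expect a genuine obstacle inside this lemma: conditioned on the stated $\ell_1$-budget event it is a purely algebraic, deterministic statement, and the only points needing care are (i) matching the maximum-absolute-row-sum representation of $\|\cdot\|_{\mathrm{op},\infty}$ to the convention $\widehat{\bTheta}=(\widehat{\bTheta}_1,\dots,\widehat{\bTheta}_p)^T$ together with the symmetry of $\bTheta$, and (ii) the bookkeeping of the constant. The substantive work lies rather in verifying, elsewhere, that the conditioning event itself holds with high probability --- this is immediate because the true column $\bTheta_j$ is feasible for the CLIME program \eqref{Theta} once $\|\bSigma_n\bTheta_j-\boldsymbol{e}_j\|_\infty\le\lambda_n$, whence the $\ell_1$-minimality of $\widehat{\bTheta}_j$ forces $\|\widehat{\bTheta}_j\|_1\le\|\bTheta_j\|_1$ --- so this lemma will then be combined with a deviation bound on $\|\widehat{\bTheta}-\bTheta\|_\infty$ and condition {(C6)} to control $\Delta_2$ and $\Delta_3$ in \eqref{decom}.
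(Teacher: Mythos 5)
Your proof is correct, and it is essentially the standard support-splitting (cone) argument that underlies the sources the paper cites for this lemma (Lemma~7.1 of \cite{cai2016} and Lemma~9 of \cite{yuyi2018}); the paper itself does not reproduce a proof, so your derivation is a legitimate self-contained substitute. The reduction of $\|\cdot\|_{\mathrm{op},\infty}$ to the maximum row-wise $\ell_1$ error, the cancellation $\|(\Delta_j)_{T_j^c}\|_1\le\|(\Delta_j)_{T_j}\|_1$ from the $\ell_1$-budget event, and the bound $\|\Delta_j\|_1\le 2r_j\|\Delta_j\|_\infty$ are all sound, and you correctly observe that this yields the constant $2$ rather than $12$, so the stated inequality follows a fortiori. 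Two small points worth keeping explicit in a written version: (i) as you note, the statement's count $\sum_{i}\mathbbm{1}\{\Theta_{ij}\neq 0\}$ is a column-support size while your $T_j$ is a row support, so the identification $|T_j|=r_j$ genuinely uses the symmetry of $\bTheta=\bSigma^{-1}$ (harmless here, but the lemma as literally stated for an arbitrary matrix would need either that hypothesis or the maximum taken over row supports); and (ii) your closing remark that the conditioning event holds because the true $\bTheta_j$ is feasible for the CLIME program on the event $\{\|\bSigma_n\bTheta_j-\boldsymbol{e}_j\|_\infty\le\lambda_n\}$ is exactly how the paper deploys the lemma inside the proof of Proposition~\ref{prop2}.
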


The following lemma is similar to Lemma 8 \citep{yuyi2018}, which is used to bound the probability of event $\mathcal{A}$ in \eqref{eventA}.

\begin{lemma}\label{prop2-2}
Suppose {(C1)--(C4)} hold. Set
$\lambda\asymp\sqrt{\frac{\log (p/s_0)}{n}}$
in $\eqref{lassoes}$ and we get estimator $\widehat{\bbeta}$. Then we have
\begin{equation*}
\|{\bSigma}^{-1}{\bSigma}_n -
{I}_p\|_{\infty} =O_p\left\{ \left\|{\bSigma}^{-1}\right\|_{\mathrm{op}, 1} \cdot s_0\sqrt{\frac{\log (p/s_0)}{n}} \right\}.	
\end{equation*}
\end{lemma}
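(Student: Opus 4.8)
The plan is to start from the identity $\bSigma^{-1}\bSigma_n - \boldsymbol{I}_p = \bSigma^{-1}(\bSigma_n - \bSigma)$ and to peel off a factor $\|\bSigma^{-1}\|_{\mathrm{op},\infty}$ via the elementary bound $\|\boldsymbol{A}\boldsymbol{B}\|_\infty \le \|\boldsymbol{A}\|_{\mathrm{op},\infty}\,\|\boldsymbol{B}\|_\infty$ for the entrywise sup-norm of a matrix product (which holds because $|(\boldsymbol{A}\boldsymbol{B})_{jk}| \le (\max_j\sum_l|A_{jl}|)\max_{l,k}|B_{lk}|$). Since $\bSigma$ is symmetric, so is $\bSigma^{-1}$, and its maximal absolute row sum equals its maximal absolute column sum, i.e.\ $\|\bSigma^{-1}\|_{\mathrm{op},\infty} = \|\bSigma^{-1}\|_{\mathrm{op},1}$, which is exactly the factor in the statement. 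It then remains to show $\|\bSigma_n - \bSigma\|_\infty = O_p\bigl(s_0\sqrt{\log(p/s_0)/n}\bigr)$, and I would obtain this by inserting the ``oracle'' sample Hessian $\bSigma^{*} := \frac1n\sum_{i=1}^n \bx_i\bx_i^T\ddot b(\bx_i^T\bbeta^0)$ and using $\|\bSigma_n - \bSigma\|_\infty \le \|\bSigma_n - \bSigma^{*}\|_\infty + \|\bSigma^{*} - \bSigma\|_\infty$.

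First I would control the stochastic term $\|\bSigma^{*} - \bSigma\|_\infty$. For each fixed pair $(j,k)$, the entry $\bSigma^{*}_{jk}$ is an average of the i.i.d.\ variables $x_{ij}x_{ik}\ddot b(\bx_i^T\bbeta^0)$, which are uniformly bounded by (C1) together with continuity of $\ddot b$ on the compact interval $[-LB,LB]$ (available since $|\bx_i^T\bbeta^0|\le LB$ by (C1)--(C2)), and has mean $\bSigma_{jk}$. Hoeffding's inequality (Lemma \ref{hoeffd}) plus a union bound over the $p^2$ index pairs then gives $\|\bSigma^{*} - \bSigma\|_\infty = O_p\bigl(\sqrt{\log p/n}\bigr)$, which is of smaller order than the target rate.

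Next I would handle the bias term $\|\bSigma_n - \bSigma^{*}\|_\infty$, whose $(j,k)$ entry is $\frac1n\sum_{i=1}^n x_{ij}x_{ik}\bigl[\ddot b(\bx_i^T\widehat\bbeta) - \ddot b(\bx_i^T\bbeta^0)\bigr]$. Working on the high-probability event of Theorem \ref{lassobeta1} on which $\|\widehat\bbeta - \bbeta^0\|_1 \lesssim s_0\sqrt{\log(p/s_0)/n}$ (a quantity tending to $0$ under (C5)), for $n$ large we have $|\bx_i^T(\widehat\bbeta-\bbeta^0)| \le L\|\widehat\bbeta-\bbeta^0\|_1 \le R$ for every $i$, so (C4)(ii) applies and yields $|\ddot b(\bx_i^T\widehat\bbeta) - \ddot b(\bx_i^T\bbeta^0)| \le C_R L\|\widehat\bbeta-\bbeta^0\|_1$. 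With $|x_{ij}x_{ik}|\le L^2$ this gives $\|\bSigma_n - \bSigma^{*}\|_\infty \le C_R L^3\|\widehat\bbeta-\bbeta^0\|_1 = O_p\bigl(s_0\sqrt{\log(p/s_0)/n}\bigr)$ (using the $\varepsilon$--$\delta$ form of $O_p$ applied to Theorem \ref{lassobeta1} with the rate $\lambda\asymp\sqrt{\log(p/s_0)/n}$). Combining the two pieces and reinstating $\|\bSigma^{-1}\|_{\mathrm{op},1}$ proves the claim.

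The main obstacle, a mild one, is justifying the use of the Lipschitz bound (C4)(ii), i.e.\ that $\bx_i^T(\widehat\bbeta-\bbeta^0)$ lies in the radius-$R$ window uniformly over $i$; this is precisely where the $\ell_1$-oracle inequality of Theorem \ref{lassobeta1}, the uniform bound (C1), and a careful ``on the event'' argument (turning an almost-sure inequality into an $O_p$ statement) must be combined. The remaining work is routine bookkeeping: checking that $s_0\sqrt{\log(p/s_0)/n}$ dominates $\sqrt{\log p/n}$, which is immediate since $s_0\ge1$ and $\log(p/s_0)\asymp\log p$ under (C5).
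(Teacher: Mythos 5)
Your proposal is correct and follows essentially the same route as the paper's proof: bound $\|\bSigma^{-1}(\bSigma_n-\bSigma)\|_\infty$ by $\|\bSigma^{-1}\|_{\mathrm{op},1}\|\bSigma_n-\bSigma\|_\infty$, split $\bSigma_n-\bSigma$ through the oracle sample Hessian at $\bbeta^0$, control the stochastic part by Hoeffding plus a union bound to get $O_p(\sqrt{\log p/n})$, and control the bias part via the Lipschitz condition (C4)(ii) together with the $\ell_1$-oracle inequality of Theorem \ref{lassobeta1}. The only cosmetic differences are that you justify the radius-$R$ window by $\|\widehat\bbeta-\bbeta^0\|_1\to 0$ while the paper uses the bound $L(B+K_\alpha)=:R$ directly, and you make the symmetry identity $\|\bSigma^{-1}\|_{\mathrm{op},\infty}=\|\bSigma^{-1}\|_{\mathrm{op},1}$ explicit.
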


\begin{proof}
First we note that
\begin{equation}\label{Sigma-1}
\|{\bSigma}^{-1}{\bSigma}_n -
{I}_p\|_{\infty} \leq \left\|{\bSigma}^{-1}\right\|_{\mathrm{op}, 1}
\cdot \| {\bSigma}_n- {\bSigma} \|_{\infty}.
\end{equation}
From $\bSigma:=\mE\bx_i\bx_i^T\ddot{b}(\bx_i^T\bbeta^0)$ and $\bSigma_n:=
\frac{1}{n} \sum_{i=1}^{n}\bx_i\bx_i^T\ddot{b}(\bx_i^T\widehat{\bbeta})$, we know that
\begin{equation}\label{Sigma-}
\| {\bSigma}_n- {\bSigma} \|_{\infty} \leq
\|\ddot{\ell}(\widehat{\bbeta})-\ddot{\ell}(\bbeta^0) \|_{\infty}
+\|\ddot{\ell}(\bbeta^0)-
\mE \{\ddot{\ell}(\bbeta^0)\} \|_{\infty}.
\end{equation}
For the first part of the right hand side of \eqref{Sigma-}, its $jk$-th $(j, k \in [p])$ element is
\begin{equation*}
\left|\frac{1}{n}\sum_{i=1}^n x_{ij}x_{ik}\left[\ddot{b}(\bx_i^T\bbeta^0)
-\ddot{b}(\bx_i^T\widehat{\bbeta}) \right]\right|
\leq \frac{1}{n}\sum_{i=1}^n \left|x_{ij}x_{ik}\right| \cdot C_R
\left| \bx_i^T\bbeta^0-\bx_i^T\widehat{\bbeta}\right| \leq \frac{1}{n}\sum_{i=1}^n C^3\|\bbeta^0- \widehat{\bbeta}\|_1,
\end{equation*}
where the first inequality is due to the inequality $ \bx_i^T\bbeta^0-\bx_i^T\widehat{\bbeta}\le L\|\bbeta^0- \widehat{\bbeta}\|_1 \le L(B+K_\alpha)=:R$ in the proof of and the Lipschitz property of the condition {(C4)(ii)} and the second inequality is due to the condition {(C1)}.

So from Theorem \ref{lassobeta1} we know that
\begin{equation}\label{ell-2}
\|\ddot{\ell}(\widehat{\bbeta})-\ddot{\ell}(\bbeta^0) \|_{\infty}=
O_p\left(s_0\sqrt{\frac{\log (p/s_0)}{n}}\right).
\end{equation}
For the second part of \eqref{Sigma-}, its $jk$-th $(j,k\in[p])$ element is
$$\frac{1}{n}\sum_{i=1}^n \left\{x_{ij}x_{ik}\ddot{b}(\bx_i^T\bbeta^0)-
\mE \left[x_{ij}x_{ik}\ddot{b}(\bx_i^T\bbeta^0)\right] \right\}.$$

From the conditions {(C1),(C2)} and {(C4)(i)}, there is a constant $C_1$, so that for any $j,k\in[p]$, we have $\left |x_{ij}x_{ik}\ddot{b}(\bx_i^T\bbeta^0)\right|<C_1$.
Thus,
\begin{equation*}
\left |x_{ij}x_{ik}\ddot{b}(\bx_i^T\bbeta^0)-
 \mE \left[x_{ij}x_{ik}\ddot{b}(\bx_i^T\bbeta^0)\right] \right|<2C_1.	
\end{equation*}
From the Hoeffding inequality in Lemma \ref{hoeffd}, for $t>0$, we have
\begin{equation*}
\mP \left(\frac{1}{n}\left|\sum_{i=1}^n\left\{x_{ij}x_{ik}\ddot{b}(\bx_i^T\bbeta^0)-
\mE \left[x_{ij}x_{ik}\ddot{b}(\bx_i^T\bbeta^0)\right]  \right\} \right|  \geq t \right) 2\exp\left\{ \frac{-2n^2t^2}{16nC_1^2} \right\} =2\exp\left\{ \frac{-nt^2}{8C_1^2} \right\}.
\end{equation*}
So we have
\begin{equation}\label{ell-4}
\mP \left( \|\ddot{\ell}(\bbeta^0)-
\mE \{\ddot{\ell}(\bbeta^0)\} \|_{\infty}\geq t  \right)
\leq p(p+1)\exp\left\{ \frac{-nt^2}{8C_1^2} \right\}.
\end{equation}
For \eqref{ell-4}, we set $t=O\left( \sqrt{\frac{\log p}{n}}\right)$ and get
\begin{equation}\label{ell-5}
\|\ddot{\ell}(\bbeta^0)-
\mE \{\ddot{\ell}(\bbeta^0)\} \|_{\infty}=
O_p\left(\sqrt{\frac{\log p}{n}}\right).
\end{equation}
Combining \eqref{Sigma-1}, \eqref{Sigma-}, \eqref{ell-2} and \eqref{ell-5} and noticing that $\sqrt{\frac{\log p}{n}} = O\left(s_0\sqrt{\frac{\log (p/s_0)}{n}}\right)$, we know that Lemma \ref{prop2-2} holds.
\end{proof}

Below we use Proposition \ref{prop2} and Proposition \ref{prop3} to bound the remaining items $\Delta_2$ and $\Delta_3$.
\begin{proposition}\label{prop2}
Suppose {(C1)--(C6)} hold. Let $\lambda\asymp\sqrt{\frac{\log (p/s_0)}{n}}$ in $\eqref{lassoes}$,
then we have Lasso estimator $\widehat{\bbeta}$. Let $$\lambda_n\asymp\left\|{\bSigma}^{-1}\right\|_{\mathrm{op}, 1}s_0\sqrt{\frac{\log p}{n}}$$ in $\eqref{Theta}$, then we have $\widehat{{\bTheta}}$, the estimation of ${\bSigma}^{-1}$. Then, we get
$$\left\|\bigl(\widehat{{\bTheta}} - {\bSigma}^{-1}\bigr)
\dot{\ell}(\bbeta^0)\right\|_{\infty} = o_p\left( \frac{1}{\sqrt{n \log p}}\right).$$
\end{proposition}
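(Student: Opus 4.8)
The starting point is the submultiplicativity bound
\[
\bigl\|(\widehat{\bTheta} - \bSigma^{-1})\dot{\ell}(\bbeta^0)\bigr\|_\infty \le \bigl\|\widehat{\bTheta} - \bSigma^{-1}\bigr\|_{\mathrm{op},\infty}\,\bigl\|\dot{\ell}(\bbeta^0)\bigr\|_\infty ,
\]
so it suffices to control the two factors separately. The second one is exactly Lemma \ref{inilasso}: $\|\dot{\ell}(\bbeta^0)\|_\infty = O_p(\sqrt{\log p / n})$. The bulk of the work is the first factor, for which the target is $\|\widehat{\bTheta} - \bSigma^{-1}\|_{\mathrm{op},\infty} = O_p\bigl(\|\bSigma^{-1}\|_{\mathrm{op},1}^2(\max_j r_j)\,s_0\sqrt{\log p / n}\bigr)$; by (C6) this collapses to $O_p(s_0\sqrt{\log p}\cdot\sqrt{\log p/n})$, and multiplying the two rates then gives $O_p(s_0(\log p)^{3/2}/n)$, which I check at the end is $o_p(1/\sqrt{n\log p})$ under (C5).

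\textbf{Controlling $\widehat{\bTheta} - \bSigma^{-1}$.} First I would verify that, on an event of probability tending to one, the true precision matrix is feasible for the CLIME program \eqref{Theta}, i.e. $\|\bSigma_n\bSigma^{-1}\boldsymbol e_j - \boldsymbol e_j\|_\infty \le \lambda_n$ for every $j\in[p]$, once $\lambda_n \asymp \|\bSigma^{-1}\|_{\mathrm{op},1}s_0\sqrt{\log p / n}$. Since $\bSigma$, hence $\bSigma^{-1}$, is symmetric, $\|\bSigma_n\bSigma^{-1} - \boldsymbol I_p\|_\infty = \|\bSigma^{-1}\bSigma_n - \boldsymbol I_p\|_\infty$, and the right-hand side is $O_p(\|\bSigma^{-1}\|_{\mathrm{op},1}s_0\sqrt{\log p/n})$ by Lemma \ref{prop2-2} (using $\log(p/s_0)\le\log p$), so feasibility follows for a suitable constant in $\lambda_n$. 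On this event $\widehat{\bTheta}_j$ minimises $\|\cdot\|_1$ over a set containing $\bSigma^{-1}\boldsymbol e_j$, hence $\|\widehat{\bTheta}_j\|_1 \le \|\bSigma^{-1}\boldsymbol e_j\|_1 \le \|\bSigma^{-1}\|_{\mathrm{op},1}$ for all $j$, which is precisely the event required by Lemma \ref{prop2-1}. Next I would use the identity
\[
\widehat{\bTheta}_j - \bSigma^{-1}\boldsymbol e_j = \bSigma^{-1}\bigl(\bSigma_n\widehat{\bTheta}_j - \boldsymbol e_j\bigr) + \bSigma^{-1}(\bSigma - \bSigma_n)\widehat{\bTheta}_j ,
\]
which follows from $\bSigma^{-1}\bSigma = \boldsymbol I_p$. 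Bounding the first term by $\|\bSigma^{-1}\|_{\mathrm{op},\infty}\lambda_n$ via feasibility of $\widehat{\bTheta}_j$, and the second by $\|\bSigma^{-1}\|_{\mathrm{op},\infty}\|\bSigma - \bSigma_n\|_\infty\|\widehat{\bTheta}_j\|_1 \le \|\bSigma^{-1}\|_{\mathrm{op},1}^2\|\bSigma - \bSigma_n\|_\infty$ (a row-times-column Hölder estimate, then the feasible-set bound on $\|\widehat{\bTheta}_j\|_1$), and recalling $\|\bSigma_n - \bSigma\|_\infty = O_p(s_0\sqrt{\log p/n})$ from the proof of Lemma \ref{prop2-2}, I obtain $\|\widehat{\bTheta} - \bSigma^{-1}\|_\infty = O_p(\|\bSigma^{-1}\|_{\mathrm{op},1}^2 s_0\sqrt{\log p/n})$. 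Finally, Lemma \ref{prop2-1} upgrades the entrywise bound to the operator norm, $\|\widehat{\bTheta} - \bSigma^{-1}\|_{\mathrm{op},\infty} \le 12\|\widehat{\bTheta} - \bSigma^{-1}\|_\infty \max_j r_j$, and (C6) turns the leading factor $\|\bSigma^{-1}\|_{\mathrm{op},1}^2\max_j r_j$ into $O(\sqrt{\log p})$.

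\textbf{Assembling the rate and the main difficulty.} Combining the two pieces gives $\|(\widehat{\bTheta} - \bSigma^{-1})\dot{\ell}(\bbeta^0)\|_\infty = O_p(s_0(\log p)^{3/2}/n)$, and this is $o_p(1/\sqrt{n\log p})$ iff $s_0(\log p)^2 = o(\sqrt n)$; since (C5)(ii) gives $s_0\log p = o(n^{1/4})$ and (C5)(i) gives $\log p = o(n^r) = o(n^{1/4})$, we indeed have $s_0(\log p)^2 = o(n^{1/4}\log p) = o(\sqrt n)$. The step I expect to be most delicate is the CLIME perturbation argument: one must ensure that the only matrix quantities entering the bound are the entrywise norm $\|\bSigma_n - \bSigma\|_\infty$ and the $\ell_1$-operator norm $\|\bSigma^{-1}\|_{\mathrm{op},1}$ (the $\ell_\infty\!\to\!\ell_\infty$ operator norm of $\bSigma_n\bSigma^{-1} - \boldsymbol I_p$ is not small and must be avoided), and the symmetry of $\bSigma$ is essential both to equate $\|\cdot\|_{\mathrm{op},1}$ with $\|\cdot\|_{\mathrm{op},\infty}$ and to bring Lemma \ref{prop2-2}, stated for $\bSigma^{-1}\bSigma_n - \boldsymbol I_p$, to bear on $\bSigma_n\bSigma^{-1} - \boldsymbol I_p$.
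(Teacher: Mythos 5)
Your proposal is correct and follows essentially the same route as the paper: feasibility of $\bSigma^{-1}$ in the CLIME program via Lemma \ref{prop2-2}, the resulting $\ell_1$ bound $\|\widehat{\bTheta}_j\|_1\le\|\bSigma^{-1}_j\|_1$, a perturbation identity yielding $\|\widehat{\bTheta}-\bSigma^{-1}\|_\infty=O_p(\|\bSigma^{-1}\|_{\mathrm{op},1}^2 s_0\sqrt{\log p/n})$, the upgrade to $\|\cdot\|_{\mathrm{op},\infty}$ via Lemma \ref{prop2-1}, and the combination with Lemma \ref{inilasso} and (C5)--(C6). Your decomposition $\widehat{\bTheta}_j-\bSigma^{-1}\boldsymbol e_j=\bSigma^{-1}(\bSigma_n\widehat{\bTheta}_j-\boldsymbol e_j)+\bSigma^{-1}(\bSigma-\bSigma_n)\widehat{\bTheta}_j$ is a cosmetic variant of the paper's $\widehat{\bTheta}(\boldsymbol I_p-\bSigma_n\bSigma^{-1})+(\widehat{\bTheta}\bSigma_n-\boldsymbol I_p)\bSigma^{-1}$ and gives the same rate.
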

\begin{proof}
First define event
\begin{equation}\label{eventA}
 \mathcal{A} :=\{\|{\bSigma}^{-1}{\bSigma}_n -
{I}_p\|_{\infty} \leq \lambda_n\} = \{\|{\bSigma}_n{\bSigma}^{-1}
- {I}_p\|_{\infty} \leq \lambda_n\},
\end{equation}
where
\begin{equation*}
{\bSigma}_n:=-\ddot{\ell}(\widehat{\bbeta})=
\frac{1}{n}\sum_{i=1}^{n}\bx_i\bx_i^T\ddot{b}(\bx_i^T\widehat{\bbeta}).
\end{equation*}
From Lemma \ref{prop2-2}, we know the condition $\lambda_n\asymp\left\|{\bSigma}^{-1}\right\|_{\mathrm{op}, 1} s_0 \sqrt{\frac{\log p}{n}}$ can make the event $\mathcal{A}$ hold with a high probability. From the construction of
$\widehat{{\bTheta}}=(\widehat{{\bTheta}}_{1},\ldots, \widehat{{\bTheta}}_{p})^{T}$
in \eqref{Theta}, we know that, on event $\mathcal{A}$, we have
$$\|\widehat{{\bTheta}}_{j}\|_1\leq \|{\bSigma}^{-1}_{j}\|_1,j\in[p].$$
Then
$\left\|\widehat{{\bTheta}}\right\|_{\mathrm{op}, \infty}
 \leq\left\|{\bSigma}^{-1}\right\|_{\mathrm{op}, \infty}=
 \left\|{\bSigma}^{-1}\right\|_{\mathrm{op},1}.$
Also, by \eqref{Theta} we know that
$\|\widehat{{\bTheta}}{\bSigma}_n - {I}_p\|_{\infty} \leq \lambda_n.$

With the above derivation, on event $\mathcal{A}$, we have decomposition
\begin{equation}\label{prop2.1}
\begin{split}
\|\widehat{{\bTheta}} - {\bSigma}^{-1} \|_{\infty}
&=\left\|\widehat{{\bTheta}}\left( {I}_p-{\bSigma}_n
{\bSigma}^{-1}\right) +
\left( \widehat{{\bTheta}}{\bSigma}_n-{I}_p\right)
{\bSigma}^{-1} \right\|_{\infty} \\
&\leq \left\|\widehat{{\bTheta}}\left( {I}_p-{\bSigma}_n
{\bSigma}^{-1}\right)\right\|_{\infty}+
\left\|\left( \widehat{{\bTheta}}{\bSigma}_n-{I}_p\right)
{\bSigma}^{-1} \right\|_{\infty} \\
&\leq \|\widehat{{\bTheta}} \|_{\mathrm{op}, \infty}\cdot
\|{\bSigma}_n {\bSigma}^{-1}- {I}_p\|_{\infty}+
\|\widehat{{\bTheta}} {\bSigma}_n-{I}_p\|_{\infty}
\cdot \| {\bSigma}^{-1}\|_{\mathrm{op}, 1} \\
&\leq 2 \lambda_n\left\|{\bSigma}^{-1}\right\|_{\mathrm{op},1}.
\end{split}
\end{equation}
So by \eqref{prop2.1} and Lemma \ref{prop2-1}, on the event $\mathcal{A}$, we have
\begin{equation}\label{prop2.2}
\begin{split}
\left\|\bigl(\widehat{{\bTheta}} - {\bSigma}^{-1}\bigr) \dot{\ell}(\bbeta^0)\right\|_{\infty} &\leq \left\|\widehat{{\bTheta}} - {\bSigma}^{-1}\right\|_{\mathrm{op}, 1} \| \dot{\ell}(\bbeta^0)\|_{\infty} \\
[\text{Lemma}~\ref{prop2-1}]&\leq 12\| \widehat{{\bTheta}} - {\bSigma}^{-1} \|_{\infty} \|\dot{\ell}(\bbeta^0)\|_{\infty}\max_{j=1,\ldots,p} r_j \\
[\eqref{prop2.1}] &\leq 24\lambda_n\left\|{\bSigma}^{-1}\right\|_{\mathrm{op},1} \|\dot{\ell}(\bbeta^0)\|_{\infty}\max_{j=1,\ldots,p} r_j.
\end{split}
\end{equation}
According to Lemma \ref{inilasso}, we know that
$\|\dot{\ell}(\bbeta^0) \|_{\infty}=O_p\left(\sqrt{\frac{\log p}{n}}\right).$ By assumptions {(C5)} and {(C6)}, we get
\begin{equation*}
\begin{split}
\lambda_n \left\|{\bSigma}^{-1}\right\|_{\mathrm{op},1} \|\dot{\ell}(\bbeta^0)\|_{\infty} \max_{j=1,\ldots,p} r_j & \asymp s_0 \frac{\log p}{n} \left\|\bSigma^{-1} \right\|_{\mathrm{op}, 1}^{2} \max_{j=1, \ldots, p} r_j \\
&= o_p\left(\frac{1}{\sqrt{n}} \cdot \frac{1}{n^{1 / 4}} \left\|\bSigma ^{- 1} \right\|_{\mathrm{op}, 1}^2 \max_{j = 1, \ldots ,p} r_j \right)
= o_p\left( \frac{1}{\sqrt{n \log p}}\right).
\end{split}
\end{equation*}
Therefore, $\left\|\bigl(\widehat{{\bTheta}} -
{\bSigma}^{-1}\bigr)
\dot{\ell}(\bbeta^0)\right\|_{\infty} = o_p\left( \frac{1}{\sqrt{n \log p}}\right).$
\end{proof}

\begin{proposition}\label{prop3}
Suppose conditions {(C1)--(C6)} hold.
We set
 $\lambda\asymp\sqrt{\frac{\log (p/s_0)}{n}}$ in $\eqref{lassoes}$
and get the Lasso estimator $\widehat{\bbeta}$. In $\eqref{Theta}$, we set
$$\lambda_n\asymp\left\|{\bSigma}^{-1}\right\|_{\mathrm{op}, 1}s_0\sqrt{\frac{\log p}{n}}$$
and get the estimator of ${\bSigma}^{-1}$, $\widehat{{\bTheta}}$. Then we have
$$\left\|\left(\widehat{{\bTheta}}{W}(\widetilde{\bbeta}) +
{I}_p\right)\left(\widehat{\bbeta} - \bbeta^0\right) \right\|_{\infty}
=o_p\left( \frac{1}{\sqrt{n \log p}}\right).$$
\end{proposition}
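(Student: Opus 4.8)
The goal is to control the remainder $\Delta_3$ in \eqref{decom}, and the plan is to peel off the Lasso error through the elementary inequality $\|A\boldsymbol{v}\|_\infty\le\|A\|_\infty\|\boldsymbol{v}\|_1$, with $\|A\|_\infty$ the entrywise maximum. This yields
\begin{equation*}
\left\|\bigl(\widehat{\bTheta}\boldsymbol{W}(\widetilde{\bbeta})+\boldsymbol{I}_p\bigr)\bigl(\widehat{\bbeta}-\bbeta^0\bigr)\right\|_\infty\le\bigl\|\widehat{\bTheta}\boldsymbol{W}(\widetilde{\bbeta})+\boldsymbol{I}_p\bigr\|_\infty\cdot\bigl\|\widehat{\bbeta}-\bbeta^0\bigr\|_1,
\end{equation*}
and Theorem \ref{lassobeta1} makes the second factor $O_p\!\left(s_0\sqrt{\log p/n}\right)$. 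It therefore suffices to show the matrix factor is $O_p\!\left(\|\bSigma^{-1}\|_{\mathrm{op},1}s_0\sqrt{\log p/n}\right)$, after which (C5)(ii) and (C6) will force the product below $(n\log p)^{-1/2}$.

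For the matrix factor I would use $\boldsymbol{W}(\widehat{\bbeta})=\ddot\ell(\widehat{\bbeta})=-\bSigma_n$ to split $\widehat{\bTheta}\boldsymbol{W}(\widetilde{\bbeta})+\boldsymbol{I}_p=\widehat{\bTheta}\bigl[\boldsymbol{W}(\widetilde{\bbeta})+\bSigma_n\bigr]-\bigl(\widehat{\bTheta}\bSigma_n-\boldsymbol{I}_p\bigr)$. On the event $\mathcal{A}$ of \eqref{eventA}, which holds with probability tending to one by Lemma \ref{prop2-2} under the prescribed $\lambda_n\asymp\|\bSigma^{-1}\|_{\mathrm{op},1}s_0\sqrt{\log p/n}$, the CLIME constraint \eqref{Theta} gives $\|\widehat{\bTheta}\bSigma_n-\boldsymbol{I}_p\|_\infty\le\lambda_n$, and, exactly as in \eqref{prop2.1}, $\|\widehat{\bTheta}\|_{\mathrm{op},\infty}\le\|\bSigma^{-1}\|_{\mathrm{op},1}$; hence $\|\widehat{\bTheta}[\boldsymbol{W}(\widetilde{\bbeta})+\bSigma_n]\|_\infty\le\|\bSigma^{-1}\|_{\mathrm{op},1}\|\boldsymbol{W}(\widetilde{\bbeta})+\bSigma_n\|_\infty$.

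The central estimate is then the entrywise bound on $\boldsymbol{W}(\widetilde{\bbeta})+\bSigma_n$, whose $(j,k)$ entry equals $\frac1n\sum_{i=1}^n x_{ij}x_{ik}\bigl[\ddot b(\bx_i^T\widehat{\bbeta})-\ddot b(\bx_i^T\widetilde{\bbeta}_j)\bigr]$. Since $\widetilde{\bbeta}_j$ lies on the segment joining $\widehat{\bbeta}$ and $\bbeta^0$, we get $|\bx_i^T\widehat{\bbeta}-\bx_i^T\widetilde{\bbeta}_j|\le|\bx_i^T(\widehat{\bbeta}-\bbeta^0)|\le L\|\widehat{\bbeta}-\bbeta^0\|_1\le L(B+K_\alpha)=R$ by (C1), (C2) and the bound $\|\widehat{\bbeta}\|_1\le K_\alpha$ established inside the proof of Theorem \ref{lassobeta1}; the Lipschitz condition (C4)(ii) and $|x_{ij}x_{ik}|\le L^2$ then give $\|\boldsymbol{W}(\widetilde{\bbeta})+\bSigma_n\|_\infty\le C_R L^3\|\widehat{\bbeta}-\bbeta^0\|_1=O_p\!\left(s_0\sqrt{\log p/n}\right)$, the bound being uniform in $j$ so that the dependence of $\widetilde{\bbeta}_j$ on $j$ is immaterial.

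Assembling the pieces gives $\|\widehat{\bTheta}\boldsymbol{W}(\widetilde{\bbeta})+\boldsymbol{I}_p\|_\infty=O_p\!\left(\|\bSigma^{-1}\|_{\mathrm{op},1}s_0\sqrt{\log p/n}\right)$, hence $\|\Delta_3\|_\infty=O_p\!\left(\|\bSigma^{-1}\|_{\mathrm{op},1}s_0^2\log p/n\right)$. What remains is rate arithmetic: (C6) yields $\|\bSigma^{-1}\|_{\mathrm{op},1}=O((\log p)^{1/4})$ since $r_j\ge1$, and (C5)(ii) yields $s_0^2=o(n^{1/2}/(\log p)^2)$, so $\|\bSigma^{-1}\|_{\mathrm{op},1}s_0^2(\log p)^{3/2}/\sqrt n=o((\log p)^{-1/4})\to0$, i.e. $\|\Delta_3\|_\infty=o_p\!\left((n\log p)^{-1/2}\right)$. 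I expect the only genuinely delicate point to be this bookkeeping — keeping $\lambda_n$, the $\ell_1$-oracle rate, and the precision-matrix operator norm aligned so that their product clears the $(n\log p)^{-1/2}$ barrier; the rest is a routine assembly of facts already proved above.
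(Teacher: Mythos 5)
Your argument is correct, and it reaches the conclusion by a decomposition that is genuinely different from the paper's. Both proofs start identically, peeling off the Lasso error via $\|\Delta_3\|_\infty\le\|\widehat{\bTheta}\boldsymbol{W}(\widetilde{\bbeta})+\boldsymbol{I}_p\|_\infty\,\|\widehat{\bbeta}-\bbeta^0\|_1$, but they diverge on the matrix factor. The paper writes $\widehat{\bTheta}\boldsymbol{W}(\widetilde{\bbeta})+\boldsymbol{I}_p=(\widehat{\bTheta}-\bSigma^{-1})(\boldsymbol{W}(\widetilde{\bbeta})+\bSigma)+\bSigma^{-1}(\boldsymbol{W}(\widetilde{\bbeta})+\bSigma)-(\widehat{\bTheta}-\bSigma^{-1})\bSigma$, centering the Hessian at the \emph{population} matrix $\bSigma$; this forces it to invoke Lemma \ref{prop2-1} to convert $\|\widehat{\bTheta}-\bSigma^{-1}\|_\infty$ into an $\mathrm{op},\infty$ bound (which is where the factor $\max_j r_j$ enters), and additionally to control $\|\ddot{\ell}(\bbeta^0)+\bSigma\|_\infty$ by Hoeffding's inequality as in \eqref{ell-5}. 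You instead center at the \emph{empirical} Hessian $\bSigma_n$, writing $\widehat{\bTheta}[\boldsymbol{W}(\widetilde{\bbeta})+\bSigma_n]-(\widehat{\bTheta}\bSigma_n-\boldsymbol{I}_p)$, so the second term is killed directly by the CLIME feasibility constraint and the first by $\|\widehat{\bTheta}\|_{\mathrm{op},\infty}\le\|\bSigma^{-1}\|_{\mathrm{op},1}$ on the event $\mathcal{A}$; the Lipschitz step comparing $\widetilde{\bbeta}_j$ to $\widehat{\bbeta}$ (rather than to $\bbeta^0$) is equally valid since $\widetilde{\bbeta}_j$ lies on the segment between them. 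The payoff is a cleaner and slightly sharper intermediate bound, $O_p(\|\bSigma^{-1}\|_{\mathrm{op},1}s_0\sqrt{\log p/n})$ versus the paper's $O_p(\|\bSigma^{-1}\|_{\mathrm{op},1}^2\max_j r_j\,s_0\sqrt{\log p/n})$, with no use of Lemma \ref{prop2-1} or the Hoeffding concentration of $\ddot{\ell}(\bbeta^0)$; your closing rate arithmetic under (C5)(ii) and (C6) is also correct (and would go through a fortiori with the paper's weaker bound). The one point worth stating explicitly rather than in passing is that $\P(\mathcal{A})\to1$ requires the implicit constant in $\lambda_n$ to dominate the $O_p$ constant from Lemma \ref{prop2-2} — but the paper glosses over the same point.
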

\begin{proof}
First we have decomposition
\begin{equation}\label{prop3-1}
\begin{split}
&\| \widehat{{\bTheta}}{W}(\widetilde{\bbeta}) +
{I}_p\|_{\infty}  \\
=&\left\| \left( \widehat{{\bTheta}}-{\bSigma}^{-1}\right)
\left({W}(\widetilde{\bbeta})+ {\bSigma}\right)+
{\bSigma}^{-1}\left({W}(\widetilde{\bbeta})+ {\bSigma} \right)-
\left( \widehat{{\bTheta}}-{\bSigma}^{-1}\right){\bSigma}
\right\|_{\infty}\\
\leq&\left\|\widehat{{\bTheta}}-{\bSigma}^{-1}\right\|_{\mathrm{op}, \infty}
\left\|{W}(\widetilde{\bbeta})+ {\bSigma}\right\|_{\infty}+
\left\|{\bSigma}^{-1}\right\|_{\mathrm{op}, 1}
\left\|{W}(\widetilde{\bbeta})+ {\bSigma}\right\|_{\infty}+
\left\|\widehat{{\bTheta}}-{\bSigma}^{-1}\right\|_{\mathrm{op}, \infty}
\left\|{\bSigma} \right\|_{\infty} \\
\leq&\left(24\lambda_n\max_{j\in[p]} r_j +1\right) \left\|{\bSigma}^{-1}\right\|_{\mathrm{op}, 1}
\left\|{W}(\widetilde{\bbeta})+ {\bSigma}\right\|_{\infty}+
24\lambda_n\max_{j\in[p]} r_j \left\|{\bSigma}^{-1}\right\|_{\mathrm{op}, 1} O(1) \\
\leq&\left(24\lambda_n\max_{j\in[p]} r_j +1\right) \left\|{\bSigma}^{-1}\right\|_{\mathrm{op}, 1}
\left\{ \left\|{W}(\widetilde{\bbeta})-\ddot{\ell}(\bbeta^0)\right\|_{\infty}+
\left\|\ddot{\ell}(\bbeta^0)+{\bSigma}\right\|_{\infty}
\right\} \\
&+24\lambda_n\max_{j\in[p]} r_j \left\|{\bSigma}^{-1}\right\|_{\mathrm{op}, 1} O(1).
\end{split}
\end{equation}
The first part of the second inequality in above expression uses the derivation in \eqref{prop2.2}.
For the second part, note that
$${\bSigma}_{jk}=\mE[ x_{ij}x_{ik}\ddot{b}(\bx_i^T\bbeta^0)],
\text{where}~j,k\in[p].$$
So by the condition {(C1),(C2),(C4)(i)}, we have $\left\|{\bSigma} \right\|_{\infty}=O(1).$

For the last two lines of $\eqref{prop3-1}$, on one hand we have
\begin{equation*}
\left\|{W}(\widetilde{\bbeta})-\ddot{\ell}(\bbeta^0)\right\|_{\infty}
=\max_{j\in[p]} \left\| \ddot{\ell}_j(\widetilde{\bbeta}_j)-\ddot{\ell}_j(\bbeta^0)
\right\|_{\infty},	
\end{equation*}
whose $(k, l)$-th $(k,l\in[p])$ element is
\begin{equation}\label{prop3-2}
\begin{split}
\left | \frac{1}{n}\sum_{i=1}^n x_{ik}x_{il} \left[\ddot{b}(\bx_i^T\bbeta^0)
-\ddot{b}(\bx_i^T\widetilde{\bbeta}) \right]\right|&\leq \frac{1}{n}\sum_{i=1}^n \left|x_{ik}x_{il}\right|
\cdot C_R\left| \bx_i^T\bbeta^0-\bx_i^T\widetilde{\bbeta}\right|  \\
&\leq \frac{1}{n}\sum_{i=1}^n C^3\|\widetilde{\bbeta}-\bbeta^0\|_1 \\
&\leq C^3\|\widehat{\bbeta}-\bbeta^0\|_1= O_p\left(s_0\sqrt{\frac{\log (p/s_0)}{n}} \right).
\end{split}
\end{equation}
The first inequality here uses the Lipschitz continuity property of the condition {(C4)(ii)}. The third inequality is due to the definition of $\widetilde{\bbeta}$ in \eqref{Taylor},
and the last equation is due to Theorem \ref{lassobeta1} .

On the other hand, from \eqref{ell-5} we know that
\begin{equation}\label{prop3-3}
\left\|\ddot{\ell}(\bbeta^0)+{\bSigma}\right\|_{\infty}=O_p\left(\sqrt{\frac{\log p}{n}} \right).
\end{equation}
So by \eqref{prop3-1}, \eqref{prop3-2}, \eqref{prop3-3}, we have
\begin{equation*}
\| \widehat{{\bTheta}}{W}(\widetilde{\bbeta}) +
{I}_p\|_{\infty} = O_p \left(s_0 \sqrt{\frac{\log p}{n}} \cdot \left\|{\bSigma}^{-1}\right\|_{\mathrm{op}, 1}^2 \max_{j\in[p]} r_j \right).
\end{equation*}

Finally by {(C6)} and Theorem \ref{lassobeta1},
\begin{equation*}
\begin{split}
\left\|\left(\widehat{{\bTheta}}{W}(\widetilde{\bbeta}) + {I}_p\right)\left(\widehat{\bbeta} - \bbeta^0\right) \right\|_{\infty}
\leq \left \| \widehat{{\bTheta}}{W}(\widetilde{\bbeta}) + {I}_p \right\|_{\infty} \| \widehat{\bbeta} - \bbeta^0 \|_1 & = O_p \left(s_0^2 \frac{\log p}{n} \cdot \left\|{\bSigma}^{-1}\right\|_{\mathrm{op}, 1}^2 \max_{j\in[p]} r_j \right) \\
& = o_p\left( \frac{1}{\sqrt{n \log p}}\right)
\end{split}
\end{equation*}
\end{proof}

\subsection{Complete the proof of Proposition \ref{main-thm1}}
\begin{proof}
For the first part of Proposition \ref{main-thm1}, by \eqref{decom} we have
\begin{equation}\label{decom-f}
\begin{split}
\sqrt{n}\bc^T(\widehat{\bbeta}^d - \bbeta^0)
=&\sqrt{n}\bc^T {\bSigma}^{-1} \dot{\ell}\left(\bbeta^{0}\right)+
\sqrt{n}\bc^T\left(\widehat{{\bTheta}}-
{\bSigma}^{-1}\right) \dot{\ell}\left(\bbeta^{0}\right)\\
&+\sqrt{n}\bc^T\left\{\widehat{{\bTheta}} {W}(\widetilde{\bbeta})
+{I}_p\right\}\left(\widehat{\bbeta}-\bbeta^{0}\right).
\end{split}
\end{equation}
In \eqref{decom-f}, using $\|\bc \|_1=1$, Proposition \ref{prop2} and Proposition \ref{prop3},  we know that
\begin{align*}
\sqrt{n}\bc^T\left(\widehat{{\bTheta}}-
{\bSigma}^{-1}\right) \dot{\ell}\left(\bbeta^{0}\right) &= o_p( 1),\\
\sqrt{n}\bc^T\left\{\widehat{{\bTheta}} {W}(\widetilde{\bbeta})
+{I}_p\right\}\left(\widehat{\bbeta}-\bbeta^{0}\right) &=o_p(1).	
\end{align*}

By Proposition \ref{prop1} and Slutsky's theorem \citep{Shao2003} we know that
\begin{equation*}
\sqrt{n}\bc^T(\widehat{\bbeta}^d - \bbeta^0)
\stackrel{d}{\to} \mathcal{N}(0, \sigma^2).	
\end{equation*}

For the second part of Corollay \ref{main-thm1}, denote
$a = \sqrt{n}\bc^T(\widehat{\bbeta}^d - \bbeta^0),$ and then
\begin{equation}\label{thm1-ex0}
\begin{split}
\frac{a}
{\sqrt{\bc^T\widehat{{\bTheta}}\bc}} &=
\frac{a}{\sigma} \cdot
\frac{\sigma}{\sqrt{\bc^T\widehat{{\bTheta}}\bc}}
= \frac{a}{\sigma}\left(1+ \frac{\sigma-
\sqrt{\bc^T\widehat{{\bTheta}}\bc}}
{\sqrt{\bc^T\widehat{{\bTheta}}\bc}} \right) \\
&= \frac{a}{\sigma}
\left(1+ \frac{\sigma^2-
\bc^T\widehat{{\bTheta}}\bc}
{\sqrt{\bc^T\widehat{{\bTheta}}\bc}
\left( \sigma+\sqrt{\bc^T\widehat{{\bTheta}}\bc}\right)
} \right).
\end{split}
\end{equation}
The condition of Proposition \ref{main-thm1} gives $\bc^T{\bSigma}^{-1}\bc
\rightarrow \sigma^2 \in (0,\infty)$, and thus
\begin{equation*}
\begin{split}
\left|\bc^T\widehat{{\bTheta}}\bc-
\bc^T{\bSigma}^{-1}\bc \right|
&=\left|\bc^T \left(\widehat{{\bTheta}}-
{\bSigma}^{-1}\right)c \right|
\leq \left\|\widehat{{\bTheta}}-
{\bSigma}^{-1}\right\|_{\infty} \\
&\leq 2 \lambda_n\left\|{\bSigma}^{-1}\right\|_{\mathrm{op},1}
=o_p(1),
\end{split}
\end{equation*}
where the second inequality is due to \eqref{prop2.1}, and the last equation uses the conditions {(C5)(ii)} and {(C6)}. Therefore,
\begin{equation*}
\frac{\sigma^2-\bc^T\widehat{{\bTheta}}\bc}{\sqrt{\bc^T\widehat{{\bTheta}}\bc}\left( \sigma+\sqrt{\bc^T\widehat{{\bTheta}}\bc}\right)} =o_p(1).	
\end{equation*}
Finally substitute it into \eqref{thm1-ex0}, from Slutsky's theorem, we know that
\begin{equation*}
\frac{a}
{\sqrt{\bc^T\widehat{{\bTheta}}\bc}} =
\frac{a}{\sigma}\left(1+o_p(1) \right)
 \stackrel{d}{\to} \mathcal{N}(0, 1),
\end{equation*}
which completes the proof.
\end{proof}

\section{Proofs of Theorem \ref{main-thm2} and Theorem \ref{main-thm3}}
\subsection{Construct Intermediate Variables}
To derive properties of statistic $T_j, j \in [p]$ defined in \eqref{standard1}, we construct statistics
\begin{equation}\label{standard2}
\begin{split}
T'_j &=\frac{\sqrt{n} (\widehat{\beta}^d_j-\beta^0_j)}{\sqrt{\widehat{\Theta}_{jj}}},~~\widetilde{T}_j=\frac{1}{\sqrt{n\widehat{\Theta}_{jj}}}
 \sum_{m=1}^{n} \bTheta_j^T \bx_m\left(y_m-\dot{b}
(\bx_m^T\boldsymbol{\beta}^0)\right),\\
\check{T}_j&=\frac{1}{\sqrt{n\Theta_{jj}}}
 \sum_{m=1}^{n} \bTheta_j^T \bx_m\left(y_m-\dot{b}
(\bx_m^T\boldsymbol{\beta}^0)\right).
\end{split}
\end{equation}
Then, for $1\leq j,k\leq p$,
\begin{equation}\label{checkTmoment}
\begin{split}
\mE\check{T}_j & =\frac{1}{\sqrt{n\Theta_{jj}}}\sum_{m=1}^{n}
\mE \left\{\bTheta_j^T \bx_m\mE\left(y_m-\dot{b}
(\bx_m^T\boldsymbol{\beta}^0)\bigg| \bx_m\right)\right\}=0,\\
\mE\check{T}^2_j&=\frac{1}{n\Theta_{jj}}\sum_{m=1}^{n}
\mE \left\{\bTheta_j^T \bx_m\bx_m^T\bTheta_j\mE\left[\left(y_m-\dot{b}
(\bx_m^T\boldsymbol{\beta}^0)\right)^2\bigg| \bx_m\right]\right\}\\
&=\frac{1}{n\Theta_{jj}}\sum_{m=1}^{n}
\mE \left\{\bTheta_j^T \bx_m\bx_m^T
\ddot{b}(\bx_m^T\boldsymbol{\beta}^0)\bTheta_j\right\}=1,\\
\mE(\check{T}_j\cdot\check{T}_k) &=
\frac{1}{n\sqrt{\Theta_{jj}\Theta_{kk}}}\sum_{m=1}^{n}
\mE \left\{\bTheta_j^T \bx_m\bx_m^T\bTheta_k\mE\left[\left(y_m-\dot{b}
(\bx_m^T\boldsymbol{\beta}^0)\right)^2\bigg| \bx_m\right]\right\} \\
&=\frac{1}{n\sqrt{\Theta_{jj}\Theta_{kk}}}\sum_{m=1}^{n}
\mE \left\{\bTheta_j^T \bx_m\bx_m^T
\ddot{b}(\bx_m^T\boldsymbol{\beta}^0)\bTheta_k\right\}=\Theta_{jk}^0.
\end{split}
\end{equation}
Proposition \ref{prop1} indicates as $n,p\rightarrow \infty$, $\check{T}_j \stackrel{d}{\to} \mathcal{N}(0, 1)$ by taking $\bc = \bm{1}_p$.

The following lemma is similar to Lemma 7 \citep{ma2020global}, which illustrates $T'_j, \widetilde{T}_j$ and $\check{T}_j$ are close to each other.
\begin{lemma}\label{cailemma7}
If conditions for Proposition \ref{main-thm1} and {(C8)} hold, then
\begin{equation*}
\max_{j\in [p]}|T'_j-\widetilde{T}_j|=o_p\left(\frac{1}{\sqrt{\log p}}\right),
\max_{j\in [p]}|\widetilde{T}_j-\check{T}_j|=o_p\left(\frac{1}{\sqrt{\log p}}\right).
\end{equation*}
\end{lemma}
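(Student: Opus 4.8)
The plan is to obtain both estimates by reading off the coordinatewise version of the decomposition \eqref{decom} of the debiased estimator together with the fluctuation bounds already proved in Propositions \ref{prop2} and \ref{prop3}, after first checking that the estimated diagonal entries are uniformly well behaved. For the latter I would use \eqref{prop2.1}, which gives $\|\widehat{\bTheta}-\bSigma^{-1}\|_{\infty}\le 2\lambda_n\|\bSigma^{-1}\|_{\mathrm{op},1}$; this is $o_p(1)$ under {(C5)(ii)} and {(C6)}, so $\max_{j\in[p]}|\widehat\Theta_{jj}-\Theta_{jj}|=o_p(1)$, and since $\Theta_{jj}\in[1/c_2,1/c_1]$ by {(C8)} we conclude that with probability tending to one $\widehat\Theta_{jj}\ge 1/(2c_2)$ for every $j$, hence $1/\sqrt{\widehat\Theta_{jj}}=O_p(1)$ uniformly in $j$.

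For $\max_j|T'_j-\widetilde T_j|$ I would apply \eqref{decom} with $\bc=\boldsymbol e_j$ (admissible since $\|\boldsymbol e_j\|_1=1$). Because $\boldsymbol e_j^T\bSigma^{-1}\dot\ell(\bbeta^0)=\bTheta_j^T\dot\ell(\bbeta^0)=\tfrac1n\sum_{m=1}^n\bTheta_j^T\bx_m(y_m-\dot b(\bx_m^T\bbeta^0))$, the quantity $\sqrt n(\widehat{\beta}^d_j-\beta^0_j)-\tfrac1{\sqrt n}\sum_{m=1}^n\bTheta_j^T\bx_m(y_m-\dot b(\bx_m^T\bbeta^0))$ is exactly $\sqrt n(\Delta_2+\Delta_3)$ with $\bc=\boldsymbol e_j$, and so, taking the maximum over $j$, is at most $\sqrt n\|(\widehat\bTheta-\bSigma^{-1})\dot\ell(\bbeta^0)\|_\infty+\sqrt n\|(\widehat\bTheta\boldsymbol W(\widetilde\bbeta)+\boldsymbol I_p)(\widehat\bbeta-\bbeta^0)\|_\infty$. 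Propositions \ref{prop2} and \ref{prop3} bound each summand by $o_p(1/\sqrt{\log p})$. Since $T'_j-\widetilde T_j$ equals $1/\sqrt{\widehat\Theta_{jj}}$ times this difference, the uniform bound on $1/\sqrt{\widehat\Theta_{jj}}$ from the previous paragraph gives $\max_j|T'_j-\widetilde T_j|=o_p(1/\sqrt{\log p})$.

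For $\max_j|\widetilde T_j-\check T_j|$ I would use the exact relation $\widetilde T_j=\sqrt{\Theta_{jj}/\widehat\Theta_{jj}}\,\check T_j$ read off from \eqref{standard2}, so that $\widetilde T_j-\check T_j=\bigl(\sqrt{\Theta_{jj}/\widehat\Theta_{jj}}-1\bigr)\check T_j$ with $\bigl|\sqrt{\Theta_{jj}/\widehat\Theta_{jj}}-1\bigr|=O_p\bigl(\max_j|\widehat\Theta_{jj}-\Theta_{jj}|\bigr)$. Plugging $\lambda_n\asymp\|\bSigma^{-1}\|_{\mathrm{op},1}s_0\sqrt{\log p/n}$ into \eqref{prop2.1}, and using that {(C6)} together with $r_j\ge 1$ forces $\|\bSigma^{-1}\|_{\mathrm{op},1}^2=O(\sqrt{\log p})$, as well as {(C5)(ii)}, I get $\max_j|\widehat\Theta_{jj}-\Theta_{jj}|=o_p(n^{-1/4})$. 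Separately, $\max_{j\in[p]}|\check T_j|=O_p(\sqrt{\log p})$, which follows from the sub-Gaussian tail bound (Lemma \ref{rancon}) applied conditionally on the design together with a union bound over $j$, the variance proxies of the $\check T_j$ being $O_p(1)$ uniformly in $j$ by {(C1)}, {(C4)(i)}, {(C8)} and the lower bound on $\ddot b$. Multiplying, $\max_j|\widetilde T_j-\check T_j|=o_p(n^{-1/4}\sqrt{\log p})$, which is $o_p(1/\sqrt{\log p})$ since {(C5)(i)} gives $\log p=o(n^{r})$ with $r<1/5$, hence $n^{-1/4}\log p\to 0$.

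The step I expect to be the main obstacle is this second bound: one must simultaneously push the fluctuation of the $p$ diagonal entries $\widehat\Theta_{jj}$ down to $o_p(1/\log p)$ using the precision-matrix conditions {(C5)}--{(C6)}, and control the maximum over $j\in[p]$ of the correlated, only approximately Gaussian statistics $\check T_j$ by $O_p(\sqrt{\log p})$ — the latter requiring a uniform (in $j$) sub-Gaussian variance-proxy bound for the $\check T_j$. The target rate $o_p(1/\sqrt{\log p})$ is exactly the product of these two rates, so there is essentially no slack to exploit, and the bookkeeping of the constants and the sparsity exponents is where the argument is actually earned.
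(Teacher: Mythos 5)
Your proposal is correct and follows essentially the same route as the paper: the first bound comes from reading the decomposition \eqref{decom} coordinatewise (equivalently with $\bc=\boldsymbol{e}_j$) and invoking Propositions \ref{prop2} and \ref{prop3} together with the uniform lower bound on $\widehat\Theta_{jj}$, and the second from the identity $\widetilde T_j-\check T_j=\bigl(\sqrt{\Theta_{jj}/\widehat\Theta_{jj}}-1\bigr)\check T_j$ combined with $\max_j|\widehat\Theta_{jj}-\Theta_{jj}|=o_p(n^{-1/4})$ and $\max_j|\check T_j|=O_p(\sqrt{\log p})$, the paper obtaining the last estimate via the maximal inequality of Lemma \ref{maxi} plus Markov rather than your conditional sub-Gaussian union bound. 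The one slightly optimistic step is your claim that the conditional variance proxies of the $\check T_j$ are $O_p(1)$ uniformly in $j$ --- the deterministic bound $|\bTheta_j^T\bx_m|\le L\|\bSigma^{-1}\|_{\mathrm{op},1}$ only gives $O(\sqrt{\log p})$ under (C6) unless you add a uniform concentration step for $\tfrac1n\sum_m(\bTheta_j^T\bx_m)^2$ --- but even the resulting weaker bound $\max_j|\check T_j|=O_p((\log p)^{3/4})$ still closes the argument, since (C5)(i) forces $n^{-1/4}(\log p)^{5/4}\to 0$.
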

\begin{proof}
From \eqref{prop2.1},  we know that
\begin{center}
$\|\widehat{\bTheta} - \bTheta \|_{\infty}
\leq\lambda_n\left\|\bSigma^{-1}\right\|_{\mathrm{op},1}$ and $\lambda_n\asymp\left\|\bSigma^{-1}\right\|_{\mathrm{op}, 1}s_0\sqrt{\frac{\log p}{n}}$.
\end{center}
 Conditions {(C5)} and {(C6)} tell us
\begin{equation}\label{thetaomega}
\max_{j\in [p]}|\widehat{\Theta}_{jj}-\Theta_{jj}|=
o_p\left(\frac{1}{n^{1/4}}\right).
\end{equation}
Remark under the Condition (C8)  and \eqref{thetaomega} imply that there exist constants $0<\tilde{c}_1<\tilde{c}_2$, so that as $p\rightarrow \infty$,
\begin{equation}\label{theta_hat}
\mP\left(\max_{j\in [p]}\widehat{\Theta}_{jj} \in [\tilde{c}_1,\tilde{c}_2]\right) \rightarrow 1.
\end{equation}
Also, from \eqref{decom}, \eqref{maindecom}, Proposition \ref{prop2} and Proposition \ref{prop3}, we know that
\begin{equation*}
\sqrt{n}(\boldsymbol{\widehat{\beta}}^d-\boldsymbol{\beta}^0)=\frac{1}{\sqrt{n}} \sum_{m=1}^{n} \bTheta \bx_m\left(y_m-\dot{b} (\bx_m^T\boldsymbol{\beta}^0)\right)+\Delta,
\end{equation*}
with $\|\Delta\|_{\infty}=o_p(\frac{1}{\sqrt{\log p}}).$ Hence, combining it with \eqref{theta_hat}, we know that
\begin{equation*}
\max_{j\in [p]}|T'_j-\widetilde{T}_j|=\max_{j\in [p]}\left|\frac{\Delta_j}{\sqrt{\widehat{\Theta}_{jj}}} \right|
=o_p\left(\frac{1}{\sqrt{\log p}}\right).
\end{equation*}

For the second part of Lemma, use Lemma \ref{maxi} with $X_{mj}=\frac{1}{\sqrt{n\Theta_{jj}}}\bTheta_j^T \bx_m\left(y_m-\dot{b}
(\bx_m^T\boldsymbol{\beta}^0)\right)$. Then, $\check{T}_j=\sum_{m=1}^{n}X_{mj}$ and
\begin{equation}\label{ETT1}
\begin{split}
\mE\left[\max_{j\in [p]}|\check{T}_j|\right]=
\mE\left[\max _{j\in [p]}\left|\sum_{m=1}^{n} X_{mj}\right|\right] &\leq 4(\log p)^{1 / 2} \max _{j\in [p]} \mE\left[\left(\sum_{m=1}^{n} X_{mj}^{2}\right)^{1 / 2}\right]\\
&\leq [2\log(2p)]^{1 / 2} \max _{j\in [p]}\left[ \mE\left(\sum_{m=1}^{n} X_{mj}^{2}\right)\right]^{1 / 2}\\
&=[2\log(2p)]^{1 / 2}\sum_{m=1}^{n}\frac{\mE\bTheta_j^T\bx_m\bx_m^T(y_m-\dot{b}(\bx_m^T\boldsymbol{\beta}^0))^2 \bTheta_j}{n\Theta_{jj}}\\
&=[2\log(2p)]^{1 / 2},
\end{split}
\end{equation}
where the last equality is because of $\bSigma:=\mE\bx_i\bx_i^T\ddot{b}(\bx_i^T\bbeta^0).$
By \eqref{ETT1} and Markov's inequality,
\begin{equation}\label{checkT}
\max_{j\in [p]}|\check{T}_j|=O_p(\sqrt{\log p}).
\end{equation}
Thus, with \eqref{thetaomega}, \eqref{theta_hat}, \eqref{checkT}, Remark under the Condition (C8) and the equation
\begin{equation*}
\begin{split}
|\widetilde{T}_j-\check{T}_j| &=|\check{T}_j|\cdot\left|1-
\sqrt{\frac{\widehat{\Theta}_{jj}}{\Theta_{jj}}} \right|=|\check{T}_j|\cdot\frac{|\Theta_{jj}-\widehat{\Theta}_{jj}|}
{\sqrt{\Theta_{jj}}\left(\sqrt{\Theta_{jj}}+\sqrt{\widehat{\Theta}_{jj}}\right)},
\end{split}
\end{equation*}
we have
\begin{equation*}
\max_{j\in [p]}|\widetilde{T}_j-\check{T}_j|=o_p\left(\frac{1}{\sqrt{\log p}}\right).
\end{equation*}
Here, we use $p=o(e^{n^{1 / 4}})$ assumed in {(C5)(i)}.
\end{proof}

\subsection{Proof of Theorem \ref{main-thm2}}
\begin{proof}
Partly drawing on the proof of Theorem 3.1 of \cite{javanmard2019false} and Theorem 4 of \cite{ma2020global}, we shall prove Theorem \ref{main-thm2}.

Denote $S_{\leq0}\equiv\{j\in[p]:\beta^0_j\leq 0\}, S_{\geq 0}\equiv\{j\in[p]:\beta^0_j\geq 0\},   p_1=|S_{\leq 0}|, p_2=|S_{\geq 0}|$ and $\widehat{S}\equiv\{j\in[p]:|T_j|\geq t\}$. For all $t>0$, we define
\begin{align*}
\textup{FDP}_\textup{d}(t) &= \frac{\sum_{j\in S_{\geq 0}} \mathbbm{1}(T_{j} \leq -t)
+\sum_{j\in S_{\leq0}} \mathbbm{1}( T_{j} \geq t)}
{\max \big\{\sum_{j=1}^p \mathbbm{1}( |T_{j}| \geq t),1 \big\}}, \quad\quad \textup{FDR}_\textup{d}(t)=\mE[\textup{FDP}_\textup{d}(t)],\\
\textup{FDV}_\textup{d}(t) &=\mE\left\{ \sum_{j\in S_{\geq0}} \mathbbm{1}( T_{j} \leq -t)
+\sum_{j\in S_{\leq0}} \mathbbm{1}( T_{j} \geq t) \right\},\\
\textup{FWER}_\textup{d}(t) &=\mP\left\{ \left[\sum_{j\in S_{\geq0}} \mathbbm{1}( T_{j} \leq -t)
+\sum_{j\in S_{\leq0}} \mathbbm{1}( T_{j} \geq t)\right]\geq1 \right\}.
\end{align*}

We first consider the situation where $t_0$ in GMT procedure does not exist. For this case, we take $t_0=\sqrt{2\log p}.$ For the numerator of $\textup{FDP}_\textup{d}(t)$, we have
\begin{equation}\label{thm2-1}
\begin{split}
&\mP\left\{\left[\sum_{j\in S_{\geq0}} \mathbbm{1}( T_{j} \leq -\sqrt{2\log p})
+\sum_{j\in S_{\leq0}} \mathbbm{1}( T_{j} \geq \sqrt{2\log p}) \right]\geq 1\right\}\\
&\leq
\mP\left\{\sum_{j\in S_{\leq0}} \mathbbm{1}( T_{j} \geq \sqrt{2\log p}) \geq 1\right\}+
\mP\left\{\sum_{j\in S_{\geq0}} \mathbbm{1}( T_{j} \leq -\sqrt{2\log p})\geq 1\right\}.
\end{split}
\end{equation}
For the first part of \eqref{thm2-1}, we have
\begin{equation*}
\begin{split}
&\mP\left\{\sum_{j\in S_{\leq0}} \mathbbm{1}( T_{j} \geq \sqrt{2\log p}) \geq 1\right\}
 \leq \mP\left\{\sum_{j\in S_{\leq0}} \mathbbm{1}( T'_{j} \geq \sqrt{2\log p}) \geq 1\right\}\nonumber\\
\leq
&\mP\left\{\sum_{j\in S_{\leq0}} \mathbbm{1}\left( \check{T}_{j} \geq \sqrt{2\log p}-
\frac{1}{\sqrt{\log p}}\right) \geq 1\right\}+
\mP\left\{\|T'_{j}-\check{T}_{j}\|_{\infty}\geq\frac{1}{\sqrt{\log p}} \right\}\nonumber\\
\leq &|S_{\leq0}| \cdot \mP\left\{\check{T}_{j} \geq \sqrt{2\log p}- \frac{1}{\sqrt{\log p}} \right\} +o(1).
\end{split}
\end{equation*}
The first inequality is due to the fact that $T_j' \geq T_j$ for $j \in S_{\leq 0}$ and the last inequality comes from Lemma \ref{cailemma7}. Now, for $\check{T}_{j}$, utilizing Lemma 6.1 of \cite{liu2013gaussian} with $\xi_m=\frac{\bTheta_j^T \bx_m\left(y_m-\dot{b} (\bx_m^T\boldsymbol{\beta}^0)\right)}{\sqrt{\Theta_{jj}}}, m\in[n]$, we have
\begin{equation}\label{thm2-2}
\sup_{0\le t\le 2\sqrt{\log p}}\bigg| \frac{P(|\check{T}_j| \ge t)}{G(t)}-1 \bigg|\le C(\log p)^{-1}.
\end{equation}
With the help of Lemma \ref{Gt1} and Lemma \ref{Gt2}, we then obtain
\begin{equation*}
\begin{split}
\mP\left\{\check{T}_{j} \geq \sqrt{2\log p}- \frac{1}{\sqrt{\log p}} \right\}
\leq & G(\sqrt{2\log p})\cdot[1+C(\log p)^{-1}] \\
\leq & \sqrt{\frac{2}{\pi}}\cdot \frac{1}{p\sqrt{2\log p}}\cdot[1+C(\log p)^{-1}].
\end{split}
\end{equation*}
Substitute this into \eqref{thm2-2} and then we get $\mP\left\{\sum_{j\in S_{\leq0}} \mathbbm{1}( T_{j} \geq \sqrt{2\log p}) \geq 1\right\}=O\left(\frac{1}{\sqrt{\log p}}\right).$ Note that this upper bound also holds for the second part of \eqref{thm2-1}. Then, \eqref{thm2-1} gives
\begin{equation*}
\mP\left\{\left[\sum_{j\in S_{\geq0}} \mathbbm{1}( T_{j} \leq -\sqrt{2\log p})+\sum_{j\in S_{\leq0}} \mathbbm{1}( T_{j} \geq \sqrt{2\log p}) \right] = 0\right\} = 1 - O\left(\frac{1}{\sqrt{\log p}}\right),
\end{equation*}
which means that \eqref{fdp} hold.

On the other hand, if $0\leq t_0\leq t_p$ exists, we have
\begin{equation}\label{thm2-4}
\begin{split}
\textup{FDP}_\textup{d}(t_0)= & \frac{\sum_{j\in S_{\geq0}} \mathbbm{1}( T_{j} \leq -t_0)
+\sum_{j\in S_{\leq 0}} \mathbbm{1}( T_{j} \geq t_0)}
{\max \big\{\sum_{j=1}^p \mathbbm{1}( |T_{j}| \geq t_0),1 \big\}} \\
\leq &  \frac{\sum_{j\in S_{\geq0}} \mathbbm{1}( T'_{j} \leq -t_0)
+\sum_{j\in S_{\leq 0}} \mathbbm{1}( T'_{j} \geq t_0)}
{\max \big\{\sum_{j=1}^p \mathbbm{1}( |T_{j}| \geq t_0),1 \big\}} \\
= &  \frac{\sum_{j\in S_{\geq0}}\left[ \mathbbm{1}( T'_{j} \leq -t_0)-Q(t_0)\right]
+\sum_{j\in S_{\leq 0}} \left[\mathbbm{1}( T'_{j} \geq t_0)-Q(t_0)\right]
+(p_1+p_2)Q(t_0)}
{\max \big\{\sum_{j=1}^p \mathbbm{1}( |T_{j}| \geq t_0),1 \big\}}\\
\leq & \frac{pG(t_0)\left(\frac{p_1+p_2}{2p}+A_p \right)}
{\max \big\{\sum_{j=1}^p \mathbbm{1}( |T_{j}| \geq t_0),1 \big\}}
\leq \alpha\left(1-\frac{s_0}{2p}+A_p \right).
\end{split}
\end{equation}
Here $Q(t)=\frac{G(t)}{2}$ and $t_p=\sqrt{2\log p-2\log\log p}$, and
\begin{equation}\label{thm2-5}
A_{p} = \sup _{0 \leq t \leq t_{p}} \left| \frac{\sum_{j \in S_{\geq 0}}\left[\mathbbm{1}\left(T'_{j} \leq-t\right)-Q(t)\right]+\sum_{j \in S_{\leq 0}}\left[\mathbbm{1}\left(T'_{j} \geq t\right)-Q(t)\right]}{p G(t)}\right|.
\end{equation}
If $A_p \stackrel{p}{\rightarrow} 0$, \eqref{fdp} holds, then the proof is completed. Proposition \ref{GGt} below ensures $A_p \stackrel{p}{\rightarrow} 0$.
\end{proof}
\begin{proposition} \label{GGt}
Under conditions in Theorem \rm{\ref{main-thm2}}, for $A_p$ defined in \eqref{thm2-5}, when $(n,p)\rightarrow \infty$ , we have
$A_p  \stackrel { p }\rightarrow 0.$
\end{proposition}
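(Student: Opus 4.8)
The plan is to adapt the moment‑control argument of \cite{liu2013gaussian} and of Theorem~3.1 in \cite{javanmard2019false} to the correlated debiased statistics $\check{T}_j$ of \eqref{standard2}. First I would pass from $T'_j$ to $\check{T}_j$: by Lemma~\ref{cailemma7}, $\max_{j\in[p]}|T'_j-\check{T}_j|=o_p(1/\sqrt{\log p})$, so on an event of probability $\to 1$ the events $\{T'_j\le-t\}$ and $\{T'_j\ge t\}$ are sandwiched between the corresponding $\check{T}_j$‑events with the threshold shifted by $\varepsilon_p=o(1/\sqrt{\log p})$; Lemma~\ref{Gt2} gives $G(t\pm\varepsilon_p)/G(t)=1+o(1)$ uniformly on $[0,t_p]$, and the moderate‑deviation estimate \eqref{thm2-2} gives $\mP(\check{T}_j\le-t)=\mP(\check{T}_j\ge t)=Q(t)(1+O((\log p)^{-1}))$ uniformly on $[0,t_p]$. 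Hence it suffices to prove $\sup_{0\le t\le t_p}|V_p(t)|/(pG(t))\stackrel{p}{\rightarrow}0$ for the exactly centered sum $V_p(t):=\sum_{j\in S_{\ge0}}[\mathbbm{1}(\check{T}_j\le-t)-\mP(\check{T}_j\le-t)]+\sum_{j\in S_{\le0}}[\mathbbm{1}(\check{T}_j\ge t)-\mP(\check{T}_j\ge t)]$; note $pG(t)\ge pG(t_p)\asymp\sqrt{\log p}\to\infty$ by Lemma~\ref{Gt1}.

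Next I would discretize $t$ over a grid $0=a_0<\cdots<a_{b_p}=t_p$ with spacing $o(1/\sqrt{\log p})$, so $b_p$ is polynomial in $p$; monotonicity of the indicators in $t$ and the uniform regularity of $G$ (Lemmas~\ref{Gt1}--\ref{Gt2}) reduce the supremum to $\max_{0\le\ell\le b_p}|V_p(a_\ell)|/(pG(a_\ell))$ up to a $(1+o(1))$ factor and an $o(1)$ remainder. For a fixed $t=a_\ell$, Chebyshev's inequality with a union bound over the grid reduces matters to showing $\operatorname{Var}(V_p(t))=o((pG(t))^2)$ uniformly in $t\le t_p$. Writing $\operatorname{Var}(V_p(t))=\sum_j\operatorname{Var}(\cdot)+\sum_{j\ne k}\operatorname{Cov}(\cdot)$, the diagonal contributes $O(pG(t))=o((pG(t))^2)$, and for the off‑diagonal I would use that $(\check{T}_j,\check{T}_k)$ is approximately centered bivariate normal with correlation $\Theta_{jk}^0$ (the bivariate analogue of Proposition~\ref{prop1}, with a Berry--Esseen rate for the bilinear forms $\frac{1}{\sqrt{n\Theta_{jj}}}\sum_m\bTheta_j^T\bx_m(y_m-\dot{b}(\bx_m^T\bbeta^0))$), splitting the pairs $(j,k)$ according to the size of $|\Theta_{jk}^0|$.

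There are three regimes. (i) \emph{Weak pairs}, $|\Theta_{jk}^0|<(\log p)^{-2-\gamma}$: the bivariate estimate gives $|\operatorname{Cov}(\cdot)|\lesssim|\Theta_{jk}^0|\,e^{-t^2}\lesssim(\log p)^{-1-\gamma}G(t)^2$ uniformly on $[0,t_p]$ (using $e^{-t^2}\lesssim(\log p)\,G(t)^2$ there), so summing over all $\le p^2$ pairs gives $o((pG(t))^2)$ since $\gamma>1/2$. (ii) \emph{Moderate pairs}, $(\log p)^{-2-\gamma}\le|\Theta_{jk}^0|\le\frac{1-\rho}{1+\rho}$: by (C7) there are $|\mathcal{A}(\gamma)|=o(p^{1+\rho})$ of them, and the classical tail bound for a bivariate normal whose correlation has modulus at most $\frac{1-\rho}{1+\rho}$ gives $|\operatorname{Cov}(\cdot)|\lesssim t^{\rho}(G(t))^{1+\rho}$, so the total is $o\big(p^{1+\rho}(\log p)^{\rho/2}(G(t))^{1+\rho}\big)$, which is $o((pG(t))^2)$ uniformly on $[0,t_p]$ because $\rho<1/3$ (in the worst case $pG(t)\asymp\sqrt{\log p}$ a vanishing factor $(\log p)^{(2\rho-1)/2}$ remains). (iii) \emph{Strong pairs}, $|\Theta_{jk}^0|>\frac{1-\rho}{1+\rho}$: by (C7) there are $|\mathcal{B}(\rho)|=O(p)$ of them, each contributing $\lesssim G(t)$ by the elementary bound $|\operatorname{Cov}(\mathbbm{1}_A,\mathbbm{1}_B)|\le\min(\mP(A),\mP(B))\lesssim G(t)$ on the relevant tail events $A,B$, so the total is $O(pG(t))=o((pG(t))^2)$. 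Adding the three, $\operatorname{Var}(V_p(t))=o((pG(t))^2)$ uniformly; Chebyshev plus the union bound over the polynomially many grid points (the error terms are negative powers of $p$, hence summable after a harmless refinement) gives $\max_\ell|V_p(a_\ell)|/(pG(a_\ell))\stackrel{p}{\rightarrow}0$, the discretization step upgrades this to the supremum, and tracing the $o(1)$'s back into $A_p$ yields $A_p\stackrel{p}{\rightarrow}0$.

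The main obstacle is regime (ii): one needs the bivariate tail bound $\mP(\check{T}_j\le-t,\check{T}_k\le-t)\lesssim t^{\rho}(G(t))^{1+\rho}$ (and its sign variants for opposite tails and negative correlations) to hold \emph{uniformly} in $t\in[0,t_p]$ even though each $\check{T}_j$ is only approximately Gaussian, which requires a Berry--Esseen / moderate‑deviation bound for the bilinear forms that is uniform over the directions $\bTheta_j$; and then the counting bound $|\mathcal{A}(\gamma)|=o(p^{1+\rho})$ must be balanced against the decay $(G(t))^{1+\rho}$ so that the worst case $t\approx t_p$ still beats $(pG(t))^2$ --- this is exactly where the constraints $\rho\in(0,1/3)$ and the threshold $(\log p)^{-2-\gamma}$ built into (C7), together with (C8) keeping $\Theta_{jj}$ bounded, are used.
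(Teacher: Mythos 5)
Your overall architecture matches the paper's proof closely: reduce $T'_j$ to $\check T_j$ via Lemma \ref{cailemma7} and Lemma \ref{Gt2}, center with the moderate-deviation estimate \eqref{thm2-2}, apply Chebyshev to the normalized sum at grid points, and split the covariance sum into weak/moderate/strong pairs exactly along $\mathcal{A}(\gamma)$ and $\mathcal{B}(\rho)$ using (C7)--(C8) together with the bivariate tail bounds (the paper cites Lemmas 6.1--6.2 of Liu (2013) where you invoke a Berry--Esseen argument for the bilinear forms; these play the same role). However, there is a genuine gap in your discretization and union-bound step. Chebyshev with $\operatorname{Var}(V_p(t))=o((pG(t))^2)$ only yields a \emph{per-point} failure probability that is $o(1)$; concretely, the diagonal term alone forces this probability to be at least of order $1/(pG(t))$, which at $t\approx t_p$ is $\asymp(\log p)^{-1/2}$, and the off-diagonal regimes contribute only negative powers of $\log p$ as well (e.g.\ $(\log p)^{-1-\gamma}$ and $(\log p)^{(2\rho-1)/2}$ by your own estimates). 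Your claim that ``the error terms are negative powers of $p$, hence summable after a harmless refinement'' is therefore false, and a union bound over a grid that is polynomially fine in $p$ (or even over the $\gtrsim\log p$ points your spacing $o(1/\sqrt{\log p})$ produces) diverges.

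The paper's proof repairs exactly this point: it discretizes geometrically in the $G$-scale, taking $z_i=G(t_p)\,q^i$ with only $b_p\asymp(\log p)^{1+\delta}$ points for some $0<\delta<1/3$, so that $G(\tau_i)/G(\tau_{i+1})=q\to1$ uniformly (which, by monotonicity of the indicators, controls the supremum between grid points), and then bounds $\sum_{i=0}^{b_p}\mE[I(\tau_i)^2]$ directly. The geometric structure makes the dominant terms sum like geometric series, e.g.\ $\sum_i (pz_i)^{-1}=O((\log p)^{-1/2+\delta})$ and $o(p^{-1+\rho})\sum_i z_i^{-1+\rho}=o((\log p)^{-(1-\rho)/2+\delta})$, and the weak-pair term contributes $b_p\cdot(\log p)^{-3/2}=O((\log p)^{-1/2+\delta})$ --- this is where $\gamma>1/2$, $\rho<1/3$ and $\delta<1/3$ are actually consumed. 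So your covariance estimates are essentially right, but without replacing your grid and union bound by this coarser geometric grid plus a direct summation of second moments, the argument as written does not close.
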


\begin{proof}
First, we have
\begin{equation*}
A_{p} \leq \sup _{0 \leq t \leq t_{p}} \left| \frac{\sum_{j \in S_{\geq 0}}\left[\mathbbm{1}\left(T'_{j} \leq-t\right)-Q(t)\right]}{p G(t)}\right|+
\sup _{0 \leq t \leq t_{p}} \left| \frac{\sum_{j \in S _{\leq 0}}\left[\mathbbm{1}\left(T'_{j} \geq t\right)-Q(t)\right]}{p G(t)}\right|.
\end{equation*}
Without loss of generality, it suffices to show that, for any $\varepsilon >0$,
\begin{equation*}
\mP\left(\sup _{0 \leq t \leq t_{p}} \left| \frac{\sum_{j \in S_{\leq 0}}\left[\mathbbm{1}\left(T'_{j} \geq t\right)-Q(t)\right]}{p G(t)}\right| \leq \varepsilon\right) \rightarrow 1.
\end{equation*}
Then, according to Lemma \ref{Gt2} and Lemma \ref{cailemma7}, we only need to prove
\begin{equation}\label{thm2-7}
\mP\left(\sup _{0 \leq t \leq t_{p}} \left| \frac{\sum_{j \in S_{\leq 0}}\left[\mathbbm{1}\left(\check{T}_{j} \geq t\right)-Q(t)\right]}{p G(t)}\right| \leq \varepsilon\right) \rightarrow 1.
\end{equation}
Let $z_0<z_1<\ldots<z_{b_p}\leq 1,z_0=G(t_p)$ and $\tau_i =G^{-1}(z_i)(0\leq i\leq b_p).$ Note that $G(t_p) \asymp \frac{\sqrt{\log p}}{p}$ by Lemma \ref{Gt1}. Take $b_p$ as the largest integer less than or equal to $(\log p)^{1+\delta}$, for some $0<\delta<1/3.$ Then, we have $q = \left[G(t_p)\right]^{-\frac{1}{b_p}}$ satisfies
\begin{equation*}
\begin{split}
(q-1) &\asymp \log q=-\frac{\log(\sqrt{\log p}/p) }{b_p}=O((\log p)^{-\delta}),\\
(q^{1-\rho}-1) &\asymp (1-\rho)\log q=O((\log p)^{-\delta}).
\end{split}
\end{equation*}
This means when $p \rightarrow \infty$, $q \rightarrow 1$.
Next, we take $z_i=z_0\cdot q^i(0\leq i\leq b_p)$ and obtain
\begin{equation*}
\frac{G(\tau_i)}{G(\tau_{i+1})}=\frac{z_i}{z_{i+1}}=q \to 1 ~\text{uniformly}.
\end{equation*}
Therefore, proving \eqref{thm2-7} turns into proving
\begin{equation*}
\mP\left(\max _{0\leq i\leq b_p} \left| \frac{\sum_{j \in S_{\leq 0}}\left[\mathbbm{1}\left(\check{T}_{j} \geq \tau_i\right)-Q(\tau_i)\right]}{p G(\tau_i)}\right| \geq \varepsilon\right) \rightarrow 0.
\end{equation*}
Actually, according to \eqref{thm2-2}, we now only need to show
\begin{equation*}
\mP\left(\max _{0\leq i\leq b_p} \left| \frac{\sum_{j \in S_{\leq 0}}\left[\mathbbm{1}\left(\check{T}_{j} \geq \tau_i\right)-\mP\left(\check{T}_{j} \geq \tau_i\right)\right]}{p G(\tau_i)}\right| \geq \varepsilon\right) \rightarrow 0,
\end{equation*}
which satisfies
\begin{equation*}
\begin{split}
&\mP\left(\max _{0\leq i\leq b_p} \left| \frac{\sum_{j \in {S \leq 0}}\left[\mathbbm{1}\left(\check{T}_{j} \geq \tau_i\right)-\mP\left(\check{T}_{j} \geq \tau_i\right)\right]}{p G(\tau_i)}\right| \geq \varepsilon\right) \\
\leq
&\sum_{i=0}^{b_p}\mP\left( \left| \frac{\sum_{j \in S _{\leq 0}}\left[\mathbbm{1}\left(\check{T}_{j} \geq \tau_i\right)-\mP\left(\check{T}_{j} \geq \tau_i\right)\right]}{p G(\tau_i)}\right| \geq \varepsilon\right).
\end{split}
\end{equation*}
Define $I(t)=\frac{\sum_{j \in S _{\leq 0}}\left[\mathbbm{1}\left(\check{T}_{j} \geq t\right)-\mP\left(\check{T}_{j} \geq t\right)\right]}{p G(t)}.$ By Markov's inequality, we know that $P(|I(t)\geq \varepsilon|)\leq\frac{\mE[I(t)^2]}{\varepsilon^2}$. Thence, the problem turns into showing
\begin{equation*}
\sum_{i=0}^{b_p}\mE[I(\tau_i)^2]=o(1).
\end{equation*}

In fact, utilizing the definition of $\mathcal{A}(\gamma)$, we have followimng decomposition
\begin{equation}\label{thm2-12}
\begin{split}
\mE[I(t)^2]=&\frac{\sum_{j \in S _{\leq 0}}\left[\mP\left(\check{T}_{j} \geq t\right)-P^2\left(\check{T}_{j} \geq t\right)\right]}{p^2 G^2(t)} \\
&+ \frac{\sum_{j,k \in S _{\leq 0},k\neq j}\left[\mP\left(\check{T}_{j} \geq t,\check{T}_{k} \geq t\right)-\mP\left(\check{T}_{j} \geq t\right)\mP\left(\check{T}_{k} \geq t\right)\right]}{p^2 G^2(t)}\\
\leq & \frac{C}{pG(t)}+
\frac{1}{p^2}\sum_{(j,k)\in\mathcal{A}^c(\gamma)\cap S _{\leq 0}}
\left[\frac{\mP\left(\check{T}_{j} \geq t,\check{T}_{k} \geq t\right)}{G^2(t)}-1\right]+\frac{1}{p^2}\sum_{(j,k)\in\mathcal{A}(\gamma)\cap S _{\leq 0}}
\frac{\mP\left(\check{T}_{j} \geq t,\check{T}_{k} \geq t\right)}{G^2(t)} \\
=&\frac{C}{pG(t)}+\Delta_{11}(t)+\Delta_{12}(t).
\end{split}
\end{equation}
Similar to the derivation of \eqref{thm2-2}, for $(j,k)\in\mathcal{A}^c(\gamma)$, from Lemma 6.1 in \cite{liu2013gaussian}, we know that for any $0\leq t \leq \sqrt{2\log p}$, uniformly
\begin{equation*}
\Delta_{11}(t)\leq C(\log p)^{- \frac{3}{2}}.
\end{equation*}
For $(j,k) \in \mathcal{A}(\gamma)$, by Lemma 6.2 in \cite{liu2013gaussian},
\begin{equation*}
\mP\left(|\check{T}_{j}| \geq t,|\check{T}_{k}| \geq t\right) \leq C(t+1)^{-2}\exp
\left(-\frac{t^2}{1+|\Theta^0_{jk}|} \right).
\end{equation*}
By Lemma \ref{Gt1}, we know that
$(t+1)^{-2}\exp \left(-\frac{t^2}{1+|\Theta^0_{jk}|} \right) \leq C \left[ G(t)\right]^{\frac{2}{1+|\Theta^0_{jk}|}}$, so
\begin{equation*}
\begin{split}
\Delta_{12}(t)\leq  &\frac{C}{p^2G^2(t)}\sum_{(j,k)\in\mathcal{A}(\gamma)\cap S _{\leq 0}}
(t+1)^{-2}\exp\left(-\frac{t^2}{1+|\Theta^0_{jk}|} \right) \\
\leq &\frac{C}{p^2} \sum_{(j,k)\in\mathcal{A}(\gamma)\cap S _{\leq 0}}\left[
G(t) \right]^{-\frac{2|\Theta^0_{jk}|}{1+|\Theta^0_{jk}|}}.
\end{split}
\end{equation*}
We divide $\mathcal{A}(\gamma)$ into $\mathcal{A}(\gamma)\cap\mathcal{B}(\rho)$ and $\mathcal{A}(\gamma)\cap\mathcal{B}^c(\rho).$ Therewith we have
\begin{equation*}
\begin{split}
\frac{1}{p^2} \sum_{(j,k)\in\mathcal{A}(\gamma)\cap\mathcal{B}(\rho)\cap S _{\leq 0}}\left[
G(t) \right]^{-\frac{2|\Theta^0_{jk}|}{1+|\Theta^0_{jk}|}} = &
O\left(\frac{1}{p G(t)} \right),\\
\frac{1}{p^2} \sum_{(j,k)\in\mathcal{A}(\gamma)\cap\mathcal{B}^c(\rho)\cap S _{\leq 0}}\left[G(t) \right]^{-\frac{2|\Theta^0_{jk}|}{1+|\Theta^0_{jk}|}} = &
o(p^{-1+\rho})\cdot[G(t)]^{-1+\rho}.
\end{split}
\end{equation*}
Subsitute all these pieces into \eqref{thm2-12}, and then we know that
\begin{equation*}
\begin{split}
\sum_{i=0}^{b_p}\mE[I(\tau_i)^2] \leq &C\cdot \sum_{i=0}^{b_p}
\left[ \frac{1}{pG(\tau_i)}+(\log p)^{- \frac{3}{2}}+\frac{1}{pG(\tau_i)}+o(p^{-1+\rho})\cdot[G(\tau_i)]^{-1+\rho}
\right] \\
\leq &C\cdot\sum_{i=0}^{b_p}\left[ \frac{2}{pz_i}+o(p^{-1+\rho})\cdot z_i^{-1+\rho}\right] +O((\log p)^{-1/2+\delta})\\
=&O\left( \frac{1}{p} \cdot\frac{q^{b_p}-1}{q-1}\right)+o(p^{-1+\rho})\cdot  \frac{q^{(1-\rho)b_p}-1}{q^{1-\rho}-1} +O((\log p)^{-1/2+\delta}) \\
=&O\left((\log p)^{-1/2+\delta}  \right)+o(p^{-1+\rho})\cdot \left(\frac{p}{\sqrt{\log p}} \right)^{1-\rho}\cdot (\log p)^{\delta}+O((\log p)^{-1/2+\delta})\\
=&O\left((\log p)^{-1/2+\delta})+o((\log p)^{-1/2+\delta+\rho/2}\right)=o(1),
\end{split}
\end{equation*}
where the last equality is because of $0<\rho<1/3$ and $0<\delta<1/3.$ Now we have completed the proof of Proposition \ref{GGt}.
\end{proof}

\subsection{Proof of Theorem \ref{main-thm3}}
\begin{proof}
Similar to \eqref{thm2-2}, we have
$\sup_{0\le t\le 2\sqrt{\log p}}\bigg| \frac{ \mP(\check{T}_j \ge t)}{Q(t)}-1 \bigg|\le C(\log p)^{-1}$.
Then, Lemma \ref{cailemma7} gives
\begin{equation*}
\sup_{0\le t\le 2\sqrt{\log p}}\bigg| \frac{ \mP(T'_j \ge t)}{Q(t)}-1 \bigg|\le C(\log p)^{-1}.
\end{equation*}
According to definition of $T'_j$ in \eqref{standard2}, we know that $T'_j \geq T_j$, for $j \in S_{\leq 0}$. Therefore,
\begin{equation*}
\frac{\sum_{j\in S_{\leq 0}} \mP(T_j \ge t_{\text{FDV}})}{p_1 Q(t_{\text{FDV}})}-1 \le
\left|\frac{\sum_{j\in S_{\leq 0}} \mP(T'_j \ge t_{\text{FDV}})}{p_1Q(t_{\text{FDV}})}-1 \right|\le  C(\log p)^{-1}.
\end{equation*}
Likewise, we also have
\begin{equation*}
\frac{\sum_{j\in S_{\geq 0}} \mP(T_j \le -t_{\text{FDV}})}{p_2Q(t_{\text{FDV}})}-1 \le C(\log p)^{-1}.
\end{equation*}

Recall that
\begin{equation*}
\textup{FDV}_\textup{d}(t_{\text{FDV}})=\sum_{j\in S_{\leq 0}} \mP(T_j \ge t_{\text{FDV}})+
\sum_{j\in S_{\geq 0}} \mP(T_j \le -t_{\text{FDV}}),
\end{equation*}
$p_1+p_2=2p-s_0, G(t_{\text{FDV}})=u/p,$ and $Q(t)=G(t)/2$. Then we have \eqref{fdv} holds. For \eqref{fwer},
\begin{equation}\label{thm3-6}
\begin{split}
\textup{FWER}_\textup{d}(t_{\text{FDV}})\le & \mP\left(\bigcup_{j\in S_{\geq 0}}\{T_j\leq -t_{\text{FDV}}\} \right)+
\mP\left(\bigcup_{j\in S_{\leq 0}}\{T_j\leq t_{\text{FDV}}\} \right) \\
 \le&\sum_{j\in S_{\geq 0}} \mP(T_j \le -t_{\text{FDV}})+\sum_{j\in S_{\leq 0}} \mP(T_j \ge t_{\text{FDV}}),
\end{split}
\end{equation}
which is the same as \eqref{thm3-6}. Likewise, we know that \eqref{fwer} holds.
\end{proof}

\section{Proof of Theorem \ref{power-thm}}
In this section, we shall prove the consistency of directional power. Unlike the proof of FDP related theorems, the denominator of \eqref{t-fdr} in the difinition of the data-dependent threshold $t_0$ is different from that of $\textup{Power}_\textup{d}$ in Definition \ref{fdrdir}, while it is the same as the denominator of $\textup{FDP}_\textup{d}$. Thus, to derive the property of $\textup{Power}_\textup{d}$, it is convenient to construct a new threshold $\tilde{t}$ independent of data. Note that the greater the threshold, the smaller the statistical power. Thus, the new threshold should be slightly bigger than $t_0$. The following Lemma gives the form of $\tilde{t}$ and shows that $t_0 \leq \tilde{t}$ holds with high probability.

\begin{lemma}\label{power-lemma}
Under conditions of Theorem $\rm{\ref{power-thm}}$, for any data-independent threshold
\begin{equation*}
\tilde{t}=G^{-1}\left( \frac{\alpha s_0}{p}(1-o(1)) \right),
\end{equation*}
it satisfies $t_0 \leq \tilde{t}$ with high probability.
\end{lemma}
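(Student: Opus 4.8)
The plan is to prove $t_0\le\tilde t$ by exhibiting, with probability tending to $1$, a point of the feasible set of the infimum in \eqref{t-fdr} that is no larger than $\tilde t$. That set is $\{0\le t\le t_p:\ pG(t)\le\alpha(\sum_{j=1}^p\mathbbm 1\{|T_j|\ge t\}\vee1)\}$ and $t_0$ is its infimum, so it suffices to verify the defining inequality at $t=\tilde t$ (when $\tilde t\le t_p$), or at $t=t_p$ (when $\tilde t>t_p$). By the definition of $\tilde t$ one has $pG(\tilde t)=\alpha s_0(1-o(1))$, so the inequality at $\tilde t$ reduces to
\[
\mP\Bigl(\ \sum_{j=1}^p\mathbbm 1\{|T_j|\ge\tilde t\}\ \ge\ s_0\ \Bigr)\ \longrightarrow\ 1,
\]
i.e.\ that all $s_0$ true signals are declared; this is the core of the argument.

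First I would locate $\tilde t$: Lemma~\ref{Gt1} gives $-\log G(t)=t^2/2+\log t+O(1)$, so inverting $G(\tilde t)=\tfrac{\alpha s_0}{p}(1-o(1))$ yields $\tilde t=\sqrt{2\log(p/s_0)}\,(1+o(1))$, where I use $\log(p/s_0)\asymp\log p$, a consequence of $s_0=o(n^{1/4})$ and $n\ll p$ in {(C5)}. For $j\in S$ write $T_j=T'_j+\sqrt n\,\beta^0_j/\sqrt{\widehat\Theta_{jj}}$ with $T'_j$ as in \eqref{standard2}. The signal hypothesis $|\beta^0_j|>\sqrt{8\Theta_{jj}\log(p/s_0)/n}$ gives $\sqrt n|\beta^0_j|/\sqrt{\Theta_{jj}}>2\sqrt{2\log(p/s_0)}$, and, together with $\max_j|\widehat\Theta_{jj}/\Theta_{jj}-1|=o_p(1)$ from \eqref{thetaomega}, this is $>2\sqrt{2\log(p/s_0)}(1-o_p(1))$ uniformly over $j\in S$. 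Hence, on a high-probability event, every $j\in S$ satisfies
\[
|T_j|\ \ge\ \frac{\sqrt n\,|\beta^0_j|}{\sqrt{\widehat\Theta_{jj}}}-|T'_j|\ \ge\ \tilde t
\qquad\text{unless}\qquad |T'_j|>t^{*}:=\sqrt{2\log(p/s_0)}\,(1-\varepsilon_p),
\]
for a suitable deterministic $\varepsilon_p\to0$ absorbing the relevant $o(1)$ error terms; consequently $\sum_{j\in S}\mathbbm 1\{|T_j|<\tilde t\}\le\sum_{j\in S}\mathbbm 1\{|T'_j|>t^{*}\}$, and it remains to show the right-hand side is $0$ with probability $\to1$. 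For that I would use Lemma~\ref{cailemma7} ($\max_j|T'_j-\check T_j|=o_p(1/\sqrt{\log p})$) to pass to $\check T_j$ on the event $\{\max_j|T'_j-\check T_j|\le1\}$, so that there $\sum_{j\in S}\mathbbm 1\{|T'_j|>t^{*}\}\le\sum_{j\in S}\mathbbm 1\{|\check T_j|>t^{*}-1\}$; since $t^{*}-1\asymp\sqrt{\log p}\le2\sqrt{\log p}$, the tail estimate \eqref{thm2-2} gives $\mP(|\check T_j|>t^{*}-1)\le G(t^{*}-1)(1+o(1))$, whence
\[
\mE\Bigl[\ \sum_{j\in S}\mathbbm 1\{|\check T_j|>t^{*}-1\}\ \Bigr]\ \le\ s_0\,G(t^{*}-1)(1+o(1)),
\]
and Lemma~\ref{Gt1} bounds $G(t^{*}-1)\le(s_0/p)^{1-o(1)}$, so $s_0G(t^{*}-1)\le s_0^{2-o(1)}p^{-1+o(1)}=o(1)$ because $s_0^2=o(\sqrt p)$ under {(C5)}. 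Markov's inequality then forces $\sum_{j\in S}\mathbbm 1\{|\check T_j|>t^{*}-1\}=0$ with high probability; tracing back, all of $S$ is declared at threshold $\tilde t$, so $\sum_{j=1}^p\mathbbm 1\{|T_j|\ge\tilde t\}\ge s_0$, hence $pG(\tilde t)=\alpha s_0(1-o(1))\le\alpha s_0\le\alpha(\sum_j\mathbbm 1\{|T_j|\ge\tilde t\}\vee1)$ and $t_0\le\tilde t$ whenever $\tilde t\le t_p$.

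The step I expect to be the main obstacle is the remaining case $\tilde t>t_p$, which can occur when $s_0$ is of order $\sqrt{\log p}$ or smaller: then $\tilde t$ lies outside the admissible window $[0,t_p]$, and one must instead show that the feasible set of \eqref{t-fdr} is non-empty inside $[0,t_p]$, so that $t_0\le t_p<\tilde t$. The $s_0$ signals alone do not suffice at $t_p$, since $pG(t_p)\asymp\sqrt{\log p}$ may exceed $\alpha s_0$; one also needs a \emph{lower} bound on the number of noise rejections $\sum_{j\notin S}\mathbbm 1\{|T_j|\ge t\}$ for $t$ slightly below $t_p$. I would obtain $\sum_{j\notin S}\mathbbm 1\{|T_j|\ge t\}=(p-s_0)G(t)(1+o_p(1))$ uniformly on $[0,t_p]$ by a second-moment (Chebyshev) estimate, using the covariance identities \eqref{checkTmoment} and the weak-dependence constraints {(C7)--(C8)} — this is precisely the machinery already developed in Proposition~\ref{GGt} — and then a one-line computation locates a feasible $t\le t_p$ strictly below $\tilde t$. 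Establishing that uniform concentration of the false-discovery count is where the real work lies; everything else is a routine assembly of the tail bounds proved earlier.
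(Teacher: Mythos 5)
Your argument for the main case is essentially the paper's: both reduce the claim to showing that, with probability tending to one, (nearly) all $s_0$ signal coordinates satisfy $|T_j|\ge\tilde t$ — via the beta-min condition, the concentration of $\widehat{\Theta}_{jj}$, Lemma \ref{cailemma7}, the Gaussian tail bound \eqref{thm2-2}, and Markov's inequality — so that the feasibility inequality in \eqref{t-fdr} holds at $\tilde t$; the paper merely phrases this as a contradiction (assuming $t_0>\tilde t$ and showing the count $\sum_j\mathbbm{1}\{|T_j|\ge\tilde t\}$ is simultaneously $\ge s_0(1-a_pb_p)$ and $<s_0(1-2a_pb_p)$), while you verify feasibility directly with the slightly stronger claim that \emph{all} signals are detected, which your computation $s_0^2=o(\sqrt p)$ under (C5) does support. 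The one point where you go beyond the paper is the corner case $\tilde t>t_p$ (possible when $s_0\lesssim\sqrt{\log p}$): the paper's proof silently assumes $\tilde t$ lies in the admissible window $[0,t_p]$, whereas you correctly flag that one must separately show the feasible set is nonempty inside $[0,t_p]$. Your proposed fix there is only a sketch, and in the extreme regime $s_0\ll\sqrt{\log p}$ the null count $(p-s_0)G(t)\asymp pG(t)$ dominates $s_0$, so $pG(t)/\#\{|T_j|\ge t\}\to 1>\alpha$ and no feasible $t\le t_p$ need exist — a difficulty shared by (and unresolved in) the paper's own proof, so it should not be held against your proposal for the regime the paper actually treats.
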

We defer the proof of it to Section \ref{subsubsec:proofoftildetlemma}.

\begin{proof}
Let $S_+\equiv \{j\in[p]:\beta_j^0>0\}$ and $S_-\equiv \{j\in[p]:\beta_j^0<0\}$. Then, $S=S_+\cup S_-.$ For $j\in S$, we know $T_j=T'_j + \frac{\sqrt{n} \beta^0_j}{\sqrt{\widehat{\Theta}_{jj}}}$ by \eqref{standard2}.
For $\varepsilon>0$ and $\delta>0$, we define event
\begin{align} \label{power-e4}
\mathcal{D}(\varepsilon,\delta)\equiv\left\{\max_{j\in[p]}|T'_j-\check{T}_j| \leq\delta, \max_{j\in[p]} |\widehat{\Theta}_{jj}-\Theta_{jj}| \le \varepsilon\right\}.
\end{align}
Without ambiguity, we use $\mathcal{D}$ to denote $\mathcal{D}(\varepsilon,\delta)$ in the following part. When $t_0 \le \tilde{t}$, we have
\begin{equation*}
\begin{split}
&~~~~\rm{Power}_\textup{d}=\mE\left[\frac{|\{j\in \widehat{S}:\, \widehat{\sign}_j = \sign(\beta^0_j)\}|}{|S|\vee 1}\right] \\
\ge &\frac{1}{s_0}\sum_{j\in S_+} \mP\left(\{T_i\ge \tilde{t}\}\cap\mathcal{D}\right)+ \frac{1}{s_0}\sum_{i\in S_-} \mP\left(\{T_i \le -\tilde{t}\}\cap\mathcal{D}\right)\\
\ge&\frac{1}{s_0}\sum_{j\in S_+} \mP\left(\left\{\check{T}_i\ge \tilde{t}+\delta-\frac{\sqrt{n} \beta^0_j}{\sqrt{\Theta_{jj}+\varepsilon}}\right\}\cap\mathcal{D}\right) + \frac{1}{s_0}\sum_{j\in S_-} \mP\left(\left\{\check{T}_i \le -\tilde{t}-\delta-\frac{\sqrt{n} \beta^0_j}{\sqrt{\Theta_{jj}+\varepsilon}}\right\}\cap\mathcal{D}\right)\\
\geq &\frac{1}{s_0}\sum_{j\in S_+} \left\{\mP\left(\check{T}_i\ge \tilde{t}+\delta-\frac{\sqrt{n} \beta^0_j}{\sqrt{\Theta_{jj}+\varepsilon}}\right)\right\}+ \frac{1}{s_0}\sum_{j\in S_-} \left\{\mP\left(\check{T}_i \le -\tilde{t}-\delta-\frac{\sqrt{n} \beta^0_j}{\sqrt{\Theta_{jj}+\varepsilon}}\right) \right\} - \mP(\mathcal{D}^c).
\end{split}
\end{equation*}
From \eqref{thetaomega} with $\frac{1}{n^{1 / 4}} = o(\frac{1}{\sqrt{\log p}})$ and Lemma \ref{cailemma7},
\begin{equation}\label{power-1}
\lim_{(n,p)\rightarrow \infty} \mP \bigl(\mathcal{D} \bigl(\frac{1}{\sqrt{\log p}},\frac{1}{\sqrt{\log p}}\bigr)\bigr)=1.	
\end{equation}
Thus, for any $\varepsilon > 0$ and $\delta > 0$, $\mP(\mathcal{D})\rightarrow 1$ when $n, p \to \infty$.
Lemma 6.1 of \cite{liu2013gaussian} and the definition of $\tilde{t}$ gives
\begin{equation}\label{power-2}
\begin{split}
\liminf_{(n,p)\rightarrow\infty} \frac{\frac{1}{s_0} \left\{\sum_{j\in S_+} \mP\left(\check{T}_i\ge \tilde{t}-\frac{\sqrt{n} \beta^0_j}{\sqrt{\Theta_{jj}}}\right) + \sum_{j\in S_-} \mP \left(\check{T}_i \le -\tilde{t}-\frac{\sqrt{n} \beta^0_j}{\sqrt{\Theta_{jj}}}\right)\right\}}{\frac{1}{s_0}\sum_{j\in S} Q\left[G^{-1}\left(\frac{\alpha s_0}{p}\right)-\frac{\sqrt{n}|\beta_j^0|}{\sqrt{\Theta_{jj}}}\right]} \geq 1.
\end{split}
\end{equation}

Also, Lemma \ref{Gt1} tells us
\begin{equation*}
G\left(\sqrt{2\log\left(2p/(\alpha s_0)\right)}\right)
< \frac{2\phi\left(\sqrt{2\log\left(2p/(\alpha s_0)\right)} \right)}
{\sqrt{2\log\left(2p/(\alpha s_0)\right)}}
< \frac{\alpha s_0}{p}.
\end{equation*}
Then we know that
\begin{equation}\label{power-3}
G^{-1} \left(\frac{\alpha s_0}{p} \right)<\sqrt{2\log\left(2p/(\alpha s_0)\right)}	
\end{equation}
due to the fact that $G(t)$ is a monotonically decreasing function. With the assumption $|\beta_j^0|>\sqrt{\frac{8\Theta_{jj}\log(p/s_0)}{n}}$, we know that the denominator of \eqref{power-2} is bigger than some constant $C > 0$. Thus, we have
\begin{equation*}
\frac{\mP(\mathcal{D}^c)}{\frac{1}{s_0}\sum_{j\in S} Q\left[G^{-1}\left(\frac{\alpha s_0}{p}\right)-\frac{\sqrt{n}|\beta_j^0|}{\sqrt{\Theta_{jj}}}\right]} \rightarrow 0,
\end{equation*}
for any $\varepsilon, \delta > 0$. Combining this with \eqref{power-2} and taking sufficiently small $\varepsilon$ and $\delta$, by then we have completed the proof.

\subsection{Proof of Corollary \ref{power-cor2}}
Let $\beta_{\min}=\min_{j\in S}|\beta_j^0|$. Because $Q(t)$ is a strictly monotonically decreasing function of $t$,  it can be derived from the theorem \ref{power-thm} that
\begin{equation*}
\liminf_{n\rightarrow \infty} \frac{\rm{Power}_\textup{d}}{
Q\left[G^{-1}(\frac{\alpha
s_0}{p})-\frac{\sqrt{n}\beta_{\min}}{\sqrt{\Theta_{jj}}}\right]} \geq 1.
\end{equation*}
Thus, it suffices to show
$G^{-1}(\frac{\alpha s_0}{p})-\frac{\sqrt{n}\beta_{\min}}{\sqrt{\Theta_{jj}}}\rightarrow -\infty.$ Based on \eqref{power-3} and the assumption $\beta_{\min}$,
\begin{equation*}
G^{-1}(\frac{\alpha s_0}{p})- \frac{\sqrt{n}\beta_{\min}}{\sqrt{\Theta_{jj}}} < \sqrt{2\log\left(2p/(\alpha s_0)\right)}-\frac{\sqrt{n}\beta_{\min}}{\sqrt{\Theta_{jj}}}
\rightarrow -\infty.
\end{equation*}
So the proof is completed.
\end{proof}

\subsection{Proof of Lemma \ref{power-lemma}}\label{subsubsec:proofoftildetlemma}
\begin{proof}
First, we utilize Lemma \ref{Gt1} and obtain that
\begin{equation*}
G(\sqrt{2\log (p/s_0)}) \le \sqrt{\frac{2}{\pi}}\cdot \frac{e^{-\log (p/s_0)}}{\sqrt{2\log (p/s_0)}} = \frac{s_0}{p\sqrt{\pi \log (p/s_0)}} \le \frac{\alpha s_0}{p}(1-o(1)).
\end{equation*}
As $G(\tilde{t}) = \frac{\alpha s_0}{p}(1-o(1))$ and $G(t)$ is strictly monotonically decreasing, $\tilde{t} \leq \sqrt{2\log(p/s_0)}.$

If $t_0 > \tilde{t}$, we know from the definition of $t_0$ in GMT procedure that
\begin{equation}\label{power-e3}
\frac{pG(\tilde{t})}{\max \big\{\sum_{j=1}^p \mathbbm{1}\{ |T_{j}| \ge \tilde{t}\},1   \big\}} > \alpha.
\end{equation}
On the other hand, considering the definition of \eqref{power-e4}, on event $\mathcal{D}(\varepsilon,\delta)$, for $j \in S$, we have
\begin{equation*}
\frac{\sqrt{n} |\beta^0_j|}{\sqrt{\Theta_{jj}+\varepsilon}}
-\delta\le |T_j-\check{T}_j|\le \frac{\sqrt{n} |\beta^0_j|}{\sqrt{\Theta_{jj}-\varepsilon}}
+\delta.
\end{equation*}
And
\begin{equation*}
\begin{split}
&\mP\left(|T_j|< \tilde{t} \right) \\
< &\max\left\{\mP\left(\check{T}_j < -\frac{\sqrt{n} |\beta^0_j|}{\sqrt{\Theta_{jj}+\varepsilon}}
+\delta+\tilde{t}\right), \mP\left(\check{T}_j> \frac{\sqrt{n} |\beta^0_j|}{\sqrt{\Theta_{jj}+\varepsilon}}
-\delta-\tilde{t}\right)\right\}+
P(\mathcal{D}^c(\varepsilon, \delta)).
\end{split}
\end{equation*}
Because of $|\beta_j^0|>\sqrt{\frac{8\Theta_{jj}\log(p/s_0)}{n}}$ and Remark under the Condition (C8), we know that we can take sufficiently small $\delta$ and $\varepsilon$ so that
$$\frac{\sqrt{n} |\beta^0_j|}{\sqrt{\Theta_{jj}+\varepsilon}}
-\delta> 2\sqrt{2\log(p/s_0)} \ge 2 \tilde{t}.$$
So from Lemma 6.1 of \cite{liu2013gaussian} we know that for $j\in S$, we have
\begin{equation*}
\mP\left(|T_j|< \tilde{t} \right)< G(\tilde{t})\left(1 / 2 +o(1) \right) + \mP(\mathcal{D}^c(\varepsilon,\delta))<\frac{\alpha s_0}{2 p}(1 + o(1)) + \mP(\mathcal{D}^c(\varepsilon,\delta)).
\end{equation*}
We take a deterministic series $a_p=\mP\left(|T_j|< \tilde{t} \right)$. Utilizing \eqref{power-1}, we know that $a_p \rightarrow 0$. We can take another series $b_p \rightarrow \infty$, so that $a_p b_p \rightarrow 0.$ By Markov's inequality, we know that
\begin{equation*}
\mP\left(\sum_{j\in S} \mathbbm{1}\{ |T_{j}| < \tilde{t}\} >s_0a_pb_p\right)\le
\frac{\mE\sum_{j\in S} \mathbbm{1}\{ |T_{j}| < \tilde{t}\}}{s_0a_pb_p}
=\frac{1}{b_p}.
\end{equation*}
So $\sum_{j\in S} \mathbbm{1}\{ |T_{j}| < \tilde{t}\} \le s_0 a_p b_p$ holds with high probability. Hence, with high probability,
\begin{equation}\label{power-e10}
\begin{split}
\sum_{j=1}^p \mathbbm{1}\{ |T_{j}| \ge \tilde{t}\} \ge \sum_{j\in S} \mathbbm{1}\{ |T_{j}| \ge \tilde{t}\}=s_0-\sum_{j\in S} \mathbbm{1}\{ |T_{j}| < \tilde{t}\}\ge s_0(1-a_p b_p).
\end{split}
\end{equation}
According to \eqref{power-e3}, we know that
\begin{equation*}
\sum_{j=1}^p \mathbbm{1}\{ |T_{j}| \ge \tilde{t}\}<\frac{pG(\tilde{t})}{\alpha}.
\end{equation*}
If we take $\tilde{t}=G^{-1}\left( \frac{\alpha s_0}{p}(1-2a_p b_p)\right)$, then $\sum_{j=1}^p \mathbbm{1}\{ |T_{j}| \ge \tilde{t}\} < s_0(1-a_p b_p)$. This conflicts with \eqref{power-e10}! As a result, $t_0 \leq t$ with high probability.
\end{proof}

\section{Proof of the Results of the Two-Sample Problem}
Similar to \eqref{standard2}, for the two-sample problem, for $j \in [p]$, we define three statistics
\begin{equation*}
\begin{split}
M'_j =&\frac{\left(\widehat{\beta}^{(1)}_j-\widehat{\beta}^{(2)}_j\right)
-\left(\beta^{(1)}_j-\beta^{(2)}_j\right)}
{\sqrt{\widehat{\Theta}_{jj}^{(1)}/n_1+\widehat{\Theta}_{jj}^{(2)}/n_2}},\\
\widetilde{M}_j=&\frac{\frac{1}{n_1}\sum_{m=1}^{n_1} (\bTheta^{(1)}_j)^T x^{(1)}_m\left\{y^{(1)}_m-\dot{b}
\left((x^{(1)}_m)^T\boldsymbol{\beta}^{(1)}\right)\right\}-
\frac{1}{n_2}\sum_{m=1}^{n_2} (\bTheta^{(2)}_j)^T x^{(2)}_m\left\{y^{(2)}_m-\dot{b}
\left((x^{(2)}_m)^T\boldsymbol{\beta}^{(2)}\right)\right\}}
{\sqrt{\widehat{\Theta}_{jj}^{(1)}/n_1+\widehat{\Theta}_{jj}^{(2)}/n_2}},\\
\check{M}_j=&\frac{\frac{1}{n_1}\sum_{m=1}^{n_1} (\bTheta^{(1)}_j)^T x^{(1)}_m\left\{y^{(1)}_m-\dot{b}
\left((x^{(1)}_m)^T\boldsymbol{\beta}^{(1)}\right)\right\}-
\frac{1}{n_2}\sum_{m=1}^{n_2} (\bTheta^{(2)}_j)^T x^{(2)}_m\left\{y^{(2)}_m-\dot{b}
\left((x^{(2)}_m)^T\boldsymbol{\beta}^{(2)}\right)\right\}}
{\sqrt{\Theta_{jj}^{(1)}/n_1+\Theta_{jj}^{(2)}/n_2}}.
\end{split}
\end{equation*}
Assuming that the conditions of Lemma \ref{cailemma7} hold for both sets of samples and repeating the proof of Lemma \ref{cailemma7}, we can also prove
\begin{equation}\label{MMdif}
\|M'_j-\widetilde{M}_j\|_{\infty}=o_p\left(\frac{1}{\sqrt{\log p}}\right),
\|\widetilde{M}_j-\check{M}_j\|_{\infty}=o_p\left(\frac{1}{\sqrt{\log p}}\right).
\end{equation}
Similar to \eqref{checkTmoment}, we know that
\begin{equation}\label{checkT2}
\mE \check{M}_j=0,\mE \check{M}_j^2=1.
\end{equation}
And for $1\leq j<k\leq p$,
\begin{equation}\label{checkT3}
\mE \check{M}_j\check{M}_k=\frac{\Theta^{(1)}_{jk}/n_1+\Theta^{(2)}_{jk}/n_2}
{\sqrt{\Theta_{jj}^{(1)}/n_1+\Theta_{jj}^{(2)}/n_2}\cdot
\sqrt{\Theta_{kk}^{(1)}/n_1+\Theta_{kk}^{(2)}/n_2}}.
\end{equation}

Therewith, we know that $\mE \check{M}_j\check{M}_k=\widetilde{\Theta}_{jk}^0$ by \eqref{checkT3}. Consider \eqref{thm2-1} and \eqref{thm2-4} in the proof of Theorem \ref{main-thm2}. Actually, if we replace $S_{\le 0}$ and $S_{\ge 0}$ in \eqref{thm2-1} and \eqref{thm2-4} with $\widetilde{S}_{\le 0},\widetilde{S}_{\ge 0}$ in the two-sample problem, the inequality still holds. Then we replace $T'_j, \widetilde{T}_j$ and $\check{T}_j$ with $M'_j, \widetilde{M}_j$ and $\check{M}_j$. Compare \eqref{MMdif},\eqref{checkT2} and \eqref{checkT3} with Lemma \ref{cailemma7} and \eqref{checkTmoment}, and we know that the properties of $M'_j,\widetilde{M}_j$ and $\check{M}_j$ are exactly the same as that of $T'_j,\widetilde{T}_j$ and $\check{T}_j$. So, in order to prove Theorem \ref{twosample-thm1} and Theorem \ref {twosample-thm2}, we just need to repeat the proof of Theorem \ref{main-thm2}, Proposition \ref{GGt} and Theorem \ref{main-thm3}. The specific details will not be addressed.

\end{document}